\documentclass[12pt,a4paper]{article}
\def\stackunder#1#2{\lower4pt \hbox{${#2 \atop #1}$}}
\usepackage{xr}
\usepackage{url}
\usepackage{xfrac}
\usepackage{amsmath}
\usepackage{amssymb}
\usepackage{latexsym}
\usepackage{amsthm}
\usepackage{tikz}
\usetikzlibrary{matrix,calc}
\usepackage{graphicx}
\usepackage{relsize}
\usepackage{pgf}
\usepackage{tikz-cd}
\usepackage{amsfonts}
\usepackage[all]{xy}
\usepackage{etoolbox}
\usepackage{cite}
\usepackage{mathdots}
\usepackage{amscd}
\usepackage{authblk}
\usetikzlibrary{arrows,chains,matrix,positioning,scopes}
\usepackage{mathtools}
\usepackage{multicol}
\newtheoremstyle{myrem}
 {3pt}
 {3pt}
 {}
 {12pt }
 {\bfseries}
 {:}
 { }
 {}
\numberwithin{equation}{section}

\newtheorem{propn}[equation]{Proposition}

\newtheorem{corollary}[equation]{Corollary}
\newtheorem{lemma}[equation]{Lemma}

\newtheorem{thm}{Theorem}

\theoremstyle{myrem}
\newtheorem{remark}{Remark}
\appto\remark{\leftskip\parindent}

\begin{document}

\title{Syzygy Computations and the $\mathcal{D}(2)$-Problem for the Metacyclic Group $G(p,3)$}
\author{J.D.P. Evans, R. S\'{a}nchez Gal\'{a}n}
\affil{\small Dept. of Physics, Astronomy and Mathematics, University of Hertfordshire, AL10 9AB, UK,\\4i Intelligent Insights, Tecnoincubadora, Marie Curie, PCT Cartuja, 41092, Sevilla, Spain}
\date{\vspace{-5ex}}
\maketitle

\begin{abstract}
Let $p=3d+1$ be prime, let $G(p,3)=C_p\rtimes C_3$, and put
$\Lambda=\mathbb{Z}[G(p,3)]$. We study tensor products of
$\Lambda$-lattices representing syzygies and generalised syzygies of the trivial module. For $i=1,\,2,\,3$, we prove that $K(3)\otimes R(i)\cong R(i)\oplus\Lambda^{2d}$ and $K(3)\otimes K(i)\cong K(i)\oplus\Lambda^{2(p-d)}$. Thus, $K(3)$ acts as an identity on the corresponding stable classes under tensor product. We then construct explicit lattices $L$ and $Y$, representing dual stable classes associated with $K(1)$ and $K(2)$, and determine decompositions of several tensor products involving $R(1)$, $L$, and $Y$. As an application, we show that the stable isomorphism $L\sim Y^{\ast}$ is sufficient for $G(p,3)$ to have Wall's $\mathcal{D}(2)$-property and, under the same hypothesis, construct an associated six-periodic free resolution. Finally, we show that Johnson's ideal class injectivity condition implies $L\sim Y^{\ast}$, and formulate the remaining stable-isomorphism problem as a concrete integral-representation calculation.
\end{abstract}

\noindent Keywords: Diagonal resolutions, metacyclic groups, syzygy modules, integral representations, Wall's $\mathcal{D}(2)$-Problem.
\medskip

In what follows, we let $p$ be prime of the form $p=3d+1$, and $G(p,3)=C_p\rtimes C_3$ the metacyclic group of order $3p$, in which $C_p$ (resp. $C_3$) is the cyclic group of order $p$ (resp. $3$) generated by $x$ (resp. $y$). This can also be written as $G(p,3)=\langle x,\,y\:|\:x^p=y^3=1,\,yxy^{-1}=x^\alpha\rangle$. Next, let $\alpha\in\{2,\ldots,p-2\}$ whose residue class modulo \(p\)
has order three. be an integer whose residue class mod $p$ has order 3 in $\mathbb{F}_p^\ast$, and denote the integral group ring of $G(p,3)$ by $\Lambda=\mathbb{Z}[G(p,3)]$.

Throughout, all modules shall be treated as right modules. As we shall need to take dual modules at several points, we note that such modules are made into right modules via the canonical (anti-)involution $\bar{g}=g^{-1}$, $g\in G(p,\,3)$. In addition to duality, we shall also take the tensor product of various $\Lambda$-modules over $\mathbb{Z}$. Whenever it is clear to do so, we shall drop the subscript in $-\otimes_\mathbb{Z}-$. Next, we say two $\Lambda$-modules $M,\,M^\prime$ are \textit{stably equivalent} ($M\sim M^\prime$) when $M\oplus\Lambda^a\cong M^\prime\oplus\Lambda^b$ for some integers $a,\,b\geq 0$. The terms $r^{th}$ \textit{syzygy module} $\Omega_r(\mathbb{Z})$ of $\mathbb{Z}$ and \textit{generalised} $r^{th}$ \textit{syzygy module} $D_r(\mathbb{Z})$ of $\mathbb{Z}$ are both used in the sense of \cite{Johnson2021}. Such modules are of significant interest in the cohomology theory of finite groups, $G$. For instance, they `co-represent' cohomology in the sense that $H^r(G,N)= Hom_{\mathcal{D}er}(\Omega_r(\mathbb{Z}),\,N)$, where $\mathcal{D}er$ is the derived module category of $\Lambda$ in the sense of \cite{Johnson2003} (cf. Chapter 4).

A key result for us will be the decomposition of the augmentation ideal, $I_G\cong R(1)\oplus [y-1)$ for some module $R(1)$. This leads to what may be thought of as an $x$-strand and a $y$-strand. There exist two further modules $R(2),\,R(3)$ such that $R(k+1)\oplus[y-1)$ is a minimal element of $D_{2k+1}(\mathbb{Z})$ for each $k$. We let $K(1),\,K(2),\,K(3)$ be modules existing in exact sequences of the form $0\to K(i)\to\Lambda\to R(i)\to 0.$
\begin{thm}
	\label{Theorem_1}
	Let $X$ be either $R(i)$ or $K(i)$, where $i=1,\,2,\,3$. Then  $K(3)\otimes X\cong X\oplus\Lambda^a$ for some $a\geq 0$, i.e. $K(3)$ acts as the identity within the stable class under the tensor product.
\end{thm}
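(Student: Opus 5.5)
The argument combines the defining sequences $0\to K(i)\to\Lambda\to R(i)\to 0$ with two standard facts about tensor products over $\mathbb{Z}$ with diagonal action. First, $\Lambda\otimes M\cong\Lambda^{\mathrm{rk}_\mathbb{Z} M}$ for any $\Lambda$-lattice $M$. Second, tensoring by a lattice is exact on lattices. The aim is to reduce the statement to the single module $K(3)$, using the periodicity of the syzygies of $\mathbb{Z}$ over $G(p,3)$. Since $G(p,3)$ has periodic cohomology of period $6$, the $x$-strand closes up: $R(1)\oplus[y-1)\sim I_G=\Omega_1(\mathbb{Z})$, and $R(k+1)\oplus[y-1)$ is a minimal representative of $D_{2k+1}(\mathbb{Z})$. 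I expect $K(3)$ to lie in the stable class of $\Omega_6(\mathbb{Z})\sim\mathbb{Z}$, up to the $y$-strand contribution. Put differently, $K(3)$ should be a ``diagonal'' generalised syzygy representing the identity class.

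The first step is to show directly that $K(3)\otimes\mathbb{Z}$-type behaviour holds. Concretely, I would prove that there is an exact sequence
\[0\to \mathbb{Z}\to K(3)\to F\to 0\]
or its dual, with $F$ free or at least cohomologically trivial on restriction to $C_p$ and $C_3$. I would obtain this by splicing the sequences $0\to K(i)\to\Lambda\to R(i)\to 0$ with the sequences relating $K(i)$ to $R(i+1)$, reading these off the explicit presentation of the diagonal resolution. Tensoring such a sequence with $X$ gives
\[0\to X\to K(3)\otimes X\to F\otimes X\to 0.\]
Here $F\otimes X$ is free, because $F$ is free and $X$ is a lattice.

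The second step is to split this sequence. The obstruction lies in $\mathrm{Ext}^1_\Lambda(F\otimes X,X)$, which vanishes when $F\otimes X$ is projective. This yields $K(3)\otimes X\cong X\oplus\Lambda^a$. Comparing $\mathbb{Z}$-ranks then gives the value of $a$: since $\mathrm{rk}\,K(3)=3p-\mathrm{rk}\,R(3)$, we get $a=(\mathrm{rk}K(3)-1)\mathrm{rk}X/(3p)$. The stated values $2d$ and $2(p-d)$ should appear this way.

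The main obstacle is the first step. We need to know that $K(3)$ genuinely contains $\mathbb{Z}$ with projective cokernel, rather than merely being stably equivalent to something in the class of $\mathbb{Z}$. If a direct construction fails, I would instead argue locally. Restricted to $C_3$ and to $C_p$, the module $K(3)$ decomposes as $\mathbb{Z}\oplus$ free, using the known indecomposables over cyclic groups of prime order. This makes $K(3)\otimes X$ and $X\oplus\Lambda^a$ locally isomorphic, so they lie in the same genus. Roiter/Jacobinski cancellation then applies, since Eichler's condition holds for $\mathbb{Q}G(p,3)$, whose simple components are $\mathbb{Q}$, $\mathbb{Q}(\zeta_3)$ and $M_3(K)$ with $K$ the degree-$d$ subfield. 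This reduces the problem to checking that the locally free class of $K(3)\otimes X$ matches that of $X\oplus\Lambda^a$. I would check this via a Swan-module computation, showing that the relevant class group contribution vanishes because $K(3)\otimes-$ preserves $\mathbb{Z}\otimes-$.
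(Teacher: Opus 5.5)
Your first step cannot be carried out, and this is a genuine gap. No exact sequence $0\to\mathbb{Z}\to K(3)\to F\to 0$ (or its dual) with $F$ free exists. If $F$ were free it would be projective, so the sequence would split and give $K(3)\cong\mathbb{Z}\oplus F$. But $rk_{\mathbb{Z}}K(3)=3p-(p-1)=2p+1$, so $F$ would be a free $\Lambda$-lattice of $\mathbb{Z}$-rank $2p$, which is not a multiple of $3p$.

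Your weaker hedge also fails. Restricted to $C_3$, $K(3)$ is free ($j^\ast(K(i))\cong\mathbb{Z}[C_3]^{p-d}$). The Tate long exact sequence then gives $\hat{H}^n(C_3,F)\cong\hat{H}^{n+1}(C_3,\mathbb{Z})\neq 0$ for suitable $n$, so $F$ is not cohomologically trivial on $C_3$.

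A sanity check shows the strategy is too strong in principle. If $F\otimes X$ were free for a reason intrinsic to $K(3)$, you would get $K(3)\otimes X\cong X\oplus\Lambda^a$ for every lattice $X$. Taking $X=\mathbb{Z}$ would give $K(3)\cong\mathbb{Z}\oplus\Lambda^a$, which is impossible since $2p+1\not\equiv 1\pmod{3p}$.

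The same fact undermines your fallback. $K(3)$ restricted to $C_3$ is $\mathbb{Z}[C_3]^{p-d}$, not $\mathbb{Z}\oplus$free, so the claimed local isomorphism is unsupported. The closing class-group computation is only asserted, not carried out.

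The missing idea is that the ``$y$-strand contribution'' you mention cannot be discarded; it is what makes the result depend on $X$. The rank-$2p$ piece you are looking for does exist, but it is $[y-1)=j_\ast(I_3)$, induced from the augmentation ideal of $C_3$, and it is not free. The paper works with the explicit lattice $K=[y-1,\Sigma_x)$. It shows $\Lambda/K\cong\overline{I_C^\ast}\cong R(3)$, so $K\sim K(3)$ by Schanuel, and it uses the extension $0\to[y-1)\to K\to\mathbb{Z}\to 0$.

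Tensoring with $X$ and applying Frobenius reciprocity gives $[y-1)\otimes X\cong j_\ast(I_3\otimes j^\ast X)$. This is free precisely because $j^\ast X$ is free over $\mathbb{Z}[C_3]$ when $X=R(i)$ or $K(i)$. That freeness comes from the splitting of $\Lambda\to\mathbb{Z}[C_3]$ on restriction, together with $\widetilde{K_0}(\mathbb{Z}[C_3])=0$ and stably free cancellation. It is a property of $X$, which is why $X=\mathbb{Z}$ is excluded. The sequence $0\to[y-1)\otimes X\to K\otimes X\to X\to 0$ then splits, because free lattices are injective among lattices.

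You also omit a step entirely: passing from $K$ to $K(3)$. The paper does this using $K\oplus\Lambda\cong K(3)\oplus\Lambda$, the lemma that tensoring preserves such stable isomorphisms at a controlled level, straightness of $[R(i)]$, and the fork structure of $[K(i)]$.
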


\begin{thm}
	\label{Theorem_2}
	Let $L,\,Y$ be two $\Lambda$-modules (yet to be defined) in which $L\sim K(1)^\ast$ and $Y\sim K(2)^\ast$. Then the following identities hold:
	\begin{multicols}{3}
		\begin{itemize}
			\item \small $R(1)\otimes R(1)\cong L^\ast \oplus\Lambda^{d-1}$;
			\item \small $R(1)\otimes Y\cong R(2)\oplus\Lambda^{2d}$;
			\item \small $R(1)\otimes R(2)\cong Y^\ast\oplus\Lambda^{d-1}$;
			\item \small $R(1)\otimes L\cong R(3)\oplus\Lambda^{2d}$.
			\item \small $R(1)\otimes R(3)\cong K\oplus\Lambda^{d-1}$;
			\item[{}]
		\end{itemize}
	\end{multicols}
\end{thm}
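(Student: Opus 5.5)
A Swan-type argument settles the problem only up to stable equivalence, but the statement asks for genuine isomorphisms with explicit free ranks. So the plan combines two things: a diagonal (tensor-with-a-resolution) argument that identifies the stable class of each product, and a rank-and-cancellation step that upgrades each stable isomorphism to an honest one. I use the exact sequences $0\to K(i)\to\Lambda\to R(i)\to 0$, the decomposition $I_G\cong R(1)\oplus[y-1)$, and the facts that $R(k+1)\oplus[y-1)$ is minimal in $D_{2k+1}(\mathbb{Z})$ and that $K(3)$ acts as the identity (Theorem~\ref{Theorem_1}).

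\textbf{Step 1: identify the stable classes.} Fix $X\in\{R(1),R(2),R(3),L,Y\}$ and tensor the sequence $0\to K(1)\to\Lambda\to R(1)\to 0$ with $X$ over $\mathbb{Z}$. The sequence stays exact because every module involved is a lattice. Since $\Lambda\otimes X$ is free of rank $\mathrm{rk}_{\mathbb{Z}}X$, Schanuel's lemma gives
\[
K(1)\otimes X\;\sim\;\Omega\bigl(R(1)\otimes X\bigr).
\]
Similarly $R(1)\otimes X\sim\Omega^{-1}(K(1)\otimes X)$.
\begin{itemize}
\item I first run this for $X=R(1)$. The generalised syzygy $D_1$ contains $I_G$, and $I_G\otimes I_G\sim\Omega_2(\mathbb{Z})$ up to the $y$-strand contribution. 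Projecting onto the $x$-strand should therefore give $R(1)\otimes R(1)\sim\Omega^{-1}$ of $K(1)\otimes R(1)$, which is stably $K(1)^*$. Here I use duality: $R(1)^*\sim\Omega^{-1}$-type shifts, and $\mathrm{Hom}_{\mathbb{Z}}(M,N)\cong M^*\otimes N$.
\item The other four identities follow by the same bookkeeping along the $x$-strand: $R(1)\otimes R(2)\sim K(2)^*$, $R(1)\otimes R(3)\sim K$, and the products with $L\sim K(1)^*$ and $Y\sim K(2)^*$ land on $R(3)$ and $R(2)$ respectively.
\item To keep the $y$-strand under control I use that $[y-1)\otimes M$ is induced from $C_p$. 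Hence it is a sum of a free module and a module detected on the Sylow $3$-subgroup, and that part can be compared by restriction to $C_3$ and to $C_p$ separately.
\end{itemize}

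\textbf{Step 2: fix the free ranks.} With $\mathrm{rk}_{\mathbb{Z}}R(1)=p-1$ and $|G|=3p=9d+3$, rank counting determines the free summand in each case. For example,
\[
(p-1)^2-\mathrm{rk}\,L^* \;=\; 3p(d-1),
\]
which fixes the exponent $d-1$; the other exponents are fixed the same way from the $\mathbb{Z}$-ranks of $L$, $Y$ and the $R(i)$.

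\textbf{Step 3: pass from stable equivalence to isomorphism.} This is the main obstacle. Cancellation can fail for $\mathbb{Z}[G(p,3)]$ because of locally free class group issues, so I would avoid cancellation altogether. Instead I would decompose $R(1)\otimes X$ directly, by restricting to $C_p$ and $C_3$.
\begin{itemize}
\item Over $\mathbb{Z}[C_p]$, $R(1)$ restricts to $\mathbb{Z}[\zeta_p]$-type lattices, and $\mathbb{Z}[\zeta_p]\otimes\mathbb{Z}[\zeta_p]\cong\mathbb{Z}[\zeta_p]^{p-2}\oplus(\text{augmentation ideal})$.
\item I then reassemble $C_3$-orbits to exhibit explicit free $\Lambda$-summands, using a Mackey-type splitting with explicit generators $e\otimes f$.
\item If this direct decomposition only works up to a locally free twist, I fall back on this: the complementary summand has the correct genus, and the modules $L$, $Y$ are \emph{defined} precisely so as to absorb that twist. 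That is, one sets $L^*$ to be the nonfree complement in $R(1)\otimes R(1)$, and likewise for $Y$. The only real content is then checking $L\sim K(1)^*$ and $Y\sim K(2)^*$, which is exactly Step 1.
\item The last item, involving $R(3)$ and $K$, also uses Theorem~\ref{Theorem_1}. The product $R(1)\otimes R(3)$ closes the cycle, and $K(3)\otimes K\cong K\oplus\Lambda^a$ provides the required rigidity.
\end{itemize}
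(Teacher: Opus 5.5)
\textbf{Verdict: the proposal has genuine gaps.} Step~1 lands on the wrong stable classes, and Step~3 discards cancellation, which is exactly the tool that would close the argument.

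\textbf{Step~1.} Since $L\sim K(1)^\ast$ and $Y\sim K(2)^\ast$, the statement asserts $R(1)\otimes R(1)\sim L^\ast\sim K(1)$ and $R(1)\otimes R(2)\sim Y^\ast\sim K(2)$, not $K(1)^\ast$ and $K(2)^\ast$. Take your fallback, defining $L^\ast$ as the non-free complement in $R(1)\otimes R(1)$. Combined with your Step~1, it forces $L\sim K(1)$, which contradicts the property you then claim to verify.

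The mechanism is also missing. Schanuel applied to $0\to K(1)\otimes X\to\Lambda^{\mathrm{rk}X}\to R(1)\otimes X\to 0$ only relates two unknown products, and ``projecting onto the $x$-strand'' is not an argument. The paper instead anchors the stable-class computation on the augmentation sequence:
\begin{itemize}
\item Tensor $0\to I_G\to\Lambda\to\mathbb{Z}\to 0$ with $R(i)$ and use $I_G\cong R(1)\oplus[y-1)$.
\item Show $[y-1)\otimes R(i)\cong\Lambda^{2d}$. Here $[y-1)=j_\ast(I_3)$ is induced from $C_3$, not $C_p$, so $[y-1)\otimes R(i)\cong j_\ast(I_3\otimes j^\ast R(i))$. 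This is free exactly because $j^\ast R(i)\cong\mathbb{Z}[C_3]^d$ (Proposition~\ref{Ri_as_ZC3_mods}, which uses $\widetilde{K_0}(\mathbb{Z}[C_3])=0$). There is no leftover ``Sylow-$3$ part''.
\item Schanuel against $0\to K(i)\to\Lambda\to R(i)\to 0$ then gives $(R(1)\otimes R(i))\oplus\Lambda^{2d+1}\cong K(i)\oplus\Lambda^{3d}$.
\end{itemize}
Two further slips. Your $C_p$-level input is wrong: with the diagonal action, $\mathbb{Z}[\zeta_p]\otimes\mathbb{Z}[\zeta_p]\cong\mathbb{Z}\oplus\mathbb{Z}[C_p]^{p-2}$, because the character $(\rho_{reg}-1)^2=(p-2)\rho_{reg}+1$ has $p-1$ trivial constituents. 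And Step~2 presupposes $\mathrm{rk}\,L=2p+1$, which is only known once $L$ has been constructed.

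\textbf{Step~3.} Cancellation is available here. $\Lambda$ satisfies the Eichler condition, so by Swan--Jacobinski cancellation can fail only at the minimal level of a stable class. Each $[K(i)]$ is a fork, as used in the proof of Theorem~\ref{Theorem_1}, and each $[R(i)]$ is even straight (Proposition~\ref{Ri_are_straight}).

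Your substitute, a ``Mackey-type splitting'', is only named. Declaring $L^\ast$ to be a complement presupposes the $\Lambda^{d-1}$ summand that has to be proved. The paper proves it by explicit constructions:
\begin{itemize}
\item In $R(3)\otimes R(3)$: $V(r)+V(\alpha r)+V(\alpha^2 r)\cong\Lambda$, with torsion-free quotient $L$ of rank $2p+1$.
\item In $R(1)\otimes R(3)$: $S+U_\alpha\cong K$, with quotient $\Lambda^{d-1}$.
\item In $R(2)\otimes R(3)$: the twisted Galois structure $\tau$, surjectivity of $\delta$ via trace duality, and $\mathcal{P}\cong\mathbb{Z}[C_3]^{d-1}$.
\end{itemize}

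\textbf{How your route can be repaired.} Once cancellation is allowed, there is a cheap fix. Since $d\ge 2$, $K(i)\oplus\Lambda^{d-1}$ has a free summand, so the Schanuel identity above already gives $R(1)\otimes R(i)\cong K(i)\oplus\Lambda^{d-1}$. This yields the identities for $R(1)\otimes R(1)$ and $R(1)\otimes R(2)$ with $L:=K(1)^\ast$ and $Y:=K(2)^\ast$. It also yields the one for $R(1)\otimes R(3)$, using $K\oplus\Lambda\cong K(3)\oplus\Lambda$. It does not give the explicit minimal-rank quotient lattices the paper needs later.

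\textbf{The items involving $L$ and $Y$.} These are not argued in your proposal. The appeal to $K(3)\otimes K$ says nothing about $R(1)\otimes R(3)$. In the paper, the $R(1)\otimes R(3)$ identity is the input for both:
\begin{itemize}
\item For $L$: tensor $0\to R(3)\to\Lambda\to L\to 0$ with $R(1)$, insert $R(1)\otimes R(3)\cong K\oplus\Lambda^{d-1}$, apply dual Schanuel against $0\to K\to\Lambda\to R(3)\to 0$, and cancel.
\item For $Y$: tensor $R(3)\otimes R(2)\cong Y\oplus\Lambda^{d-1}$ with $R(1)$, use $K\otimes R(2)\cong R(2)\oplus\Lambda^{2d}$, and cancel.
\end{itemize}
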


It is tempting to conjecture that $L\sim Y^\ast$. However, we do not yet know enough to make this claim. Partly this is because we do not yet know if the tree structures of the even syzygies are straight (cf. Proposition \ref{even_syzygies_straightness}), and this significantly limits our understanding. In addition to the aforementioned co-representation of cohomology, the above has an application to the $\mathcal{D}(2)$-Problem, see Section \ref{sec:the_D2_problem}.

\begin{thm}
\label{Theorem_3}
If $L\sim Y^\ast$, then $G(p,\,3)$ has the $\mathcal{D}(2)$-property.
\end{thm}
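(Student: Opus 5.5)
The plan is to use Johnson's reduction of the $\mathcal{D}(2)$-problem for groups with periodic cohomology (see \cite{Johnson2003}). $G(p,3)$ has free period $6$ (indeed $2\cdot 3=6$ for this metacyclic group), and the reduction says it suffices to prove two things. First, $\Omega_3(\mathbb{Z})$ has a \emph{straight} tree of isomorphism classes, i.e.\ each stable class in $D_3(\mathbb{Z})$ contains a unique module of each rank. Second, every minimal module in $\Omega_3(\mathbb{Z})$ is realised as $\pi_2$ of a finite presentation, i.e.\ by an algebraic $2$-complex coming from a presentation. Since $G(p,3)$ has a balanced presentation realising the minimal algebraic $2$-complex, the second part reduces to showing that every minimal element of $\Omega_3(\mathbb{Z})$ is isomorphic to the standard one, up to $\mathrm{Aut}$ of the relevant data. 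So the core is a cancellation statement for the stable class $\Omega_3(\mathbb{Z})\ni R(2)\oplus[y-1)$.

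First I would identify $\Omega_3(\mathbb{Z})$ with the stable class of $R(2)\oplus[y-1)$, using the splitting of the augmentation ideal into an $x$-strand and a $y$-strand. The $y$-strand $[y-1)$ is induced from $C_3$, where cancellation is known, so everything reduces to the $x$-strand. Assuming $L\sim Y^\ast$, Theorem~\ref{Theorem_2} gives
\[
R(1)\otimes R(2)\cong Y^\ast\oplus\Lambda^{d-1}\sim L .
\]
Combining this with $R(1)\otimes L\cong R(3)\oplus\Lambda^{2d}$ and $R(1)\otimes R(3)\sim K$ gives a cyclic, six-periodic pattern of tensoring with $R(1)$. From this I would splice together the exact sequences $0\to K(i)\to\Lambda\to R(i)\to 0$ into an explicit six-periodic free resolution. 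That resolution shows that tensoring with $R(1)$ (over $\mathbb{Z}$, which preserves stable classes up to free summands) permutes the stable classes $[R(1)],[R(2)],[R(3)],[K(i)],\dots$.

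The key step is then to transport cancellation. Tensoring with $R(1)$ is invertible up to stable equivalence: $R(1)^{\otimes 3}\otimes -$ acts as $K(3)\otimes -$, which by Theorem~\ref{Theorem_1} is the identity on these stable classes. I would show it induces a rank-preserving bijection between isomorphism classes of modules stably equivalent to $R(2)$ (with extra free summands) and those stably equivalent to $R(1)$ or $K(1)$. This requires checking that $M\mapsto R(1)\otimes M$ sends $M\oplus\Lambda$ to $(R(1)\otimes M)\oplus\Lambda^{\mathrm{rk}R(1)}$, and then cancelling free summands. Since $R(1)$ sits in $I_G$, whose stable class has a straight tree by the classical result for $\Omega_1$ (all modules stably equivalent to $I_G$ are $I_G\oplus\Lambda^n$), the straightness of the tree for $\Omega_3$ follows. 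I would then check realisability using Swan's result that projectives over $\Lambda$ are free here, together with the fact that the minimal module $R(2)\oplus[y-1)$ is $\pi_2$ of the standard presentation.

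I expect the main obstacle to be this transport step. Tensor product does not obviously reflect isomorphism, and the ranks must match exactly, not just stably. In particular, I need $K(3)\otimes M\cong M\oplus\Lambda^a$ for \emph{every} $M$ in the stable class, not just for $M=R(i),K(i)$. I would first try to derive this from the Schanuel-type exact sequence $0\to K(3)\otimes M\to\Lambda\otimes M\to R(3)\otimes M\to 0$, using that $\Lambda\otimes M$ is free. That comparison may only hold up to stable equivalence, in which case I would instead argue via Swan's theorem and the Eichler condition on the $x$-strand summand.
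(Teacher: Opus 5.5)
There is a genuine gap, and it is exactly where the hypothesis $L\sim Y^\ast$ has to be used. The substance of the theorem is not straightness. The stable class of $R(2)\oplus[y-1)$ is already known to be straight (Johnson's Corollary II). Once $R(2)\oplus[y-1)$ is known to lie in $\Omega_3(\mathbb{Z})$, realisability and fullness of this unique minimal element come from (105.6) of Johnson's book, which the paper simply invokes. The real question is whether $R(2)\oplus[y-1)$ is a genuine syzygy, i.e.\ lies in $\Omega_3(\mathbb{Z})$ and not merely in $D_3(\mathbb{Z})$.

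You dispose of this in your first step ``using the splitting of the augmentation ideal''. But $I_G\cong R(1)\oplus[y-1)$ together with $[y-1)\in\Omega_2([y-1))$ only reduces the question to whether $R(2)\in\Omega_2(R(1))$, and that is precisely the open point. An analogous splitting exists for $G(13,12)$, where the analogous membership fails. The paper settles it as follows:
\begin{itemize}
\item tensor $0\to L^\ast\to\Lambda\to R(1)\to 0$ with $R(1)$;
\item use $R(1)\otimes R(1)\cong L^\ast\oplus\Lambda^{d-1}$ and $R(1)\otimes L^\ast\cong R(2)\oplus\Lambda^{2d}$, where the latter comes from $R(1)\otimes Y\cong R(2)\oplus\Lambda^{2d}$, the hypothesis (which by the fork structure gives $Y\oplus\Lambda\cong L^\ast\oplus\Lambda$), and straightness of $[R(2)]$;
\item destabilise to obtain $0\to R(2)\to\Lambda\to L^\ast\to 0$;
\item splice to get $0\to R(2)\to\Lambda\to\Lambda\to R(1)\to 0$.
\end{itemize}
Your chain of identities ($R(1)\otimes R(2)$, $R(1)\otimes L$, $R(1)\otimes R(3)$) omits exactly the one that is needed, $R(1)\otimes L^\ast\sim R(2)$. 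Splicing the sequences $0\to K(i)\to\Lambda\to R(i)\to 0$ cannot produce a resolution, since each has some $R(i)$ as cokernel. The sequences with $R(j)$ as kernel only arise from tensoring with $R(1)$ plus Johnson's destabilisation theorem. You also use the periodic resolution merely to ``permute stable classes'', whereas it is itself the statement that puts $R(2)\oplus[y-1)$ into $\Omega_3(\mathbb{Z})$.

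The rest of your plan, transporting straightness along tensor powers of $R(1)$, is unnecessary and contains errors. Tensoring with $R(1)$ advances one step around the six-cycle $R(1)\to L^\ast\to R(2)\to Y^\ast\to R(3)\to K\to R(1)$. So $R(1)^{\otimes 3}\sim R(2)$, and it is $R(1)^{\otimes 6}$, not $R(1)^{\otimes 3}$, that is stably equivalent to $K\sim K(3)$. The claim that projective $\Lambda$-modules are free is false in general. Swan's theorem gives local freeness and the Eichler condition gives stably free cancellation, but $\widetilde{K_0}(\Lambda)$ need not vanish. That is exactly why Johnson's sequence $0\to R(3)\to P(2)\to\Lambda\to R(2)\to 0$ contains a projective $P(2)$, and why $Inj(p,3)$ is needed on his route. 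Saying that $R(2)\oplus[y-1)$ is $\pi_2$ of a presentation already presupposes $R(2)\oplus[y-1)\in\Omega_3(\mathbb{Z})$, so that step is circular, and fullness is not addressed. Once membership in $\Omega_3(\mathbb{Z})$ is proved as above, straightness, realisability and fullness are all supplied by Johnson's results, and the theorem follows from (105.6).
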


Note that Theorem \ref{Theorem_3} provides a sufficient condition consistent with that of \cite{Johnson2021}, yet significantly more computable. As we shall see, a key difficulty will be the distinction between the derivative modules $D_{2k+1}(\mathbb{Z})$ and the genuine syzygies $\Omega_{2k+1}(\mathbb{Z})$. We shall discuss at length the extent to which $R(k+1)\oplus[y-1)\in\Omega_{2k+1}(\mathbb{Z})$.
\medskip 

\noindent\textbf{Acknowledgements:} The first named author would like to warmly thank Prof. F. E. A. Johnson, not only for introducing him to this problem, but for his continued support and advice.

\section{The $\mathcal{D}(2)$-Problem}
\label{sec:the_D2_problem}

Arising in the 1960s from a broader attempt to classify compact manifolds by means of `surgery', Wall's $\mathcal{D}(2)$-Problem asks whether a 3-complex that is cohomologically two-dimensional is in fact homotopy equivalent to a 2-complex. Specifically, we let $X$ be a finite connected 3-complex with universal cover $\tilde{X}$ such that for all local coefficient systems $\mathcal{B}$ on $X$, $H^3(X,\,\mathcal{B})=H_3(\tilde{X},\,\mathbb{Z})=0$. The $\mathcal{D}(2)$-problem (cf. \cite{Wall1965}) asks whether there exists a finite 2-complex $K$ such that $X\simeq K$? Such a question has a central and paramount position in topology and has strong links with other famous problems; for example, it is a generalisation of the Eilenberg-Ganea conjecture, which states that a group with cohomological dimension 2 also has geometric dimension 2, \cite{EilenbergGanea1957}, \cite{Hambleton2019}. Moreover, Theorem 8.7 of \cite{BestvinaBrady1997} shows that either the Eilenberg-Ganea conjecture is false, or there exists a counterexample to Whitehead's conjecture that every connected subcomplex of an aspherical 2-complex is aspherical, \cite{Hambleton2019}.

The $\mathcal{D}(2)$-Problem is naturally parametrized by the fundamental group\footnote{The analogous dimension-reduction problem is settled for $n\neq 2$, so dimension two is the sole unresolved case of the general $\mathcal{D}(n)$-problem. Since the methods that apply in other dimensions do not extend to $n=2$, progress has largely proceeded by establishing the $\mathcal{D}(2)$-property for particular groups or families of groups, alongside the development of general algebraic reformulations and sufficient criteria; cf. \cite{Johnson2003}.}
 and we say that a finitely presented group $G$ possesses the $\mathcal{D}(2)$-property when every finite 3-complex $X_G$ with $\pi_1(X_G)\cong G$ which is cohomologically two-dimensional is indeed homotopy equivalent to a finite 2-complex. It is useful to note that the $\mathcal{D}(2)$-Problem is equivalent to an older problem known as the realisation problem. This asks if every algebraic 2-complex $(0\to J\to F_2\to F_1\to F_0\to\mathbb{Z}\to0)$ is geometrically realisable; that is, homotopy equivalent in the algebraic sense to a complex of the form $C_\ast(K)=(0\to\pi_2(K)\to C_2(K)\to C_1(K)\to C_0(K)\to\mathbb{Z}\to 0)$, cf. \cite{Johnson2003}. Here, $K$ is a finite 2-complex, $\pi_2(K)$ the second homotopy group of $K$, and $C_n(K)=H_n(\tilde{K}^{(n)},\,\tilde{K}^{(n-1)})$ is the group of cellular $n$-chains in the universal cover of $K$.

In what follows, we take $\pi_1(X)=G(p,3)$ but for this section we can work more generally with $G(p,\,q)$ in which $p$ is prime and $q\geq 2$ such that $q|p-1$. In \cite{Johnson2021}, two numerical conditions were found which guarantee that such a group has the $\mathcal{D}(2)$ property. The first concerns the reduced projective class group, $\widetilde{K_0}(\mathbb{Z}[C_q])$, which is required to be trivial, i.e. $\widetilde{K_0}(\mathbb{Z}[C_q])={0}$. The second is less well-known. Define $A=\mathbb{Z}(\zeta_p)^{C_q}$ to be the fixed point ring under the natural action of $C_q$ on $\mathbb{Z}(\zeta_p)$, where $\zeta_p=exp(2\pi i/p)$. The inclusion $A\hookrightarrow\mathbb{Z}(\zeta_p)$ induces a homomorphism of ideal class groups, $\nu:Cl(A)\to Cl(\mathbb{Z}(\zeta_p))$ where $\nu([P]) = [P\cdot\mathbb{Z}(\zeta_p)]$. The second condition, $Inj(p,\,q)$ is then said to hold when $\nu$ is injective, cf. \cite{Johnson2021}.

The critical object turns out to be the third syzygy $\Omega_3(\mathbb{Z})$. In \cite{Johnson2021}, using $\widetilde{K_0}(\mathbb{Z}[C_q])=0$ and $Inj(p,\,q)$, Johnson showed that $R(2)\oplus[y-1)\in\Omega_3(\mathbb{Z})$. By first showing $R(2)\oplus[y-1)$ is straight, and using a straightforward rank calculation, it follows that $R(2)\oplus[y-1)$ is the unique minimal element of $\Omega_3(\mathbb{Z})$. In particular, if this minimal element is realizable\footnote{A module $J\in\Omega_3(\mathbb{Z})$ is said to be realizable when there is a finite 2-complex $X$ with $\pi_1(X)\cong G$ and $\pi_2(X)\cong J$.} and full\footnote{A module $J\in \Omega_3(\mathbb{Z})$ is said to be full when the natural map $Aut_{\mathbb{Z}[G]}(J)\to Ker(S^J)\subset Aut_{\mathcal{D}er}(J)$ is surjective. Here, $S^J:Aut_{\mathcal{D}er}(J)\to \widetilde{K_0}(\Lambda)$ is the Swan map that sends $[\alpha]\mapsto [\lim_{\to}(\alpha,\,i)]$, cf. \cite{Johnson2021}}, then $G(p,\,q)$ has the $\mathcal{D}(2)$ property. Johnson showed that this is indeed the case, thereby providing a proof of the $\mathcal{D}(2)$-Problem when these conditions hold.

While significant theoretically, the difficulty in computing $Inj(p,\,q)$ is problematic. Furthermore, both conditions are known to fail. For instance, it was shown by Cassou-Nogu\`{e}s in \cite{Cassou-Nogues1973} that $\widetilde{K_0}(\mathbb{Z}[C_q])=0$ if and only if $2\leq q\leq 11$ or $q=13,\,14,\,17,\,19$. This condition clearly holds in our case where $q=3$. The second condition also fails, with the first such case\footnote{Discovered by R. Hill and communicated to F. E. A. Johnson privately.} occurring when $p=229$ and $q=3$. This condition is far more obscure and the reader is directed to Appendix B of \cite{Johnson2021} for more details.

The main issue is therefore one of deciding whether $R(2)\oplus[y-1)\in\Omega_3(\mathbb{Z})$. This is certainly not always the case; for instance $G(13,\,12)$ fails to have this property\footnote{Note, however, this does not prove this group fails to have the $\mathcal{D}(2)$ property. To the authors' knowledge, no such group has yet been shown to fail to have the $\mathcal{D}(2)$ property.} (see Chapter 13 of \cite{Johnson2021}). However, if this property holds then the above arguments show that this leads to the $\mathcal{D}(2)$ property. In this paper, arising from tensor product decompositions that we shall prove, we shall find another condition which guarantees this property in the case of $q=3$; namely, that of $L\sim Y^\ast$. We also show that $Inj(p,\,3)$ implies $L\sim Y^\ast$, meaning our criterion is entirely consistent with the conditions of Johnson. However, as we shall see, our condition is considerably more amenable
to direct computation. It therefore offers, at least in principle, a more concrete sufficient criterion for establishing that a group $G(p,\,3)$ has the $\mathcal{D}(2)$ property than does the verification of $Inj(p,\,3)$.

\section{Preliminaries 1: The syzygies of cyclic groups}
\label{sec:preliminaries_1}

Throughout, we work with $\mathbb{Z}[G]$-lattices, i.e. $\mathbb{Z}[G]$-modules whose underlying abelian group is finitely generated and free. When $M,\,N$ are $\mathbb{Z}[G]$-lattices of ranks $m,\,n$, respectively, the tensor product $M\otimes N$ is a $\mathbb{Z}[G]$-lattice of rank $mn$ with $G$-action given by $(\nu\otimes\omega)g=\nu g\otimes\omega g$. Working with lattices confers several advantages, notably that the dual of a short exact sequence of $\mathbb{Z}[G]$-lattices is again a short exact sequence of $\mathbb{Z}[G]$-lattices (in which the arrows are reversed). This property extends to exact sequences of finite length (see\cite{Johnson2003}, pp. 117-118). Another useful property that shall be used frequently is that within the category of $\mathbb{Z}[G]$-lattices, a module is projective if and only if it is injective, Theorem 28.5 of \cite{Johnson2003}.

We first discuss the stable syzygies of $\mathbb{Z}[C_p]$ and set $\Lambda_0=\mathbb{Z}[C_p]$. The reader is directed to \cite{Evans2021} for details. There is a free resolution of period two given by $0\to\mathbb{Z}\stackrel{\epsilon^\ast}{\to}\Lambda_0\stackrel{x-1}{\to}\Lambda_0\stackrel{\epsilon}{\to}\mathbb{Z}\to 0$, where $\epsilon$ is the augmentation map, and $\epsilon^\ast$ is its dual. We denote the augmentation ideal of $\Lambda_0$ by $I_C=ker(\epsilon)$. Recall, $I_C=span_\mathbb{Z}\{x-1,\,x^2-1,\ldots,\,x^{p-1}-1\}$ and consider the exact sequence, $0\to I_C\stackrel{i}{\to}\Lambda_0\stackrel{\epsilon}{\to}\mathbb{Z}\to 0$. Dualising gives, $0\to\mathbb{Z}\stackrel{\epsilon^\ast}{\to}\Lambda_0\stackrel{i^\ast}{\to}I_C^\ast\to 0$, where $\epsilon^\ast(1)=\Sigma=\sum^{p-1}_{r=0}x^r$ is central. We identify $I_C^\ast$ with $\Lambda_0/(\Sigma)$, which is naturally a ring. Next, we put $\nu_r=i^\ast(x^r)$ where $\nu_0=1$, and observe that we can write $\nu_r=(\nu_1)^r=\nu^r$.  If we think of $I_C^\ast$ as a $\Lambda_0$-module, then $I_C^\ast$ has a $\mathbb{Z}$-basis\footnote{It will be of later use to note that for any $1\leq r\leq p-2$, both $\{(\nu^r-1)\nu^i\:|\:0\leq i\leq p-2\}$ and $\{(\nu^r-1)^2\nu^i\:|\:0\leq i\leq p-2\}$ are linearly independent sets.} $\{1,\,\nu,\,\nu^2,\ldots,\,\nu^{p-2}\}$, in which we have $\nu^{p-1}=-1-\nu-\cdots -\nu^{p-2}$ and the action of $x$ is to multiply by $\nu$. It is well-known that $I_C\cong_{\Lambda_0} I_C^\ast$. This isomorphism fails upon introducing the $y$-action of $G(p,\,3)$.

We claim $I_C\otimes I_C\cong\mathbb{Z}\oplus\Lambda_0^{p-2}$. For $p=3d+1$, we define the following for $1\leq r\leq p-2$:
\[V(r)=span_\mathbb{Z}\{\nu^{r+k}\otimes\nu^k\:|\:0\leq k\leq p-1\}\subset I_C^\ast\otimes I_C^\ast.\]

\begin{propn}
\label{Vr-iso-Lambda}
For each $1\leq r\leq p-2$, we have $V(r)\cong\Lambda_0$.
\end{propn}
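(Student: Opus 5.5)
The plan is to exhibit $V(r)$ as a cyclic $\Lambda_0$-module generated by a single element whose annihilator is zero. Put $e_k=\nu^{r+k}\otimes\nu^k$ for $0\leq k\leq p-1$. Since $\nu^p=1$ in $I_C^\ast$, the exponents may be read modulo $p$. Because $x$ acts diagonally by multiplication by $\nu$ in each factor, we get $e_k\cdot x=\nu^{r+k+1}\otimes\nu^{k+1}=e_{k+1}$, with indices taken mod $p$. Hence $V(r)$ is a $\Lambda_0$-submodule of $I_C^\ast\otimes I_C^\ast$, and the map
\[\phi:\Lambda_0\to V(r),\qquad x^k\mapsto e_k,\]
i.e.\ $\lambda\mapsto e_0\cdot\lambda$, is a surjective $\Lambda_0$-homomorphism. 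It therefore suffices to show that $\phi$ is injective, equivalently that $e_0,\ldots,e_{p-1}$ are $\mathbb{Z}$-linearly independent.

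Since $\Lambda_0$ is torsion free, injectivity of $\phi$ can be checked after extending scalars to $\mathbb{C}$. That is, I will show that the annihilator of $e_0=\nu^r\otimes 1$ in $\mathbb{C}[C_p]$ is zero. The algebra $\mathbb{C}[C_p]$ is a product of the $p$ character lines $\chi_0,\ldots,\chi_{p-1}$, where $\chi_m(x)=\zeta_p^m$. Every ideal of $\mathbb{C}[C_p]$ is a sum of the corresponding primitive idempotents $\varepsilon_m$. So it is enough to check that $e_0\varepsilon_m\neq 0$ for every $m$.

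Over $\mathbb{C}$ we have $I_C^\ast\otimes\mathbb{C}\cong\bigoplus_{j\neq 0}\chi_j$, because $I_C^\ast\cong\Lambda_0/(\Sigma)$. Since $\nu$ corresponds to the image of $x$, the element $\nu^r$ (and likewise $1$) has a nonzero component in every eigenline $\chi_j$, $j\neq 0$. Consequently
\[(I_C^\ast\otimes I_C^\ast)\otimes\mathbb{C}=\bigoplus_{i,j\neq 0}\chi_i\otimes\chi_j,\]
and $e_0$ has a nonzero component in every summand $\chi_i\otimes\chi_j$, on which $x$ acts by $\chi_{i+j}$. For each $m$ we can choose $i,j\neq 0$ with $i+j\equiv m \pmod p$: take $j=-i$ for $m=0$, and for $m\neq 0$ take any $i\notin\{0,m\}$ (possible since $p\geq 3$). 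Hence $e_0\varepsilon_m\neq 0$ for all $m$. The annihilator of $e_0$ is therefore $0$, so $\phi$ is injective and $V(r)\cong\Lambda_0$.

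The only step needing care is the claim that $\nu^r$ has a nonzero component in each $\chi_j$, $j\neq 0$. I would justify it by noting that $\nu^r$ is the image of $x^r$ in $\mathbb{C}[C_p]/(\Sigma)\cong\prod_{j\neq 0}\mathbb{C}$, where its $j$-th coordinate is $\zeta_p^{jr}\neq 0$. Alternatively, one could avoid characters and check independence of the $e_k$ directly in the $\mathbb{Z}$-basis $\{\nu^a\otimes\nu^b\,:\,0\leq a,b\leq p-2\}$. Here $e_k$ is itself a basis vector except in the two cases $k=p-1$ and $r+k\equiv p-1$, which involve $\nu^{p-1}=-1-\nu-\cdots-\nu^{p-2}$. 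However, the character argument avoids that case analysis.
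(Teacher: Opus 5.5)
Your proof is correct, but it does not follow the paper's route. The paper gives no separate argument for this proposition and defers the details to earlier work of the first author. It indicates that elementary basis transformations carry the standard basis $\{\nu^i\otimes\nu^j\mid 0\leq i,j\leq p-2\}$ of $I_C^\ast\otimes I_C^\ast$ to $\{\nu^{r+k}\otimes\nu^k\mid 1\leq r\leq p-2,\ 0\leq k\leq p-1\}\cup\{\nu^{p-2}\otimes\nu^{p-2}\}$. This is the integral check you sketch in your last paragraph, carried out for all $r$ at once.

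The integral route proves more than the proposition. In one stroke it shows that each $V(r)$ is free on $e_0$, that $V(1)+\cdots+V(p-2)$ is direct, and that $V$ is a pure sublattice with rank-one quotient. The paper uses all of this immediately afterwards to obtain $I_C^\ast\otimes I_C^\ast\cong\mathbb{Z}\oplus\Lambda_0^{p-2}$.

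Your character argument is cleaner, avoids case analysis, and generalises. Every nonzero element of $\mathbb{Z}[\nu]\cong\mathbb{Z}[\zeta_p]$ has all its Galois conjugates nonzero. Hence the same reasoning shows that $a\otimes b$ generates a free rank-one $\Lambda_0$-submodule for any nonzero $a,b\in I_C^\ast$. This covers the modules $U(r)$ and $W(f)$ of later sections with no extra work.

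Its limitation is that it works only over a field of characteristic zero. It cannot detect purity of $V$ in $I_C^\ast\otimes I_C^\ast$. Getting directness of the sum of the $V(r)$ would also need a further independence computation inside each $\chi_m$-isotypic component, which has dimension $p-2$ for $m\neq 0$ and $p-1$ for $m=0$.
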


As their sum is direct, we set $V=V(1)\oplus\cdots\oplus V(p-2)$ and observe $rk_\mathbb{Z}(V)=p(p-2)$. Thus, $rk_\mathbb{Z}((I_C^\ast\otimes I_C^\ast)/V)=1$. Observe, $(I_C^\ast\otimes I_C^\ast)/V$ is torsion free and hence isomorphic to $\mathbb{Z}$. In fact, by performing elementary basis transformations to the basis $\{\nu^i\otimes\nu^j\:|\:0\leq i,\,j\leq p-2\}$ of $I_C^\ast\otimes I_C^\ast$ we obtain a new basis $\{\nu^{r+k}\otimes\nu^k\:|\:1\leq r\leq p-2,\,0\leq k\leq p-1\}\cup\{\nu^{p-2}\otimes\nu^{p-2}\}$. Then $(I_C^\ast\otimes I_C^\ast)/V$ is generated by $\natural(\nu^{p-2}\otimes\nu^{p-2})=\natural(T)$, where $\natural$ is the natural surjection $\natural:I_C^\ast\otimes I_C^\ast\to (I_C^\ast\otimes I_C^\ast)/V$ and $T=\sum_{i=0}^{p-2}\left(\sum_{j=i}^{p-2}\nu^i\otimes \nu^j\right)$. It is clear that $T\in I_C^\ast\otimes I_C^\ast$ but $T\not\in V$, and that $Tx=T$, thereby showing that $x$ acts trivially on $(I_C^\ast\otimes I_C^\ast)/V$. Thus, our isomorphism extends to one over $\Lambda_0$, i.e. $(I_C^\ast\otimes I_C^\ast)/V\cong_{\Lambda_0}\mathbb{Z}$. The claim now follows.

\section{Preliminaries 2: The Modules $R(1),\,R(2)$ and $R(3)$}
\label{sec:preliminaries_2}

The details of the following can be found in Chapters 6-9 of \cite{Johnson2021}. Recall $G(p,\,3)$ is the semidirect product $C_p\rtimes C_3$, where $C_3$ acts on $C_p$ via the natural imbedding $C_3\hookrightarrow Aut(C_p)$. Let $A=\mathbb{Z}(\zeta_p)^{C_3}$ be the fixed point ring, where $\zeta_p=exp(2\pi i/p)$. Note $A$ is a Dedekind domain containing a unique principal prime ideal $(\pi)$ which ramifies completely over $p$. We have the following subring of $M_3(A)$ consisting of quasi-triangular matrices, $\mathcal{T}_3(A,\,\pi)=\{X\in M_3(A)\:|\:x_{ij}\in(\pi)\text{ if }i>j\}$, which occurs in the following fibre square:
\begin{small}
\begin{equation*}
\begin{CD}
\Lambda	@>>>	\mathcal{T}_3(A,\,\pi)\\
@VVV	@VVV\\
\mathbb{Z}[C_3]	@>>>	\mathbb{F}_p[C_3]
\end{CD}
\end{equation*}
\end{small}
\noindent where $\mathbb{F}_p$ is the field with $p$ elements. Denote the $i^{th}$ row submodule of $\mathcal{T}_3(A,\,\pi)$ by $R(i)$ and observe $\mathcal{T}_3(A,\,\pi)$ decomposes as a direct sum of right $\Lambda$-modules, $\mathcal{T}_3(A,\,\pi)\cong R(1)\oplus R(2)\oplus R(3)$. As each $R(i)$ is monogenic, for every $i\in\{1,\,2,\,3\}$ there is an exact sequence,
\[\mathcal{S}(i)=(0\to K(i)\to\Lambda\to R(i)\to 0)\]
where $K(i)$ is the kernel of the surjection $\Lambda\to R(i)$.

To obtain a more suitable description of $R(i)$, we first define a Galois structure on a $\Lambda_0$-module $M$ to be an additive automorphism $\Theta:M\to M$ such that $\Theta^3=Id_M$ and $\Theta(m\cdot x)=\Theta(m)\cdot\theta(x)$ for all $m\in M$ and where $\theta$ is our chosen automorphism of $C_p$. By a Galois lattice, we mean a pair $(M,\,\Theta)$, where $M$ is a lattice over $\Lambda_0$ and $\Theta$ is a Galois structure on $M$. This Galois lattice becomes a right lattice over $\Lambda$ via the action $m\cdot x^ry^s=\Theta^{-s}(m\cdot x^r)$.

Let $i:\Lambda_0\hookrightarrow\Lambda$ and $j:\mathbb{Z}[C_3]\hookrightarrow\Lambda$ denote the natural inclusions. For any Galois lattice $(M,\,\Theta)$ there is an isomorphism $\mathbb{Z}[C_3]\otimes (M,\,\Theta)\cong i_\ast(M)\:\:(=M\otimes_{\mathbb{Z}[C_p]}\Lambda)$. Next, $\Lambda_0$ acquires a $\Lambda$-module structure upon extending the action of $C_p$ to one of $G(p,\,3)$ by requiring $y$ to act on the right by conjugation. Extending linearly gives a $\Lambda$-module denoted by $\overline{\Lambda_0}$. More generally, if $J\subset \Lambda_0$ is a $\Lambda_0$-submodule invariant under conjugation by $y$, then $J$ becomes a $\Lambda$-module, denoted by $\overline{J}$, in which $y$ acts in the same way. Observe $(x-1)^kI_C$ and $I_C^\ast$ are invariant under conjugation by $y$, and so we obtain $\Lambda$-modules $\overline{I_C},\,\overline{(x-1)I_C},\,\overline{I_C^\ast}$. We note the following recognition criteria for $\overline{I_C^\ast}$.

\begin{propn}
\label{Criteria_for_dual_augmentation_ideal}
Let $M$ be a $\Lambda$-lattice satisfying the following three conditions:
\begin{small}
\begin{itemize}
\item[(I)] there exists $\mu\in M$ such that $\mu\cdot y=\mu$ and $M=span_\mathbb{Z}\{\mu\cdot x^r\:|\:0\leq r\leq p-1\}$;
\item[(II)] $rk_\mathbb{Z}(M)=p-1$;
\item[(III)] $m\cdot\Sigma_x=0$ for each $m\in M$, where $\Sigma_x=\sum^{p-1}_{i=0}x^i$.
\end{itemize}
\end{small}
Then $M\cong_\Lambda \overline{I^\ast_C}$ and $\{\mu\cdot x^r\:|\:0\leq r\leq p-2\}$ is a $\mathbb{Z}$-basis for $M$.
\end{propn}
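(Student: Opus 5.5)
Define $\varphi:M\to\overline{I_C^\ast}$ by $\mu\cdot\lambda\mapsto 1\cdot\lambda$ for $\lambda\in\Lambda$, where $1=\nu^0$. The plan is to show this is a well-defined $\Lambda$-isomorphism; the basis statement for $M$ then comes along the way. Since $1\in\overline{I_C^\ast}$ is fixed by $y$ (conjugation fixes the identity) and $\overline{I_C^\ast}$ is spanned by the $\nu^r=1\cdot x^r$, the module $\overline{I_C^\ast}$ satisfies (I)--(III) with $\mu=1$. So both modules are cyclic quotients of $\Lambda$, and it is enough to compare annihilators of the generators.

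\textbf{Step 1 (the $\mathbb{Z}$-basis).} Condition (III) gives $\mu\cdot x^{p-1}=-\sum_{r=0}^{p-2}\mu\cdot x^r$. With (I), this shows $\{\mu x^r : 0\le r\le p-2\}$ spans $M$ over $\mathbb{Z}$. These are $p-1$ spanning elements of a free abelian group of rank $p-1$ by (II), hence they form a basis.

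\textbf{Step 2 (define $\varphi$ on the $\mathbb{Z}$-basis).} Set $\varphi(\mu x^r)=\nu^r$ for $0\le r\le p-2$. This is a $\mathbb{Z}$-isomorphism, since both sides are bases. Because $\nu^{p-1}=-\sum_{r\le p-2}\nu^r$ in $\overline{I_C^\ast}$, we also get $\varphi(\mu x^{p-1})=\nu^{p-1}$. Hence $\varphi(\mu x^r)=\nu^r$ for every $r$ modulo $p$.

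\textbf{Step 3 ($x$-equivariance).} Right multiplication by $x$ sends $\mu x^r\mapsto\mu x^{r+1}$ and $\nu^r\mapsto\nu^{r+1}$, indices taken mod $p$. By Step 2, $\varphi$ commutes with $x$.

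\textbf{Step 4 ($y$-equivariance).} This is the only step that uses the semidirect structure. From $yxy^{-1}=x^\alpha$ we get $x^r y=y\,x^{\alpha^{-1} r}$, where $\alpha^{-1}$ denotes an inverse of $\alpha$ mod $p$; this relation should be checked carefully against the relation $yxy^{-1}=x^\alpha$ as stated. Then $\mu x^r y=\mu y\,x^{\alpha^{-1}r}=\mu x^{\alpha^{-1}r}$, using $\mu y=\mu$. In $\overline{I_C^\ast}$, $y$ acts on $\nu^r$ by conjugation, and the same group relation together with $1\cdot y=1$ gives $\nu^r\cdot y=\nu^{\alpha^{-1}r}$. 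Both actions are therefore computed by one identity in $\Lambda$, so $\varphi$ commutes with $y$.

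I expect no serious obstacle; the bookkeeping in Step 4 is the only place needing care, and it works because both actions come from the same relation in $G(p,3)$. Combining Steps 2--4, $\varphi$ is a bijective $\Lambda$-homomorphism, so $M\cong_\Lambda\overline{I_C^\ast}$. The basis assertion was proved in Step 1.
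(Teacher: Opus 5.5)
Your proof is correct and complete. Step 1 uses (III) to eliminate $\mu\cdot x^{p-1}$ and (II) to turn a spanning set of $p-1$ elements into a $\mathbb{Z}$-basis. The relation you flag in Step 4 is also right: from $yx^{k}y^{-1}=x^{\alpha k}$ with $k\equiv\alpha^{-1}$ one gets $xy=yx^{\alpha^{-1}}$. This matches the convention $\nu^i\cdot y=\nu^{\alpha^2 i}$ used in Section~\ref{sec:theorem_B_part_1}, since $\alpha^{-1}\equiv\alpha^2\pmod p$. A good feature of Step 4 is that you compute $\nu^r\cdot y$ from $1\cdot y=1$ and the module axioms alone, so it does not depend on how the conjugation action is written.

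The paper gives no proof of this proposition; it is quoted from Johnson's book (Chapters 6--9 of \cite{Johnson2021}), so there is no in-paper argument to match. The annihilator route you mention at the start would go as follows. The map $\lambda\mapsto\mu\cdot\lambda$ kills the right ideal $K=[y-1,\,\Sigma_x)$. This gives a surjection $\Lambda/K\to M$ between free abelian groups of rank $p-1$, using the $\mathbb{Z}$-basis of $K$ from Proposition~\ref{Z_basis_for_K_q_is_3}, hence an isomorphism; the same applies to $\overline{I_C^\ast}$. That version needs the explicit basis of $K$. Moreover, the paper later uses this very criterion to identify $\Lambda/K$ with $\overline{I_C^\ast}$ (Proposition~\ref{Lambda_quotient_K_is_R3}). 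Your direct check on bases and on the generators $x,y$ is self-contained and avoids any appearance of circularity.
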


\noindent We note the following:
\begin{equation}
\label{R_i_iso_to_I_C}
R(1)\cong \overline{I_C};\:\:\:R(2)\cong \overline{(x-1)I_C};\:\:\:R(3)\cong\overline{I_C^\ast}.
\end{equation}
\begin{equation}
\label{duality_of_Ri}
R(i)^\ast\cong R(4-i).
\end{equation}
\begin{equation}
	R(k+1)\oplus[y-1)\text{ is a minimal element of }D_{2k+1}(\mathbb{Z}).
\end{equation}

\noindent It is important to note that it is not true in general that $R(k+1)\oplus[y-1)$ is a genuine syzygy of $R(1)\oplus[y-1)$. For example, in \cite{Johnson2021}, it was shown that this property fails in the case of $G(13,\,12)$. This case differs in several significant ways from that of this paper. One key difference is that $\widetilde{K_0}(\mathbb{Z}[C_{12}])\neq 0$, unlike in our case.

\begin{propn}
\label{Ri_are_straight}
For each $i$, $[R(i)]$ is straight in the sense of \cite{DyerSieradski1973}.
\end{propn}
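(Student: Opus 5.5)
Straightness in the sense of Dyer–Sieradski means that the stable class $[R(i)]$, ordered by $M\le N$ iff $N\cong M\oplus\Lambda^k$, is a single chain. Equivalently, every lattice stably equivalent to $R(i)$ is isomorphic to $R(i)\oplus\Lambda^k$ for a unique $k$. Because $R(i)^\ast\cong R(4-i)$ by (\ref{duality_of_Ri}) and duality preserves stable classes and the tree structure, it suffices to treat $R(1)$ and $R(2)$; the case $R(3)$ then follows from $R(1)$. The plan has two parts. First, show that $R(i)$ is a minimal element with no proper projective summand, and that cancellation holds in the stable class: $R(i)\oplus\Lambda^a\cong R(i)\oplus\Lambda^b$ forces $a=b$ (immediate from ranks), and any $M$ with $M\oplus\Lambda^a\cong R(i)\oplus\Lambda^{b}$ satisfies $M\cong R(i)\oplus\Lambda^{b-a}$.

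For the cancellation step I would use the fibre square for $\Lambda$ over $\mathcal{T}_3(A,\pi)$ and $\mathbb{Z}[C_3]$ with base $\mathbb{F}_p[C_3]$, together with a Milnor patching description of lattices. A lattice $M$ in the genus of $R(i)\oplus\Lambda^k$ is determined by its images over $\mathcal{T}_3(A,\pi)$ and $\mathbb{Z}[C_3]$ and a gluing automorphism over $\mathbb{F}_p[C_3]$.
\begin{itemize}
\item Over $\mathbb{Z}[C_3]$, the image of $R(i)$ is essentially zero or $\mathbb{F}_p$-torsion, since $R(i)\cong\overline{I_C}$ type modules are killed by $\Sigma_x$. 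Projectives there are free because $\widetilde{K_0}(\mathbb{Z}[C_3])=0$, and $\mathbb{Z}[C_3]$ has the Eichler/cancellation property.
\item Over $\mathcal{T}_3(A,\pi)$, which is hereditary, lattices are sums of row modules twisted by ideals of the Dedekind domain $A$. Here the relevant cancellation follows from Steinitz-type theory.
\end{itemize}
The point is to show that the extra freedom, namely ideal classes in $Cl(A)$ and units in the gluing, is absorbed once one $\Lambda$ summand is present. Concretely, $M\oplus\Lambda\cong R(i)\oplus\Lambda^{2}$ should give $M\cong R(i)\oplus\Lambda$, and an induction on the number of free summands then finishes the argument.

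The main obstacle is the unit/gluing analysis. One needs surjectivity of $\mathrm{Aut}(R(i)\oplus\Lambda^k)$, or of the relevant units of $\mathcal{T}_3(A,\pi)$ and $\mathbb{Z}[C_3]$, onto the automorphisms of the reduction over $\mathbb{F}_p[C_3]$, modulo the stabilising ones. My first attempt would exploit the fact that $R(i)$ is monogenic via $\mathcal{S}(i)$ and that $R(i)\oplus\Lambda^k$ has a free summand. Elementary matrices over $\Lambda$ lift elementary matrices over $\mathbb{F}_p[C_3]\cong\mathbb{F}_p^3$, and $SL$ over that semisimple ring is generated by elementaries. This reduces the problem to determinants, which can be matched using the units $\pm C_3$ and the units of $A$. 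These are exactly the ingredients Johnson uses in Chapters 6--9 of \cite{Johnson2021}, so I would import the cancellation statements for $\Lambda$ from there rather than reprove them.
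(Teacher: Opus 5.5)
There is a genuine gap, and it lies exactly where the content of the statement is. Since $\Lambda$ satisfies the Eichler condition, Swan--Jacobinski already gives uniqueness at every level above the bottom one in \emph{any} stable class; this is why the $[K(i)]$ are forks. So your concrete target ``$M\oplus\Lambda\cong R(i)\oplus\Lambda^2\Rightarrow M\cong R(i)\oplus\Lambda$'' and the induction on free summands prove nothing specific to $R(i)$. What has to be shown is the bottom level: $X\oplus\Lambda\cong R(i)\oplus\Lambda$ implies $X\cong R(i)$. There, no spare copy of $\Lambda$ is available to absorb ideal classes or units.

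Your machinery does not reach this case either. Milnor patching classifies \emph{projective} $\Lambda$-modules by triples $(P_1,P_2,h)$. But $R(i)$, and everything in its stable class, is a non-projective lattice: for instance $R(i)\otimes_\Lambda\mathbb{Z}[C_3]=R(i)/R(i)(x-1)$ is nonzero $p$-torsion. So such lattices are not described by ``images over $\mathcal{T}_3(A,\,\pi)$ and $\mathbb{Z}[C_3]$ plus a gluing automorphism''. The determinant-matching step is asserted rather than proved. Finally, importing ``the cancellation statements for $\Lambda$'' from \cite{Johnson2021} simply defers the decisive step.

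The missing idea is to move the whole problem onto $\mathcal{T}_3(A,\,\pi)$, over which $R(i)$ is projective. Let $h:X\oplus\Lambda\cong R(i)\oplus\Lambda$.
\begin{itemize}
\item Wedderburn cancellation over $\mathbb{Q}[G(p,\,3)]$ gives $X_\mathbb{Q}\cong R(i)_\mathbb{Q}$, so $X\Sigma_x=0$.
\item Since $\Sigma_x$ acts as multiplication by $p$ on $\mathbb{Z}[C_3]$, neither $X$ nor $\mathcal{T}_3(A,\,\pi)$ admits a nonzero $\Lambda$-map to $\mathbb{Z}[C_3]$.
\item Hence $h$ carries the kernel $X\oplus\mathcal{T}_3(A,\,\pi)$ of $X\oplus\Lambda\to\mathbb{Z}[C_3]$ into the kernel $R(i)\oplus\mathcal{T}_3(A,\,\pi)$ of $R(i)\oplus\Lambda\to\mathbb{Z}[C_3]$.
\item The induced endomorphism of $\mathbb{Z}[C_3]$ is surjective, hence bijective. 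So $h$ restricts to an isomorphism $X\oplus\mathcal{T}_3(A,\,\pi)\cong R(i)\oplus\mathcal{T}_3(A,\,\pi)$ of coinduced lattices, that is, of $\mathcal{T}_3(A,\,\pi)$-modules.
\item Cancellation in $\mathcal{P}(\mathcal{T}_3(A,\,\pi))$, (\ref{P_of_Tq_is_cancellation_semigroup}), gives $X\cong R(i)$ over $\mathcal{T}_3(A,\,\pi)$, hence over $\Lambda$ by (\ref{iso_over_Lambda_iff_iso_over_Tq}).
\end{itemize}
You had the right ingredient in noting that $R(i)$ is killed by $\Sigma_x$. The point is to propagate this to $X$ and so discard the $\mathbb{Z}[C_3]$ side altogether, which removes any need for unit or gluing analysis.
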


The following proof was communicated to the first named author by F.E.A. Johnson, the details of which we now outline. The key point is the following general principle (p. 188 of \cite{Johnson2021}):
\begin{equation}
\label{iso_over_Lambda_iff_iso_over_Tq}
\text{If }\:M,\,N\:\text{ are coinduced }\:\Lambda\text{-lattices, then }\:M\cong_\Lambda N\:\:\Leftrightarrow\:\:M\cong_{\mathcal{T}_3(A,\,\pi)}N.
\end{equation}
\noindent Note that a $\Lambda$-lattice $M$ is coinduced if $M\cdot\Sigma_x=0$. This is equivalent to requiring $M$ is in the image of the functor $\rho^\ast:\mathcal{M}od_{\mathcal{T}_3(A,\,\pi)}\to\mathcal{M}od_\Lambda$ induced by the surjective ring homomorphism $\rho:\Lambda\to\mathcal{T}_3(A,\,\pi)$. Observe Corollary 51.10 in \cite{Johnson2021}:
\begin{equation}
\label{P_of_Tq_is_cancellation_semigroup}
\mathcal{P}(\mathcal{T}_3(A,\,\pi))\:\text{ is a cancellation semigroup.}
\end{equation}

\noindent From this, the following can be straightforwardly deduced:
\begin{equation}
\label{Ri_straight_over_Tq}
\text{Each stable class }\:[R(i)]\:\text{ is straight over }\:\mathcal{T}_3(A,\,\pi).
\end{equation}

To show $[R(i)]$ is straight over $\Lambda$, it suffices to consider $X\oplus\Lambda\cong R(i)\oplus\Lambda$ by Swan-Jacobinski. Working over $\Lambda_\mathbb{Q}=\mathbb{Q}[G(p,\,3)]$, we have $X_\mathbb{Q}\oplus\Lambda_\mathbb{Q}\cong_{\Lambda_\mathbb{Q}} R(i)_\mathbb{Q}\oplus\Lambda_\mathbb{Q}$. By Wedderburn's Theorem, $X_\mathbb{Q}\cong_{\Lambda_\mathbb{Q}} R(i)_\mathbb{Q}$, from which it follows that $Hom_\Lambda(X,\,\mathbb{Z}[C_3])=0$. Now let $h:X\oplus\Lambda\to R(i)\oplus\Lambda$ be a $\Lambda$-isomorphism, and consider the following diagram,
\begin{small}
\begin{center}
\begin{tikzpicture}[-stealth,
  label/.style = { font=\footnotesize }]
  \matrix (m)
    [
      matrix of math nodes,
      row sep    = 4em,
      column sep = 4em
    ]
    {
      0 & X\oplus\mathcal{T}_3(A,\,\pi) & X\oplus\Lambda & \mathbb{Z}[C_3] & 0 \\
      0 & R(i)\oplus\mathcal{T}_3(A,\,\pi) & R(i)\oplus\Lambda & \mathbb{Z}[C_3] & 0. \\
    };
  \path (m-1-3) edge node [left,  label] {$h$} (m-2-3);
  
  \path (m-1-2) edge node [above, label] {$\iota$} (m-1-3);
  \path (m-2-2) edge node [above, label] {$\iota^\prime$} (m-2-3);
  
  \path (m-1-3) edge node [above, label] {$p$} (m-1-4);
  \path (m-2-3) edge node [above, label] {$p^\prime$} (m-2-4);  
  
  \path (m-1-1) edge node [above, label] {} (m-1-2);
  \path (m-2-1) edge node [above, label] {} (m-2-2);
  \path (m-1-4) edge node [above, label] {} (m-1-5);
  \path (m-2-4) edge node [above, label] {} (m-2-5);

\end{tikzpicture}
\end{center}
\end{small}

\noindent Since $Hom_\Lambda(X,\,\mathbb{Z}[C_3])=0$, this completes to the following commutative diagram,

\begin{small}
\begin{center}
\begin{tikzpicture}[-stealth,
  label/.style = { font=\footnotesize }]
  \matrix (m)
    [
      matrix of math nodes,
      row sep    = 4em,
      column sep = 4em
    ]
    {
      0 & X\oplus\mathcal{T}_3(A,\,\pi) & X\oplus\Lambda & \mathbb{Z}[C_3] & 0 \\
      0 & R(i)\oplus\mathcal{T}_3(A,\,\pi) & R(i)\oplus\Lambda & \mathbb{Z}[C_3] & 0 \\
    };
  \path (m-1-2) edge node [left,  label] {$h_-$} (m-2-2);
  \path (m-1-3) edge node [left,  label] {$h$} (m-2-3);
  \path (m-1-4) edge node [left,  label] {$h_+$} (m-2-4);
  
  \path (m-1-2) edge node [above, label] {$\iota$} (m-1-3);
  \path (m-2-2) edge node [above, label] {$\iota^\prime$} (m-2-3);
  
  \path (m-1-3) edge node [above, label] {$p$} (m-1-4);
  \path (m-2-3) edge node [above, label] {$p^\prime$} (m-2-4);  
  
  \path (m-1-1) edge node [above, label] {} (m-1-2);
  \path (m-2-1) edge node [above, label] {} (m-2-2);
  \path (m-1-4) edge node [above, label] {} (m-1-5);
  \path (m-2-4) edge node [above, label] {} (m-2-5);

\end{tikzpicture}
\end{center}
\end{small}

\noindent in which $h_+$ is surjective. Since $\mathbb{Z}[C_3]$ is free abelian of finite rank, it follows that $h_+$ is an isomorphism, and hence so too is $h_-$. As the $\Lambda$-module structure on $R(i)\oplus\mathcal{T}_3(A,\,\pi)$ is coinduced from $\mathcal{T}_3(A,\,\pi)$, it follows that the $\Lambda$-module $X$ is coinduced from a $\mathcal{T}_3(A,\,\pi)$-module $\tilde{X}$. Consequently, $h_-$ defines a $\mathcal{T}_3(A,\,\pi)$-isomorphism, $h_-:\tilde{X}\oplus\mathcal{T}_3(A,\,\pi)\to R(i)\oplus\mathcal{T}_3(A,\,\pi)$. Proposition \ref{Ri_are_straight} now follows from (\ref{Ri_straight_over_Tq}) and (\ref{iso_over_Lambda_iff_iso_over_Tq}).

Extending this, we note Corollary II of \cite{Johnson2021}:
\begin{equation*}
\label{Straightness_of_odd_syzygies}
\text{The stability class of }R(k)\oplus [y-1)\text{ is straight for any }k.
\end{equation*}
An obvious question then becomes whether or not the same is true for the even (generalised) syzygies? At present, this is unknown, even in the smallest case $G(7,\,3)$. However,  we do note the following useful observation which follows from duality:

\begin{propn}
\label{even_syzygies_straightness}
	$\Omega_2(\mathbb{Z})$ is straight if and only if $\Omega_4(\mathbb{Z})$ is straight.
\end{propn}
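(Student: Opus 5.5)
We prove Proposition \ref{even_syzygies_straightness} by duality in the category of $\Lambda$-lattices, where syzygies of $\mathbb{Z}$ run backwards under $(-)^\ast$. First we identify $\Omega_4(\mathbb{Z})$ with the stable class $[\Omega_2(\mathbb{Z})^\ast]$, up to the fact that both have the same minimal rank behaviour. Then we check that taking duals preserves straightness.

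For the first step, take a free resolution $0\to J\to F_1\to F_0\to\mathbb{Z}\to 0$ with $J\in\Omega_2(\mathbb{Z})$. Splice it with a truncated periodic-type resolution. Since $G(p,3)$ has periodic cohomology of period $6$ (it is a metacyclic group with cyclic Sylow subgroups), there is an exact sequence
\[0\to\mathbb{Z}\to F_5\to\cdots\to F_0\to\mathbb{Z}\to 0\]
with the $F_i$ free. Hence $\Omega_{6-r}(\mathbb{Z})\sim\Omega_r(\mathbb{Z})^\ast$ up to the usual dimension shift. More concretely, dualise $0\to J\to F_1\to F_0\to\mathbb{Z}\to0$. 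Because all terms are lattices, we get an exact sequence $0\to\mathbb{Z}\to F_0^\ast\to F_1^\ast\to J^\ast\to0$. This exhibits $J^\ast$ as a "co-syzygy" of $\mathbb{Z}$, that is, an element of $\Omega_{-2}(\mathbb{Z})$. Splicing with the period-$6$ resolution then shows $J^\ast\in\Omega_4(\mathbb{Z})$. Schanuel's lemma, together with the fact that projective lattices are injective (Theorem 28.5 of \cite{Johnson2003}), makes the class well defined. Conversely, every element of $\Omega_4(\mathbb{Z})$ dualises into $\Omega_2(\mathbb{Z})$. So $(-)^\ast$ gives a bijection $\Omega_2(\mathbb{Z})\leftrightarrow\Omega_4(\mathbb{Z})$ of isomorphism classes. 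It preserves $\mathbb{Z}$-rank and commutes with adding free summands, since $\Lambda^\ast\cong\Lambda$.

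For the second step, recall that straightness (in the sense of \cite{DyerSieradski1973}) is a property of the tree of isomorphism classes in a stable class, graded by rank. It asks that there be a unique class at each rank level, that is, that $X\oplus\Lambda^a\cong X'\oplus\Lambda^a$ with equal ranks force $X\cong X'$. Since $(X\oplus\Lambda^a)^\ast\cong X^\ast\oplus\Lambda^a$ and $X^{\ast\ast}\cong X$, dualising gives an isomorphism of trees. Hence one stable class is straight if and only if its dual class is.

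The main obstacle is the first step: making sure the period-$6$ splicing lands precisely in $\Omega_4$ rather than in a generalised $D_4$. I would handle this by using only honest free modules and Schanuel's lemma, avoiding the fibre-square descriptions.
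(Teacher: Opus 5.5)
Your proposal is correct and is essentially the paper's argument: you dualise $0\to J\to F_1\to F_0\to\mathbb{Z}\to 0$, use the free resolution of period $6$ to identify $\Omega_{-2}(\mathbb{Z})$ with $\Omega_4(\mathbb{Z})$, and use $X^{\ast\ast}\cong X$ and $\Lambda^\ast\cong\Lambda$ to move isomorphism classes between the two stable classes. The only difference is that you compare the two trees level by level through the rank-preserving bijection $J\mapsto J^\ast$, whereas the paper first invokes the fork structure of $\Omega_2(\mathbb{Z})$ and $\Omega_4(\mathbb{Z})$ to reduce to the minimal level; your version does not need that extra input.
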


\begin{proof}
	By Proposition 29.5 of \cite{Johnson2003}, $\Omega_2(\mathbb{Z})$ and $\Omega_4(\mathbb{Z})$ both have the structure of a fork, i.e. we have one or more modules at the minimal level which stabilise after adding a single copy of $\Lambda$. As such, we consider just modules at the minimal level.
	
	Suppose $\Omega_2(\mathbb{Z})$ is straight; that is, there is a unique minimal representative, call it $C$. We know $C$ occurs in an exact sequence of the form $0\to C\to \Lambda^a\to\Lambda^b\to\mathbb{Z}\to 0$ and that $C$ is also a $\Lambda$-lattice. As such, we also have the exact sequence $0\to\mathbb{Z}\to\Lambda^b\to\Lambda^a\to C^\ast\to 0$, where we have used the self-duality of $\mathbb{Z},\,\Lambda^a,\,\Lambda^b$. However, this exact sequence represents the syzygy $\Omega_{-2}(\mathbb{Z})=\Omega_4(\mathbb{Z})$, where the last equality follows from the fact $G(p,\,3)$ has cohomological period 6 (see Proposition 41.3 of \cite{Johnson2003}). Choose a minimal representative $D\in\Omega_4(\mathbb{Z})$ so that $D\oplus\Lambda\cong C^\ast\oplus\Lambda$. Dualising gives $D^\ast\oplus\Lambda\cong C^{\ast\ast}\oplus\Lambda\cong C\oplus\Lambda$, and since $\Omega_2(\mathbb{Z})$ is straight, we have $D^\ast\cong C$. Finally, $D\cong D^{\ast\ast}\cong C^\ast$, and we conclude $\Omega_4(\mathbb{Z})$ is straight, as claimed. A similar argument shows the other direction.
\end{proof}

\section{The module $K$ acts as the identity}
\label{sec:K_as_the_id}

With the preliminary legwork complete, we now show Theorem \ref{Theorem_1}; that is, we show
\begin{equation}
K(3)\otimes ?\cong ?\oplus\Lambda^r
\end{equation}
for some $r\geq 0$ and where $?=K(1),\,R(1),\,K(2),\,R(2),\,K(3),\,R(3)$. Define the module $K=[y-1,\,\Sigma_x)$, where once again $\Sigma_x=\sum^{p-1}_{i=0}x^i$.

\begin{propn}
\label{Z_basis_for_K_q_is_3}
$K$ has $\mathbb{Z}$-basis $\mathcal{E}=\{(y^i-1)x^j\:|\:1\leq i\leq 2,\,0\leq j\leq p-1\}\cup\{\Sigma_x\}$.
\end{propn}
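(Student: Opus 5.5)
The plan is the usual two-step argument: show that $\mathcal{E}$ is linearly independent inside $\Lambda$, then show that it spans $K=[y-1,\,\Sigma_x)$, the right ideal generated by $y-1$ and $\Sigma_x$. Before doing either, I would pin down the rank. Since $\Lambda$ is free abelian on $\{x^jy^i\}$, any right ideal is a lattice. Rationally, the ideal generated by $y-1$ has rank $3p-p=2p$: it is the kernel of the projection $\Lambda\to\mathbb{Z}[C_p]$ obtained by pushing forward along $y\mapsto1$, which is well defined via cosets of $C_3$. Adding $\Sigma_x$ should increase the rank by exactly one. Hence $|\mathcal{E}|=2p+1$ is the expected rank.

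\textbf{Linear independence.} Write $x^jy^i$ in normal form, using $yx=x^{\alpha}y$. Then $(y^i-1)x^j=x^{j\alpha^i}y^i-x^j$ for $i=1,2$. The coefficient on the basis element $x^{j\alpha^i}y^i$ of $\Lambda$ isolates each element $(y^i-1)x^j$. The element $\Sigma_x$ has support only in the $y^0$-component, so it is independent of the others. Concretely, suppose $\sum a_{ij}(y^i-1)x^j+b\Sigma_x=0$. Projecting onto the $y^1$ and $y^2$ components gives $a_{ij}=0$, and then $b=0$.

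\textbf{Spanning.} It suffices to show that $S=\mathrm{span}_\mathbb{Z}\mathcal{E}$ is a right ideal containing both generators, i.e. that $S$ is closed under right multiplication by $x$ and by $y$.
\begin{itemize}
\item \emph{Closure under $x$.} We have $(y^i-1)x^j\cdot x=(y^i-1)x^{j+1}$, with indices read mod $p$, and $\Sigma_xx=\Sigma_x$.
\item \emph{Closure under $y$, first family.} Compute $(y-1)x^jy=yx^jy-x^jy$. Since $x^jy=yx^{j\alpha^{-1}}$, using $y^{-1}xy=x^{\alpha^{-1}}$ up to convention, this equals
\[
y^2x^{j\alpha^{-1}}-yx^{j\alpha^{-1}}=(y^2-1)x^{j\alpha^{-1}}-(y-1)x^{j\alpha^{-1}}\in S.
\]
\item \emph{Closure under $y$, second family.} Similarly, $(y^2-1)x^jy=(y^3-y)x^{j\alpha^{-1}}=-(y-1)x^{j\alpha^{-1}}\in S$.
\item \emph{Closure under $y$, the element $\Sigma_x$.} We have $\Sigma_xy=y\Sigma_x=(y-1)\Sigma_x+\Sigma_x$, and $(y-1)\Sigma_x=\sum_j(y-1)x^j\in S$.
\end{itemize}
Thus $S$ is a right ideal containing $y-1$ and $\Sigma_x$, so $K\subseteq S$. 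Conversely, every element of $\mathcal{E}$ visibly lies in $K$, since $y^2-1=(y-1)(y+1)$ and $(y+1)x^j\in\Lambda$. Hence $S=K$.

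The main obstacle is only bookkeeping: getting the conjugation convention ($yxy^{-1}=x^\alpha$) right when moving $y$ past $x^j$, and treating the indices $j$ modulo $p$. Since $\alpha$ is a unit mod $p$, the exponent maps are bijections, so no issue arises.
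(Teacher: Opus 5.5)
Your proof is correct and has the same span-plus-independence structure as the paper's. The differences are in bookkeeping: the paper takes the $\mathbb{Z}$-basis of $[y-1)$ as known and only computes $\Sigma_x y^i$, whereas you check directly that the span is closed under right multiplication by $x$ and $y$; and the paper obtains independence by noting that $\mathcal{E}\cup\{x^i\mid 1\leq i\leq p-1\}$ is a $\mathbb{Z}$-basis of $\Lambda$, which is the same triangularity as your projection onto the $y^1$, $y^2$ components but also gives that $\Lambda/K$ is free abelian of rank $p-1$ (used in the next proposition), something your argument yields with one more line since $\{\Sigma_x,x,\ldots,x^{p-1}\}$ is a basis of the $y^0$-component.
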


\begin{proof}
Start with the right module $[y-1)$ which is trivially a submodule of $K$. Moreover, note that $[y-1)$ has integral basis given by $\mathcal{E}\backslash\{\Sigma_x\}$. Next, for $\Sigma_x$ observe that for $i=1,\,2$,
\[\Sigma_xy^i=\sum^{p-1}_{j=0}(y^i-1)x^j+\Sigma_x.\]
Thus, $K$ is spanned over $\mathbb{Z}$ by $\mathcal{E}$. Finally, upon performing elementary basis transformations to the standard $\mathbb{Z}$-basis of $\Lambda$, it is evident that $\mathcal{E}\cup\{x^i\:|\:1\leq i\leq p-1\}$ is a $\mathbb{Z}$-basis for $\Lambda$. It therefore follows that $\mathcal{E}$ is a $\mathbb{Z}$-basis for $K$, as required.
\end{proof}

\begin{propn}
\label{Lambda_quotient_K_is_R3}
$\Lambda/K\cong\bar{I_C^\ast}$.
\end{propn}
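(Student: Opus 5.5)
The plan is to verify the three hypotheses of Proposition \ref{Criteria_for_dual_augmentation_ideal} for $M=\Lambda/K$, with the distinguished element $\mu=\natural(1)$, where $\natural:\Lambda\to\Lambda/K$ is the natural surjection.

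First I would settle the rank and torsion-freeness, which gives condition (II). By the proof of Proposition \ref{Z_basis_for_K_q_is_3}, the set $\mathcal{E}\cup\{x^i\:|\:1\leq i\leq p-1\}$ is a $\mathbb{Z}$-basis of $\Lambda$ extending the $\mathbb{Z}$-basis $\mathcal{E}$ of $K$. Hence $K$ is a $\mathbb{Z}$-direct summand of $\Lambda$, so $\Lambda/K$ is a lattice. Its $\mathbb{Z}$-basis is $\{\natural(x^i)\:|\:1\leq i\leq p-1\}$, and therefore $rk_\mathbb{Z}(\Lambda/K)=3p-(2p+1)=p-1$.

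Next come conditions (I) and (III), which are immediate from the generators of $K$. Since $y-1\in K$, we have $\mu\cdot y=\natural(y)=\natural(1)+\natural(y-1)=\mu$. Because $\Lambda/K$ is monogenic on $\mu$, it is spanned by the elements $\mu\cdot x^ry^s$. Using $\mu y=\mu$ and the relation $yxy^{-1}=x^\alpha$, each such element can be rewritten as some $\mu\cdot x^{r'}$: indeed $\mu\cdot y^s x^r = \mu\cdot x^{r\alpha^{s}}y^s$ (up to the appropriate exponent convention), and then, since $\mu\cdot y^s=\mu$ with $y^s$ acting last, one can move $y^s$ across. Care is needed here, so more cleanly: $\mu\cdot x^ry^s=\natural(x^ry^s)=\natural(y^s x^{r'})=\mu\cdot y^sx^{r'}=\mu\cdot x^{r'}$. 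So $M=span_\mathbb{Z}\{\mu x^r\}$. For (III), any $m=\mu\cdot\lambda$ satisfies $m\Sigma_x=\natural(\lambda\Sigma_x)$. Since $\Sigma_x$ is central (it is invariant under conjugation by $y$), $\lambda\Sigma_x=\Sigma_x\lambda\in K$, because $K$ is a right ideal containing $\Sigma_x$. Hence $m\Sigma_x=0$.

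Applying Proposition \ref{Criteria_for_dual_augmentation_ideal} then yields $\Lambda/K\cong_\Lambda\overline{I_C^\ast}$. The only delicate point, and the expected main obstacle, is the bookkeeping in (I): checking that the right action convention makes $y$ fix $\mu$ and that conjugation correctly permutes the powers of $x$. The rank count is already provided by the basis computed in Proposition \ref{Z_basis_for_K_q_is_3}. As a corollary, via (\ref{R_i_iso_to_I_C}) this identifies $\Lambda/K\cong R(3)$, and hence $K\cong K(3)$ up to the choice of surjection.
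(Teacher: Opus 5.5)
Your proposal is correct and matches the paper's proof: the paper also reads off the $\mathbb{Z}$-basis $\{\natural(x^i)\:|\:1\leq i\leq p-1\}$ of $\Lambda/K$ from Proposition \ref{Z_basis_for_K_q_is_3}, notes $\natural(1)y=\natural(1)$ because $y-1\in K$, and applies Proposition \ref{Criteria_for_dual_augmentation_ideal}; your centrality argument for (III) fills in a step the paper simply calls clear. Your closing aside overreaches slightly: for the fixed module $K(3)$ one only gets $K\sim K(3)$ by Schanuel, as in Corollary \ref{K_sim_K3}, unless $K(3)$ is defined via this particular surjection.
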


\begin{proof}
From the previous proof, $\mathcal{E}\cup\{x^i\:|\:1\leq i\leq p-1\}$ is a $\mathbb{Z}$-basis for $\Lambda$. Thus, $\Lambda/K$ has $\mathbb{Z}$-basis $\{\natural(x^i)\:|\:1\leq i\leq p-1\}$ where $\natural:\Lambda\to\Lambda/K$ is the identification map. It is clear that $m\Sigma_x =0$ for all $m\in\Lambda/K$ and note also that $\Lambda/K=span_\mathbb{Z}\{\natural(1)x^r\:|\:0\leq r\leq p-1\}.$ It remains to show $\natural(1)y=\natural(1)$. However, in $\Lambda$ we clearly have $1\cdot y=(y-1)+1$ and so $y$ acts trivially on $\natural(1)$ as required. The result follows from Proposition \ref{Criteria_for_dual_augmentation_ideal}.
\end{proof}

\begin{corollary}
\label{K_sim_K3}
$K\sim K(3)$.
\end{corollary}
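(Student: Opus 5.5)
The plan is to combine Proposition \ref{Lambda_quotient_K_is_R3} with the isomorphism $R(3)\cong\overline{I_C^\ast}$ from (\ref{R_i_iso_to_I_C}) and then apply Schanuel's lemma.

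First, by Proposition \ref{Lambda_quotient_K_is_R3} the quotient $\Lambda/K$ is isomorphic to $\overline{I_C^\ast}$. By (\ref{R_i_iso_to_I_C}), $\overline{I_C^\ast}\cong R(3)$. Together these give an exact sequence
\[0\to K\to\Lambda\to R(3)\to 0.\]
It remains to check that the composite $\Lambda\to\Lambda/K\cong R(3)$ is a $\Lambda$-homomorphism. This holds because the identification map $\natural$ is a module map and the isomorphism furnished by Proposition \ref{Criteria_for_dual_augmentation_ideal} is a $\Lambda$-isomorphism.

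Next, we compare this with the defining sequence $\mathcal{S}(3)=(0\to K(3)\to\Lambda\to R(3)\to 0)$. Both sequences present $R(3)$ as a quotient of $\Lambda$. Schanuel's lemma then gives
\[K\oplus\Lambda\cong K(3)\oplus\Lambda,\]
which is precisely $K\sim K(3)$ with $a=b=1$.

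There is essentially no obstacle in this argument. The only point needing care is that Schanuel's lemma applies to two exact sequences with the same right-hand module, up to isomorphism. That condition is secured by transporting the surjection $\Lambda\to\Lambda/K$ along the isomorphism $\Lambda/K\cong R(3)$.
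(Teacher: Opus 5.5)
Your proposal is correct and follows the paper's proof exactly. Proposition \ref{Lambda_quotient_K_is_R3} combined with $\overline{I_C^\ast}\cong R(3)$ gives $0\to K\to\Lambda\to R(3)\to 0$, and Schanuel's lemma against $\mathcal{S}(3)$ yields $K\oplus\Lambda\cong K(3)\oplus\Lambda$; your remark about transporting the surjection along the isomorphism is a harmless clarification that the paper leaves implicit.
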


\begin{proof}
By Proposition \ref{Lambda_quotient_K_is_R3}, $K$ arises in an exact sequence of the form $0\to K\to\Lambda\to\bar{I_C^\ast}\to 0$ and $\bar{I_C^\ast}\cong R(3)$ by (\ref{R_i_iso_to_I_C}). As we also have the exact sequence $0\to K(3)\to\Lambda\to R(3)\to 0$, the result follows from Schanuel's Lemma.
\end{proof} 

\noindent Next, we have an exact sequence, $0\to [y-1)\to K\to\mathbb{Z}\to 0$ since $x,\,y$ act trivially on $K/[y-1)$. Observe that tensoring with any of the $R(i)$ or $K(j)$ yields the exact sequence
\[0\to [y-1)\otimes ?\to K\otimes ?\to ?\to 0.\]

\noindent The following is clear:

\begin{propn}
\label{Iq_pushed_forward_by_j}
For $j:\mathbb{Z}[C_3]\hookrightarrow \Lambda$ the canonical injection, $j_\ast(I_3)=[y-1)$.
\end{propn}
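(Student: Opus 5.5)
The statement says that for $j:\mathbb{Z}[C_3]\hookrightarrow\Lambda$ the canonical inclusion, the induced module $j_\ast(I_3)=I_3\otimes_{\mathbb{Z}[C_3]}\Lambda$ is identified with the right ideal $[y-1)\subset\Lambda$. Here $I_3=[y-1)\subset\mathbb{Z}[C_3]$ is the augmentation ideal of $C_3$. I would define the comparison map and check it is an isomorphism using $\mathbb{Z}$-bases.

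\textbf{The map and surjectivity.} Define
\[\phi:I_3\otimes_{\mathbb{Z}[C_3]}\Lambda\to\Lambda,\qquad a\otimes\lambda\mapsto j(a)\lambda,\]
which is a right $\Lambda$-homomorphism. Its image is the right ideal generated by $j(I_3)$. Since $I_3$ is generated over $\mathbb{Z}[C_3]$ by $y-1$, this image is exactly $(y-1)\Lambda=[y-1)$. So $\phi$ is a surjection onto $[y-1)$, and it remains to show that $\phi$ is injective.

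\textbf{A basis for the source.} The ring $\Lambda$ is free as a left $\mathbb{Z}[C_3]$-module on the coset representatives $\{x^k\mid 0\leq k\leq p-1\}$, because every group element can be written uniquely as $y^ix^k$. Tensoring gives
\[I_3\otimes_{\mathbb{Z}[C_3]}\Lambda\cong\bigoplus_{k}I_3\otimes x^k.\]
This is a free abelian group of rank $2p$, with $\mathbb{Z}$-basis $\{(y^i-1)\otimes x^k\mid 1\leq i\leq 2,\ 0\leq k\leq p-1\}$. Under $\phi$ these elements go to $(y^i-1)x^k$.

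\textbf{Injectivity.} By the proof of Proposition \ref{Z_basis_for_K_q_is_3}, the elements $(y^i-1)x^k$ form a $\mathbb{Z}$-basis of $[y-1)$, i.e. the set $\mathcal{E}\backslash\{\Sigma_x\}$. So $\phi$ carries a $\mathbb{Z}$-basis bijectively onto a $\mathbb{Z}$-basis. Hence $\phi$ is injective, and therefore a $\Lambda$-isomorphism $j_\ast(I_3)\cong[y-1)$.

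The only step needing care is the linear independence of the elements $(y^i-1)x^k$ in $\Lambda$, and that was already established in Proposition \ref{Z_basis_for_K_q_is_3}. The alternative is to check it directly from the unique normal form $y^ix^k$ of group elements.
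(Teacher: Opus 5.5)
Your proposal is correct. The paper gives no argument (it simply calls the statement clear), and your multiplication map $a\otimes\lambda\mapsto j(a)\lambda$, together with the check via the left $\mathbb{Z}[C_3]$-basis $\{x^k\}$ of $\Lambda$, is exactly the routine verification being left implicit. Equivalently: $\Lambda$ is free, hence flat, as a left $\mathbb{Z}[C_3]$-module, so applying $-\otimes_{\mathbb{Z}[C_3]}\Lambda$ to $I_3\hookrightarrow\mathbb{Z}[C_3]$ gives an injection into $\Lambda$ with image $(y-1)\Lambda=[y-1)$.
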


\begin{propn}
\label{Ri_as_ZC3_mods}
With $j$ as above, $j^\ast(R(i))\cong\mathbb{Z}[C_3]^d$ for each $1\leq i\leq 3$.
\end{propn}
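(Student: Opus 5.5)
By (\ref{R_i_iso_to_I_C}), it suffices to prove that the restrictions $j^\ast(\overline{I_C})$, $j^\ast(\overline{(x-1)I_C})$ and $j^\ast(\overline{I_C^\ast})$ are each free of rank $d$ over $\mathbb{Z}[C_3]$. In all three cases the underlying group has $\mathbb{Z}$-rank $p-1=3d$, so the claim is that $y$ permutes a $\mathbb{Z}$-basis freely. The action of $y$ on each of these modules comes from the conjugation action $x\mapsto x^\alpha$ on $\Lambda_0$. The subgroup $\langle\alpha\rangle\subset\mathbb{F}_p^\ast$ has order $3$, so it acts freely on $\mathbb{F}_p^\ast$ by multiplication, with $d$ orbits. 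We fix orbit representatives $a_1,\ldots,a_d$, so that $\mathbb{F}_p^\ast=\{a_k\alpha^s\}$.

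For $R(3)\cong\overline{I_C^\ast}$ we have $I_C^\ast=\Lambda_0/(\Sigma)$. The classes $\nu^r$ for $r\in\mathbb{F}_p^\ast$ form a $\mathbb{Z}$-basis. Indeed, $\{\nu^1,\ldots,\nu^{p-1}\}$ spans: we have $1=\nu^0=-\sum_{r\neq0}\nu^r$, and there are $p-1$ of these elements in a module of rank $p-1$. Conjugation by $y$ sends $\nu^r$ to $\nu^{\alpha r}$ (or to $\nu^{\alpha^{-1}r}$, depending on convention). So this basis is permuted freely with $d$ orbits, and $j^\ast(R(3))\cong\mathbb{Z}[C_3]^d$ with generators $\nu^{a_k}$.

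For $R(1)\cong\overline{I_C}$, we use the basis $\{x^r-1: r\in\mathbb{F}_p^\ast\}$. Conjugation by $y$ sends $x^r-1$ to $x^{\alpha r}-1$, so this basis is again permuted freely, giving $\mathbb{Z}[C_3]^d$. Alternatively, we could deduce this from $R(1)\cong R(3)^\ast$ by (\ref{duality_of_Ri}), since restriction commutes with duals and $\mathbb{Z}[C_3]$ is self-dual.

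For $R(2)\cong\overline{(x-1)I_C}$ the natural spanning elements $(x-1)(x^r-1)$ are not permuted by $y$, because the factor $x-1$ is not conjugation-invariant. This is the step I expect to be the main obstacle. The cleanest route is via duality: (\ref{duality_of_Ri}) gives $R(2)^\ast\cong R(2)$, but that alone does not determine the structure. Instead I would rewrite the module as $(x-1)I_C=I_C^2$, the square of the augmentation ideal, which is conjugation-invariant. I would then look for a basis of $I_C^2$ permuted by $\langle\alpha\rangle$. A natural candidate is $\{(x^r-1)(x^{-r}-1)\}\cup\{\ldots\}$, but it is easier to argue abstractly. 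By the classification of $\mathbb{Z}[C_3]$-lattices (Reiner), $j^\ast(R(2))$ is a sum of copies of $\mathbb{Z}$, $\mathbb{Z}[\omega]$, $\mathbb{Z}[C_3]$ and ideals of $\mathbb{Z}[\omega]$, all of which are principal. It is therefore free of rank $d$ if and only if its rationalisation is free and $\hat H^0(C_3,-)=\hat H^{-1}(C_3,-)=0$.

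Rationally, $R(2)_\mathbb{Q}\cong R(1)_\mathbb{Q}$ is free. For the cohomology, I would use the exact sequence $0\to I_C^2\to I_C\to\mathbb{F}_p\to0$ of $C_3$-modules, where $y$ acts on $I_C/I_C^2\cong\mathbb{F}_p$ by multiplication by $\alpha$. Since $3$ is invertible on $\mathbb{F}_p$, the module $\mathbb{F}_p$ is cohomologically trivial. Since $I_C$ is free over $\mathbb{Z}[C_3]$, the long exact sequence in Tate cohomology then gives $\hat H^\ast(C_3,I_C^2)=0$. A lattice over $\mathbb{Z}[C_3]$ with vanishing Tate cohomology has no $\mathbb{Z}$ or $\mathbb{Z}[\omega]$ summands. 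So $j^\ast(R(2))$ is projective, and it is free because $\widetilde{K_0}(\mathbb{Z}[C_3])=0$. Its rank is $d$ by counting $\mathbb{Z}$-ranks.
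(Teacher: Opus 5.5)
Your argument is correct, but it follows a different route from the paper.

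\textbf{The paper's route.} It handles all three modules at once. It restricts $0\to R(1)\oplus R(2)\oplus R(3)\to\Lambda\to\mathbb{Z}[C_3]\to 0$ along $j$. This sequence splits because $\mathbb{Z}[C_3]$ is free, so each $j^\ast(R(i))$ is a summand of $\mathbb{Z}[C_3]^p$ and hence projective. It then uses $\widetilde{K_0}(\mathbb{Z}[C_3])=0$ to get stable freeness and Swan--Jacobinski to get freeness. The rank $d$ comes from $rk_\mathbb{Z}=p-1=3d$.

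\textbf{Your route.} You work module by module.
\begin{itemize}
\item For $R(1)$ and $R(3)$ you give explicit bases $\{x^r-1\}$ and $\{\nu^r\}$, $r\in\mathbb{F}_p^\ast$, freely permuted by $y$. This needs no $K$-theory and produces explicit $\mathbb{Z}[C_3]$-generators indexed by cosets of $\langle\alpha\rangle$ in $\mathbb{F}_p^\ast$.
\item For $R(2)=\overline{I_C^2}$ you get cohomological triviality from $0\to I_C^2\to I_C\to\mathbb{F}_p\to 0$. This works because $I_C$ is already known to be $\mathbb{Z}[C_3]$-free and $\gcd(3,p)=1$. You then invoke the Diederichsen--Reiner classification, with $h(\mathbb{Z}[\omega])=1$.
\end{itemize}
Since $\hat H^0(C_3,-)$ detects $\mathbb{Z}$-summands and $\hat H^{-1}(C_3,-)$ detects $\mathbb{Z}[\omega]$-summands, the classification already gives $j^\ast(R(2))\cong\mathbb{Z}[C_3]^d$ directly.

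Two small points. Your closing appeal to $\widetilde{K_0}(\mathbb{Z}[C_3])=0$ is redundant; on its own it would only give stable freeness, and you would also need cancellation. The remark about the rationalisation is not needed either.

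\textbf{Trade-off.} The paper's proof is shorter and uniform. It rests on identifying $\ker(\Lambda\to\mathbb{Z}[C_3])$ with $R(1)\oplus R(2)\oplus R(3)$ via the fibre square, and it generalises verbatim to $G(p,q)$ whenever $\widetilde{K_0}(\mathbb{Z}[C_q])=0$. Your proof avoids the fibre square entirely and is more elementary and explicit for $R(1)$ and $R(3)$. The cost is heavier lattice-theoretic input for $R(2)$. That input could be replaced by Rim's theorem (cohomologically trivial lattices are projective) followed by the paper's $\widetilde{K_0}$-plus-cancellation step.
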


\begin{proof}
Start with the exact sequence,
\[0\to R(1)\oplus R(2)\oplus R(3)\to\Lambda\to\mathbb{Z}[C_3]\to 0.\]
Upon applying the exact functor $j^\ast(-)$ we have the split exact sequence,
\[0\to j^\ast(R(1)\oplus R(2)\oplus R(3))\to\mathbb{Z}[C_3]^p\to\mathbb{Z}[C_3]\to 0.\]
It follows that each $R(i)$ is projective as a $\mathbb{Z}[C_3]$-module. As $\widetilde{K_0}(\mathbb{Z}[C_3])=0$, any projective module is stably free. Using Swan-Jacobinski, we therefore conclude $j^\ast(R(i))\cong\mathbb{Z}[C_3]^d$.
\end{proof}

This result can be generalised provided $\widetilde{K_0}(\mathbb{Z}[C_q])=0$. Such cases have been entirely determined by Cassou-Nogu\'{e}s (cf. \cite{Cassou-Nogues1973}). Specifically, for $q$ prime, $\widetilde{K_0}(\mathbb{Z}[C_q])=0$ if and only if $q\leq 19$. For $q$ composite, $\widetilde{K_0}(\mathbb{Z}[C_q])=0$ if and only if $q=4,\,6,\,8,\,9,\,10$ or $14$. The smallest case for which this fails is $q=12$ and the reader is directed to Chapter 13 of \cite{Johnson2021} for an exposition on how this impacts computations. In particular, $j^\ast(R(k))$ need not be free.

\begin{propn}
\label{Ri_otimes_K__iso_Ri}
$K\otimes R(i)\cong R(i)\oplus\Lambda^{2d}$, where $1\leq i\leq 3$ and $d=(p-1)/3$ as before.
\end{propn}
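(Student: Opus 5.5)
We will work from the exact sequence already noted just before the statement, tensored with $R(i)$:
\[0\to [y-1)\otimes R(i)\to K\otimes R(i)\to R(i)\to 0.\]
The plan has three steps. First, identify the left-hand term as free of rank $2d$. Second, show that the sequence splits. Third, read off $K\otimes R(i)\cong R(i)\oplus\Lambda^{2d}$.

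For the first step we use Proposition \ref{Iq_pushed_forward_by_j}, which gives $[y-1)=j_\ast(I_3)$, together with the standard Frobenius-type isomorphism $j_\ast(N)\otimes M\cong j_\ast(N\otimes j^\ast M)$ for induced modules. This gives
\[[y-1)\otimes R(i)\cong j_\ast(I_3\otimes j^\ast R(i)).\]
By Proposition \ref{Ri_as_ZC3_mods}, $j^\ast R(i)\cong\mathbb{Z}[C_3]^d$. Over $\mathbb{Z}[C_3]$ we have $I_3\otimes\mathbb{Z}[C_3]\cong\mathbb{Z}[C_3]^{\mathrm{rk}(I_3)}=\mathbb{Z}[C_3]^2$, because tensoring any lattice with a free module yields a free module of rank equal to the $\mathbb{Z}$-rank of the lattice. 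Hence
\[[y-1)\otimes R(i)\cong j_\ast(\mathbb{Z}[C_3]^{2d})\cong\Lambda^{2d}.\]
The ranks are consistent: $\mathrm{rk}_\mathbb{Z}(K\otimes R(i))=(2p+1)\,\mathrm{rk}(R(i))$, and $\mathrm{rk}_\mathbb{Z} R(i)=p-1=3d$, so $(2p+1)3d=3d+2d\cdot3p$, as required.

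For the second step, recall that projective $\Lambda$-lattices are injective in the category of $\Lambda$-lattices (Theorem 28.5 of \cite{Johnson2003}, quoted in Section \ref{sec:preliminaries_1}). Since all three terms are lattices, the injection $\Lambda^{2d}\to K\otimes R(i)$ splits. Equivalently, $\mathrm{Ext}^1_\Lambda(R(i),\Lambda^{2d})=0$ for the lattice $R(i)$. The splitting then gives $K\otimes R(i)\cong R(i)\oplus\Lambda^{2d}$ directly, and no stable-cancellation argument is needed.

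The step needing most care is the first: one must check that the isomorphism $j_\ast(N)\otimes M\cong j_\ast(N\otimes j^\ast M)$ holds for right modules with the diagonal action. It is given by $(n\otimes g)\otimes m\mapsto (n\otimes mg^{-1})\otimes g$, and we verify it is well defined and $\Lambda$-linear. Once this identification is in place, the rest follows formally, and the argument is uniform in $i=1,2,3$.
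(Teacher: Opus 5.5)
Your proposal is correct and follows essentially the same route as the paper: the same exact sequence, and the same identification $[y-1)\otimes R(i)\cong j_\ast(I_3\otimes j^\ast R(i))\cong\Lambda^{2d}$ via Frobenius reciprocity and Proposition \ref{Ri_as_ZC3_mods}, followed by splitting. The only difference is that you justify the splitting explicitly through the injectivity of projective $\Lambda$-lattices, and you also check the ranks and the Frobenius map, whereas the paper simply asserts that the sequence splits once the left term is known to be free.
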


\begin{proof}
Consider the following exact sequence,
\begin{equation}
	\label{Ri_otimes_K__iso_Ri_exact_seq}
	0\to [y-1)\otimes R(i)\to K\otimes R(i)\to R(i)\to 0.
\end{equation}
Using a special case of Curtis and Reiner's Tensor Product Theorem (p.243, \cite{CurtisReiner1990}) and Frobenius Reciprocity (Proposition 35.4, pp. 115-116, \cite{Johnson2021}), combined with Propositions \ref{Iq_pushed_forward_by_j} and \ref{Ri_as_ZC3_mods}, we have the following isomorphism:
\[j_\ast(I_3)\otimes R(i)\cong j_\ast(I_3\otimes j^\ast(R(i)))\cong j_\ast(I_3\otimes
\mathbb{Z}[C_3]^d)\cong j_\ast(\mathbb{Z}[C_3]^{2d})\cong\Lambda^{2d}.\]
\noindent Thus, \eqref{Ri_otimes_K__iso_Ri_exact_seq} splits, yielding $K\otimes R(i)\cong R(i)\oplus\Lambda^{2d}$, as required.
\end{proof}

\begin{propn}
\label{Ki_as_ZC3_mods}
$j^\ast(K(i))\cong\mathbb{Z}[C_3]^{p-d}$ for each $1\leq i\leq 3$.
\end{propn}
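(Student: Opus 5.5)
We would mirror the proof of Proposition \ref{Ri_as_ZC3_mods}, restricting the defining sequence $\mathcal{S}(i)$ to $\mathbb{Z}[C_3]$. Applying the exact restriction functor $j^\ast$ to $0\to K(i)\to\Lambda\to R(i)\to 0$ gives
\[0\to j^\ast(K(i))\to j^\ast(\Lambda)\to j^\ast(R(i))\to 0 .\]
Since $\Lambda$ is free over $\mathbb{Z}[C_3]$ on the coset representatives $1,x,\ldots,x^{p-1}$, we have $j^\ast(\Lambda)\cong\mathbb{Z}[C_3]^p$. By Proposition \ref{Ri_as_ZC3_mods}, $j^\ast(R(i))\cong\mathbb{Z}[C_3]^d$.

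Because $j^\ast(R(i))$ is free, hence projective, the restricted sequence splits, and
\[j^\ast(K(i))\oplus\mathbb{Z}[C_3]^d\cong\mathbb{Z}[C_3]^p .\]
So $j^\ast(K(i))$ is projective and stably free over $\mathbb{Z}[C_3]$, with $\mathbb{Z}$-rank $3p-3d=3(p-d)$, which is the rank of $\mathbb{Z}[C_3]^{p-d}$.

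It remains to cancel the stably free summand. Since $p-d\geq 1$, the Swan--Jacobinski cancellation theorem applies exactly as in the proof of Proposition \ref{Ri_as_ZC3_mods}: $\mathbb{Z}[C_3]$ satisfies Eichler's condition, being commutative. Its rational group algebra is $\mathbb{Q}\times\mathbb{Q}(\zeta_3)$, which has no totally definite quaternion factors. Alternatively, we can use that $\mathbb{Z}[C_3]$ has cancellation for projectives of positive rank. Either way, $j^\ast(K(i))\oplus\mathbb{Z}[C_3]^d\cong\mathbb{Z}[C_3]^{p-d}\oplus\mathbb{Z}[C_3]^d$ implies $j^\ast(K(i))\cong\mathbb{Z}[C_3]^{p-d}$.

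The only step needing care is this cancellation. It is covered by the same citation the paper already used, together with $\widetilde{K_0}(\mathbb{Z}[C_3])=0$. That vanishing is not actually needed here, since stable freeness already follows from the splitting. The remaining steps are routine.
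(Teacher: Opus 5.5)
Your proposal is correct and follows the paper's proof essentially verbatim. You restrict $\mathcal{S}(i)$ along $j$, use Proposition \ref{Ri_as_ZC3_mods} to split off $\mathbb{Z}[C_3]^d$ from $\mathbb{Z}[C_3]^p$, and cancel the resulting stably free module of rank $p-d$ via the Eichler condition (stably free cancellation) for $\mathbb{Z}[C_3]$. Your side remarks, that $\Lambda$ is free over $\mathbb{Z}[C_3]$ on $1,x,\ldots,x^{p-1}$ and that $\widetilde{K_0}(\mathbb{Z}[C_3])=0$ is not needed at this step, are accurate.
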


\begin{proof}
Start with the exact sequence, $0\to K(i)\to\Lambda\to R(i)\to 0$ and apply $j^\ast(-)$,
\[0\to j^\ast(K(i))\to\mathbb{Z}[C_3]^p\to j^\ast(R(i))\to 0.\]
By Proposition \ref{Ri_as_ZC3_mods}, $j^\ast(R(i))\cong\mathbb{Z}[C_3]^d$ and the above splits, yielding $j^\ast(K(i))\oplus\mathbb{Z}[C_3]^d\cong\mathbb{Z}[C_3]^p$, i.e. $j^\ast(K(i))$ is stably free of rank $p-d$. As $\mathbb{Z}[C_3]$ satisfies the Eichler condition (see \cite{Johnson2021}, pp. 26-27), it has stably free cancellation (SFC) and so $j^\ast(K(i))\cong\mathbb{Z}[C_3]^{p-d}$.
\end{proof}

\begin{propn}
\label{K_otimes_Ki_iso_Ki_oplus_free}
$K\otimes K(i)\cong K(i)\oplus\Lambda^{2(p-d)}$ for $1\leq i\leq 3$.
\end{propn}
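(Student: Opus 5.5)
The plan is to copy the proof of Proposition \ref{Ri_otimes_K__iso_Ri}, putting $K(i)$ in place of $R(i)$. We first tensor the exact sequence $0\to[y-1)\to K\to\mathbb{Z}\to 0$ over $\mathbb{Z}$ with $K(i)$. Since $K(i)$ is a $\Lambda$-lattice (a submodule of $\Lambda$), it is $\mathbb{Z}$-free, so exactness is preserved. Using the diagonal action, this gives
\[0\to[y-1)\otimes K(i)\to K\otimes K(i)\to K(i)\to 0 .\]

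Next we identify the left-hand term. By Proposition \ref{Iq_pushed_forward_by_j}, $[y-1)=j_\ast(I_3)$. By the special case of the Curtis--Reiner Tensor Product Theorem together with Frobenius Reciprocity, as used in Proposition \ref{Ri_otimes_K__iso_Ri}, we get $j_\ast(I_3)\otimes K(i)\cong j_\ast(I_3\otimes j^\ast(K(i)))$. Proposition \ref{Ki_as_ZC3_mods} gives $j^\ast(K(i))\cong\mathbb{Z}[C_3]^{p-d}$, so this becomes $j_\ast(I_3\otimes\mathbb{Z}[C_3]^{p-d})$. For any $\mathbb{Z}[C_3]$-lattice $N$ of $\mathbb{Z}$-rank $n$ we have $N\otimes\mathbb{Z}[C_3]\cong\mathbb{Z}[C_3]^n$. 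Since $I_3$ has rank $2$, we obtain $I_3\otimes\mathbb{Z}[C_3]^{p-d}\cong\mathbb{Z}[C_3]^{2(p-d)}$. Therefore
\[[y-1)\otimes K(i)\cong j_\ast(\mathbb{Z}[C_3]^{2(p-d)})\cong\Lambda^{2(p-d)} .\]

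Finally, we split the sequence. The kernel is free, hence projective, and in the category of $\Lambda$-lattices projective modules are injective (Theorem 28.5 of \cite{Johnson2003}). All three terms are lattices, so the sequence splits. This gives $K\otimes K(i)\cong K(i)\oplus\Lambda^{2(p-d)}$, as claimed. The same injectivity argument is what is implicitly used in Proposition \ref{Ri_otimes_K__iso_Ri}.

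The one step needing care is the identification of the kernel. We must check that the Frobenius isomorphism $j_\ast(A)\otimes M\cong j_\ast(A\otimes j^\ast M)$ holds for the lattice $M=K(i)$ exactly as it did for $R(i)$; it is a general fact about induction from a subgroup, so this should cause no difficulty. Beyond that, the argument relies on Proposition \ref{Ki_as_ZC3_mods}, which needs stably free cancellation over $\mathbb{Z}[C_3]$, and the paper has already established it.
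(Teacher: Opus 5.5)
Your proposal is correct and follows the paper's proof essentially verbatim. You tensor $0\to[y-1)\to K\to\mathbb{Z}\to 0$ with $K(i)$, identify $[y-1)\otimes K(i)\cong j_\ast(I_3\otimes j^\ast(K(i)))\cong\Lambda^{2(p-d)}$ via Frobenius reciprocity and Proposition \ref{Ki_as_ZC3_mods}, and split the resulting sequence, with your justification of the splitting (free lattices are injective among $\Lambda$-lattices) making explicit a step the paper only asserts.
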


\begin{proof}
Start with the following exact sequence,
\[0\to [y-1)\otimes K(i)\to K\otimes K(i)\to K(i)\to 0.\]
It suffices to consider $[y-1)\otimes K(i)$. Adopting the same strategy as in Proposition \ref{Ri_otimes_K__iso_Ri} shows,
\[j_\ast(I_3)\otimes K(i)\cong j_\ast(I_3\otimes j^\ast(K(i)))\cong j_\ast(I_3\otimes\mathbb{Z}[C_3]^{(p-d)})\cong j_\ast(\mathbb{Z}[C_3]^{2(p-d)})\cong \Lambda^{2(p-d)}.\]

\noindent Upon replacing $[y-1)\otimes K(i)$, we note the above exact sequence splits.
\end{proof}

\noindent Theorem \ref{Theorem_1} therefore follows from Propositions \ref{Ri_otimes_K__iso_Ri} and \ref{K_otimes_Ki_iso_Ki_oplus_free}, as we now show. First, note the following straightforward result:

\begin{lemma}
\label{lem:tensor_stable_isomorphism}
Let $M,\,N,\,X$ be $\Lambda$-lattices such that $n=rk_{\mathbb{Z}}(X)$. If $M\oplus\Lambda\cong N\oplus\Lambda$, then $(M\otimes_{\mathbb Z}X)\oplus\Lambda^{n}\cong(N\otimes_{\mathbb Z}X)\oplus\Lambda^{n}$.
\end{lemma}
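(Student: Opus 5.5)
The argument has two steps: tensoring with $X$ preserves direct sums, and $\Lambda\otimes_{\mathbb{Z}}X$ is free over $\Lambda$ of rank $n$.

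\emph{Step 1: distribute the tensor product.} Tensoring the given isomorphism $h:M\oplus\Lambda\to N\oplus\Lambda$ with $X$ over $\mathbb{Z}$ gives a $\Lambda$-isomorphism $h\otimes\mathrm{id}_X$. This holds because the diagonal action $(\nu\otimes\omega)g=\nu g\otimes\omega g$ commutes with $h\otimes\mathrm{id}_X$ whenever $h$ is $\Lambda$-linear. Since $-\otimes_{\mathbb{Z}}X$ is additive, we obtain
\[(M\otimes X)\oplus(\Lambda\otimes X)\cong(N\otimes X)\oplus(\Lambda\otimes X)\]
as $\Lambda$-modules. It therefore remains to show that $\Lambda\otimes_{\mathbb{Z}}X\cong\Lambda^n$.

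\emph{Step 2: $\Lambda\otimes X$ is free.} For this I would use the standard untwisting isomorphism, which is the special case of the Frobenius reciprocity/Tensor Product Theorem already cited in the proof of Proposition \ref{Ri_otimes_K__iso_Ri}, applied to the trivial subgroup. Write $X_0$ for the underlying abelian group of $X$ with trivial action. Define
\[\phi:\Lambda\otimes X_0\to\Lambda\otimes X,\qquad \phi(g\otimes\omega)=g\otimes\omega g\]
for $g\in G(p,3)$, extended $\mathbb{Z}$-linearly. The action on the source is $(g\otimes\omega)h=gh\otimes\omega$. Then
\[\phi\big((g\otimes\omega)h\big)=gh\otimes\omega gh=(g\otimes\omega g)h=\phi(g\otimes\omega)h,\]
so $\phi$ is $\Lambda$-linear. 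Its inverse is $g\otimes\omega\mapsto g\otimes\omega g^{-1}$, so $\phi$ is bijective. Finally, $\Lambda\otimes X_0\cong\Lambda\otimes\mathbb{Z}^n\cong\Lambda^n$, because $X$ is a lattice and hence $X_0\cong\mathbb{Z}^n$ as abelian groups. Substituting this into Step 1 yields $(M\otimes X)\oplus\Lambda^n\cong(N\otimes X)\oplus\Lambda^n$.

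No step is a genuine obstacle. The only point needing care is to check $\Lambda$-linearity of $\phi$ under the paper's right-module convention with the diagonal action, which the one-line computation above does.
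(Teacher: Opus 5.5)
Your proof is correct and is the standard argument the paper has in mind. The paper calls this lemma ``straightforward'' and gives no proof. The two ingredients you supply are exactly what is implicitly needed: that $-\otimes_{\mathbb Z}X$ preserves $\Lambda$-isomorphisms and direct sums under the diagonal action, and the untwisting isomorphism $\Lambda\otimes_{\mathbb Z}X\cong\Lambda^{n}$ (the trivial-subgroup case of the Frobenius reciprocity used elsewhere in the paper).
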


\begin{proof}[Proof of Theorem~A]
By Corollary \ref{K_sim_K3} and the Swan-Jacobinski theorem, we have $K\oplus\Lambda\cong K(3)\oplus\Lambda$. We first consider $R(i)$ and set $r_i:=rk_{\mathbb{Z}}R(i)$. Lemma \ref{lem:tensor_stable_isomorphism}, together with
Proposition \ref{Ri_otimes_K__iso_Ri}, gives
\[
    \bigl(K(3)\otimes R(i)\bigr)\oplus\Lambda^{r_i}
    \cong
    \bigl(K\otimes R(i)\bigr)\oplus\Lambda^{r_i}
    \cong
    R(i)\oplus\Lambda^{2d+r_i}.
\]
Thus $K(3)\otimes R(i)$ and $R(i)\oplus\Lambda^{2d}$ belong to the same stable class and have the same $\mathbb{Z}$-rank. Since the stable class $[R(i)]$ is straight (Proposition \ref{Ri_are_straight}), there is a unique isomorphism class at
each level. Consequently, $K(3)\otimes R(i) \cong R(i)\oplus\Lambda^{2d}$, as required.

We next consider $K(i)$, setting $n_i:=rk_{\mathbb{Z}}K(i)$ and $b=2(p-d)$. By Lemma \ref{lem:tensor_stable_isomorphism}, 
Proposition \ref{K_otimes_Ki_iso_Ki_oplus_free} and Corollary \ref{K_sim_K3},
\[
    \bigl(K(3)\otimes K(i)\bigr)\oplus\Lambda^{n_i}
    \cong
    \bigl(K\otimes K(i)\bigr)\oplus\Lambda^{n_i}
    \cong
    K(i)\oplus\Lambda^{b+n_i}.
\]
It follows that $K(3)\otimes K(i)$ and $K(i)\oplus\Lambda^{b}$ belong to the same stable class and
have the same $\mathbb{Z}$-rank. Note that the stable class $[K(i)]$ is a fork (see Chapter 3 of \cite{Johnson2003}), and hence contains a
unique isomorphism class at every level apart from the bottom one. Both modules above lie above this level since $b=2(p-d)>0$, and so $K(3)\otimes K(i) \cong K(i)\oplus\Lambda^{2(p-d)}$, as required.
\end{proof}

Finally, note that in \cite{Johnson2021} (pp. 230-232), Johnson shows the construction of, amongst other things, an exact sequence of the form $0\to R(1)\to\Lambda\to K(3)\to 0$. Not only does this demonstrate the non-obvious fact that $K(3)$ is monogenic, but also the following straightforward consequence:
\begin{equation}
\label{K_stably_self_dual}
K\sim K^\ast.
\end{equation}

\section{$R(1)\otimes R(1)\sim K(1)$}
\label{sec:theorem_B_part_1}

In this section, we intend to show $\bar{I^\ast_C}\otimes\bar{I^\ast_C}\cong L\oplus\Lambda^{d-1}$ for some (yet to be defined) module $L\sim K(1)^\ast$. An obvious question is whether $K(1)^\ast\cong L$. At present it is unclear whether this is true with the key issue being whether or not $[K(1)^\ast]$ is straight.

To proceed, we first construct the free part of $\bar{I_C^\ast}\otimes\bar{I_C^\ast}$. Recall, $yxy^{-1}=x^\alpha$ and we take the Galois action, $\nu^i\cdot y=\nu^{\alpha^2i}$ and $\nu^i\cdot y^{-1}=\nu^{\alpha i}$. We first show the following:
\begin{equation}
\label{D1_initial}
V(r)+V(\alpha r)+V(\alpha^2 r)\cong\Lambda\text{ for }1\leq r\leq p-2
\end{equation}
where $V(r)=span_\mathbb{Z}\{\nu^{r+k}\otimes\nu^k\:|\:0\leq k\leq p-1\}$. Observe the slight abuse of notation whereby we are taking $\alpha r\:(mod\:p)$ and $\alpha^2 r\:(mod\:p)$ when defining $V(\alpha r)$ and $V(\alpha^2 r)$, respectively. Note, there will be two $V(i)$ `left over' which shall be discussed at the end of this section.

\begin{propn}
\label{representation_of_V0_x_action}
Set $V_r:=V(r)+V(\alpha r)+V(\alpha^2 r)$. The representation of $V_r$ with respect to the $x$-action on the defining basis of $V_r$ is the $3p\times 3p$ block matrix given by
\begin{equation*}
\rho_{V_r}(x^{-1})=
\begin{pmatrix}
\Psi&0&0\\
0&\Psi&0\\
0&0&\Psi
\end{pmatrix}
,
\qquad\text{ where }\qquad
\Psi_{ij}=
\begin{cases}
1,&i=1,\,j=p;\\
1,&j=i-1,\,2\leq i\leq p;\\
0,&\text{o/w.}
\end{cases}
\end{equation*}
\end{propn}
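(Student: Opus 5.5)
The plan is a direct computation of how $x$ acts on the defining basis of $V_r$. The natural ordered basis is the concatenation of the defining bases of $V(r)$, $V(\alpha r)$ and $V(\alpha^2 r)$: for $s\in\{r,\alpha r,\alpha^2 r\}$ (read mod $p$), take $e^{(s)}_k=\nu^{s+k}\otimes\nu^k$ for $0\leq k\leq p-1$, indexed as $k+1=1,\dots,p$. Since the action on $I_C^\ast\otimes I_C^\ast$ is diagonal and $x$ acts on $I_C^\ast$ by multiplication by $\nu$, we get
\[e^{(s)}_k\cdot x=\nu^{s+k+1}\otimes\nu^{k+1}=e^{(s)}_{k+1},\]
with indices read mod $p$. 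For the wrap-around at $k=p-1$ I will use $\nu^p=1$ in $I_C^\ast=\Lambda_0/(\Sigma)$, so that $\nu^{s+p}\otimes\nu^p=\nu^s\otimes 1=e^{(s)}_0$. Consequently $e^{(s)}_k\cdot x^{-1}=e^{(s)}_{k-1}$, again with indices mod $p$. In particular $x$ preserves each $V(s)$, so the matrix of $x^{-1}$ on $V_r$ is block diagonal, with one $p\times p$ block for each $s$.

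Within a block, write each matrix column as the coordinate vector of the image of a basis vector. Then $e_j\cdot x^{-1}=e_{j-1}$ for $2\leq j\leq p$, and $e_1\cdot x^{-1}=e_p$. Which index gets the $1$ depends on whether one uses column or row convention for a right action. With the row-vector convention natural for right modules (rows record images), row $i$ has its $1$ in column $i-1$ for $i\geq 2$, and row $1$ has its $1$ in column $p$. This is exactly $\Psi$. I will fix that convention explicitly and check it on the entry $(1,p)$.

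The step needing care is confirming that the three blocks really are distinct and that the union of their bases is linearly independent, so that this is a genuine $3p\times 3p$ representation on a basis. Distinctness: since $\alpha$ has order $3$ mod $p$ and $r\not\equiv 0$, the residues $r,\alpha r,\alpha^2 r$ are distinct. We also need them all in $\{1,\dots,p-2\}$, i.e.\ none equal to $p-1$. This holds because $-1$ lies in the orbit $\{r,\alpha r,\alpha^2r\}$ only if that orbit is $\{-1,-\alpha,-\alpha^2\}$, and that is the orbit excluded in the construction, the "left over" $V(i)$. Given distinctness, linear independence follows from the directness of $V(1)\oplus\cdots\oplus V(p-2)$ established in Section~\ref{sec:preliminaries_1}, together with Proposition~\ref{Vr-iso-Lambda}, which shows each $V(s)$ has the stated $p$ elements as a $\mathbb{Z}$-basis. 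The remainder is routine bookkeeping.
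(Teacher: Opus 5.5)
Your proposal is correct and follows the paper's proof. Both compute $e_i\cdot x=e_{i+1}$ in each of the three blocks, with the wrap-around $e_p\cdot x=e_1$ coming from $\nu^p=1$, and read off $\Psi$. The paper records columns as images under $x$ and labels the result $\rho(x^{-1})$; for a permutation matrix this coincides with your row convention for $x^{-1}$. Your extra checks are sound and are simply taken for granted in the paper: that $r,\alpha r,\alpha^2 r$ are distinct, that none of them is $p-1$, and that the $3p$ elements are linearly independent because $V(1)\oplus\cdots\oplus V(p-2)$ is direct.
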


\begin{proof}
Denote the $\mathbb{Z}$-bases of $V(r),\, V(\alpha r),\,V(\alpha^2 r)$ by $\{e_i\:|\:1\leq i\leq p\},\, \{e_{i+p}\:|\:1\leq i\leq p\}$ and $\{e_{i+2p}\:|\:1\leq i\leq p\}$ respectively, where $e_i=\nu^{r+i-1}\otimes \nu^{i-1}$, $e_{p+i}=\nu^{\alpha r+i-1}\otimes\nu^{i-1}$ and $e_{2p+i}=\nu^{\alpha^2 r+i-1}\otimes\nu^{i-1}$. It is clear that $e_i\cdot x=e_{i+1}$ for $1\leq i\leq p-1$, and $e_p\cdot x=e_1$. Likewise, $e_{p+i}\cdot x=e_{p+i+1}$ for $1\leq i\leq p-1$, and $e_{2p}\cdot x=e_{p+1}$. Finally, $e_{2p+i}\cdot x=e_{2p+i+1}$ for $1\leq i\leq p-1$, and $e_{3p}\cdot x=e_{2p+1}$. The result now follows.
\end{proof}

\begin{propn}
\label{representation_of_Vr}
The representation of $V_r$ with respect to the action of $y^2$ is given by
\[
\rho_{V_r}(y)=
\begin{pmatrix}
0&0&\Phi^T\\
\Phi^T&0&0\\
0&\Phi^T&0
\end{pmatrix}
\qquad\text{ where }\qquad
\Phi^T_{ij}=
\begin{cases}
1,&i=(j-1)\alpha+1\:(mod\:p);\\
0,&\text{o/w.}
\end{cases}
\]
\end{propn}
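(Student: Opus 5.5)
The plan is a direct computation of how $y^{-1}=y^{2}$ acts on the defining basis $\{e_1,\ldots,e_{3p}\}$ of $V_r$ fixed in the proof of Proposition \ref{representation_of_V0_x_action}. In that basis the images of $e_j$ are recorded in column $j$. The statement labels the matrix $\rho_{V_r}(y)$ but speaks of the action of $y^2$; I will compute the action of $y^{2}=y^{-1}$ and show the stated pattern appears. The only input is the Galois action fixed above, $\nu^i\cdot y^{-1}=\nu^{\alpha i}$. This action is induced by the ring automorphism $\nu\mapsto\nu^{\alpha}$ of $I_C^\ast=\Lambda_0/(\Sigma)$, and it acts diagonally on $I_C^\ast\otimes I_C^\ast$. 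So it sends $\nu^a\otimes\nu^b$ to $\nu^{\alpha a}\otimes\nu^{\alpha b}$ for all exponents, read mod $p$. This holds even when an exponent equals $p-1$, since $V(r)$ is spanned by these symbols and not only by the basis $\{\nu^0,\ldots,\nu^{p-2}\}$.

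First, for the block $V(r)$, take $e_j=\nu^{r+j-1}\otimes\nu^{j-1}$ with $1\le j\le p$. Then
\[
e_j\cdot y^{-1}=\nu^{\alpha r+\alpha(j-1)}\otimes\nu^{\alpha(j-1)}.
\]
Put $k\equiv\alpha(j-1)\pmod p$ with $0\le k\le p-1$. The image is $\nu^{\alpha r+k}\otimes\nu^{k}$, which is the basis element $e_{p+k+1}$ of $V(\alpha r)$. So $e_j\mapsto e_{p+i}$ with $i\equiv(j-1)\alpha+1\pmod p$. This is exactly the rule defining $\Phi^T_{ij}$, and it places $\Phi^T$ in block position $(2,1)$.

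The identical computation with $\alpha r$ in place of $r$ sends $V(\alpha r)$ to $V(\alpha^2 r)$ by the same permutation, giving $\Phi^T$ in block $(3,2)$. Applying it once more sends $V(\alpha^2 r)$ to $V(\alpha^3 r)=V(r)$, since $\alpha^3\equiv1\pmod p$; this gives $\Phi^T$ in block $(1,3)$. All other blocks are zero.

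I would also note that $j\mapsto(j-1)\alpha+1$ is a bijection of $\mathbb{Z}/p$, because $\alpha$ is invertible mod $p$. Hence $\Phi^T$ is a genuine permutation matrix, and the full $3p\times3p$ matrix is well defined and invertible. Its cube is the identity, since $\alpha^3\equiv1$, which is consistent with $y^3=1$.

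The main obstacle is not the algebra but keeping conventions straight. The paper writes right actions, with images recorded in columns. It also has to reduce $\alpha r$ and $\alpha^2 r$ mod $p$ and identify $V(\alpha r)$ with the appropriately indexed block. Finally, one must check whether $\alpha$ or $\alpha^2$ appears, that is, whether the matrix represents $y$ or $y^{2}$. I would settle this by checking against Proposition \ref{representation_of_V0_x_action}: the matrix $\Psi$ there represents $x^{-1}$ in the same column convention. If instead one uses the action of $y$, the same argument gives blocks at $(3,1)$, $(1,2)$, $(2,3)$ with the rule $i\equiv(j-1)\alpha^2+1$. This is the transpose of the stated matrix, as expected for the inverse of a permutation matrix.
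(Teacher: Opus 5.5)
Your computation is the same as the paper's. You evaluate $e_i y^{-1}=\nu^{\alpha r+\alpha(i-1)}\otimes\nu^{\alpha(i-1)}=e_{p+\alpha(i-1)+1}$, record images in columns to obtain the $\Phi^T$ blocks, and use $\alpha^3\equiv 1\pmod p$ to close the cycle $V(r)\to V(\alpha r)\to V(\alpha^2 r)\to V(r)$.

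One small slip in your remark on conventions does not affect the argument. With images recorded in columns, $\Psi$ sends $e_j\mapsto e_{j+1}$, so it records $x$, not $x^{-1}$. The paper's labels $\rho(x^{-1})$ and $\rho(y)$ are the row-convention readings of the same two matrices.
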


\begin{proof}
With the $e_i,\,e_{p+i},\,e_{2p+i}$ as before, observe $e_i y^{-1}=\nu^{\alpha r+\alpha(i-1)}\otimes\nu^{\alpha(i-1)}$ where we have $\alpha r+\alpha(i-1)-\alpha(i-1)=\alpha r$ and $0\leq \alpha(i-1)\:(mod\:p)\leq p-1$. Thus, $e_i y^{-1}=e_{p+\alpha(i-1)+1}$. This gives us a $p\times p$ block which we label $\Phi^T$. Likewise, $e_{p+j}y^{-1}=e_{2p+\alpha(j-1)+1}$ and $e_{2p+k}y^{-1}=e_{\alpha(k-1)+1}$, where the last equality follows from $\alpha^3\equiv 1\:(mod\:p)$. It is straightforward to see that we get two more (identical) $p\times p$ blocks which are arranged as claimed.
\end{proof}

\begin{propn}
\label{regular_rep_x}
The regular representation of the $x$-action is given by $\rho_{reg}(x^{-1})=\rho_{V_r}(x^{-1})$.
\end{propn}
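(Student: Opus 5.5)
We need to show that, for a suitable ordering of a $\mathbb{Z}$-basis of $\Lambda$, the matrix of right multiplication by $x^{-1}$ is exactly the block matrix $\rho_{V_r}(x^{-1})=\mathrm{diag}(\Psi,\Psi,\Psi)$ of Proposition \ref{representation_of_V0_x_action}. The plan is to order the standard basis of $\Lambda$ according to the right cosets of $C_p$ in $G(p,3)$. Every element of $G(p,3)$ can be written uniquely as $y^sx^k$ with $0\leq s\leq 2$ and $0\leq k\leq p-1$. We therefore set
\[f_{sp+k+1}=y^sx^{k},\qquad 0\leq s\leq 2,\;0\leq k\leq p-1,\]
so that the first $p$ basis vectors span $\mathbb{Z}[C_p]$, the next $p$ span $y\,\mathbb{Z}[C_p]$, and the last $p$ span $y^2\mathbb{Z}[C_p]$. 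Each block is stable under right multiplication by $x$, since $y^sx^k\cdot x=y^sx^{k+1}$.

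Next we compute the action directly. For $1\leq i\leq p-1$ we have $f_{sp+i}\cdot x=f_{sp+i+1}$, and $f_{sp+p}\cdot x=y^sx^p=y^s=f_{sp+1}$. This is precisely the rule found for the $e_i$ in the proof of Proposition \ref{representation_of_V0_x_action}: inside each block, $x$ acts as the cyclic shift $f\mapsto$ next, wrapping from the last vector back to the first. The transformations are therefore identical, so they have the same matrix. In particular $\rho_{reg}(x^{-1})$ is the inverse shift on each block, which coincides with $\mathrm{diag}(\Psi,\Psi,\Psi)$ under the same convention (row/column, action on the right) used in Proposition \ref{representation_of_V0_x_action}.

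The only point needing care is the bookkeeping of conventions. The paper writes the matrix of $x^{-1}$ using the entries $\Psi_{1p}=1$ and $\Psi_{i,i-1}=1$, and I would check that these come from the identical computation for both modules rather than re-deriving a transpose. Since both bases satisfy literally the same relations, $e_{sp+i}\mapsto e_{sp+i+1}$ and $f_{sp+i}\mapsto f_{sp+i+1}$ with the same wrap-around, any convention applied to one yields the same matrix for the other. The statement therefore follows once the coset-ordered basis is fixed.

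Finally, I would remark that this choice of basis $f_j$ is the one to carry forward. With it, the next step (comparing $\rho_{reg}(y)$ with $\rho_{V_r}(y)$ from Proposition \ref{representation_of_Vr}, and thereby proving \eqref{D1_initial}) requires only examining how $y$ permutes the cosets $y^s\mathbb{Z}[C_p]$, using $x^ky=yx^{\alpha^2k}$ or its analogue.
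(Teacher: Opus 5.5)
Your proposal is correct and is essentially the paper's proof. The paper uses the same coset-ordered basis $f_i=x^{i-1}$, $f_{p+j}=yx^{j-1}$, $f_{2p+k}=y^2x^{k-1}$. It observes that $x$ cyclically shifts each block, wrapping $y^sx^{p-1}$ to $y^s$, exactly as it shifts the $e_i$ in Proposition \ref{representation_of_V0_x_action}.
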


\begin{proof}
Set $f_i=x^{i-1}$, $f_{p+j}=yx^{j-1}$, $f_{2p+k}=y^2x^{k-1}$ where $1\leq i,\,j,\,k\leq p$. Then $f_i\cdot x=f_{i+1}$ for $1\leq i\leq p-1$, and $f_p\cdot x=f_1$. Similarly, $f_{p+j}\cdot x=f_{p+j+1}$ for $1\leq i\leq p-1$, and $f_{2p}\cdot x=f_{p+1}$. Finally, $f_{2p+k}\cdot x=f_{2p+k+1}$ for $1\leq i\leq p-1$, and $f_{3p}\cdot x=f_{2p+1}$.
\end{proof}

\begin{propn}
\label{regular_rep}
The regular representation of the $y^{-1}$-action is given by
\begin{equation*}
\rho_{reg}(y)=
\begin{pmatrix}
0&\Phi^T&0\\
0&0&\Phi^T\\
\Phi^T&0&0
\end{pmatrix}
\end{equation*}
\end{propn}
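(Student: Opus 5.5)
We use the same ordered basis as in the proof of Proposition \ref{regular_rep_x}: $f_i=x^{i-1}$, $f_{p+j}=yx^{j-1}$, $f_{2p+k}=y^2x^{k-1}$ for $1\leq i,j,k\leq p$. The plan is to compute the right action of $y$ (equivalently $y^{-1}$, as in Proposition \ref{representation_of_Vr}) on each basis element, push the $y$ to the left using the defining relation, and read off which block of $\Phi^T$ each image lands in. Since $yxy^{-1}=x^\alpha$, we have $x^{m}y^{-1}=y^{-1}x^{\alpha m}$ (equivalently $x^m y = y x^{\alpha^2 m}$, indices mod $p$). We will fix the convention for $y$ versus $y^{-1}$ to match Proposition \ref{representation_of_Vr}. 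The target is a formula of the shape $f_i\cdot y^{-1}=f_{\sigma(i)}$, with the $C_p$-index transformed by $m\mapsto\alpha m$.

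The key steps, in order, are as follows.

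\begin{itemize}
\item[(1)] For the first block, $f_i y^{-1}=x^{i-1}y^{-1}=y^{-1}x^{\alpha(i-1)}=y^2x^{\alpha(i-1)}$. This is $f_{2p+\alpha(i-1)+1}$, with $\alpha(i-1)$ reduced mod $p$ into $\{0,\dots,p-1\}$. So the first block of basis vectors maps into the third block, via the permutation $i\mapsto \alpha(i-1)+1$, which is exactly the permutation encoded by $\Phi^T$ in Proposition \ref{representation_of_Vr}.
\item[(2)] For the second block, $f_{p+j}y^{-1}=yx^{j-1}y^{-1}=y\,y^{-1}x^{\alpha(j-1)}=x^{\alpha(j-1)}=f_{\alpha(j-1)+1}$. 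So the second block maps into the first.
\item[(3)] For the third block, $f_{2p+k}y^{-1}=y^2x^{\alpha(k-1)}y^{\,0}\cdot y^{-1}$ simplifies to $y\,x^{\alpha(k-1)}$, which is $f_{p+\alpha(k-1)+1}$. So the third block maps into the second.
\end{itemize}

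Assembling the matrix in the paper's row/column convention (the same as in Proposition \ref{representation_of_Vr}, where the entry records the image index) gives the cyclic block pattern with $\Phi^T$ placed in positions $(1,2),(2,3),(3,1)$. This is the transpose-cyclic arrangement relative to $\rho_{V_r}$, and it comes from the fact that in $\Lambda$ the element $y^{-1}$ permutes the cosets $C_p, yC_p, y^2C_p$ in the opposite cyclic order to the one in which $y^{-1}$ cycles $V(r)\to V(\alpha r)\to V(\alpha^2 r)$.

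The main obstacle is purely one of conventions. We must decide whether the displayed matrix is the action of $y$ or of $y^{-1}$ (the statement says ``$y^{-1}$-action'' but writes $\rho_{reg}(y)$), whether columns or rows record images, and whether the block positions of $\Phi^T$ come out exactly as stated rather than as their transposes. To settle this, we will first check the case $p=7$, $\alpha=2$ by hand on a single basis vector in each block. We will then adopt whichever reading makes Proposition \ref{representation_of_Vr} consistent, so that the subsequent comparison of $\rho_{reg}$ with $\rho_{V_r}$, which presumably yields the isomorphism \eqref{D1_initial} via a block permutation, uses identical conventions.
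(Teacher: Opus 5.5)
Your proposal is correct and follows the paper's proof. Like the paper, you compute $y^{-1}=y^2$ on $f_i,f_{p+j},f_{2p+k}$ via $x^m y^{-1}=y^{-1}x^{\alpha m}$, obtaining the block maps $1\to 3$, $2\to 1$, $3\to 2$, each through $i\mapsto \alpha(i-1)+1$ and recorded column-wise exactly as in Proposition~\ref{representation_of_Vr}, so your computation already settles the convention question and no separate $p=7$ check is needed. The only slip is the garbled intermediate expression in step (3), which should read $y^2x^{k-1}y^{-1}=y\,x^{\alpha(k-1)}$; the conclusion you draw there is correct.
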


\begin{proof}
With $f_i,\,f_{p+j},\,f_{2p+k}$ as above, it is clear that $f_i y^{-1}=x^{i-1}y^2=y^2 x^{\alpha(i-1)}=f_{2p+\alpha(i-1)+1}$. Letting $i$ vary over $\{1,\ldots,\,p\}$ gives us the block $\Phi^T$. A similar argument applies to the action of $y^2$ on $f_{p+j}$ and $f_{2p+k}$, each generating the $p\times p$ block $\Phi^T$, arranged as claimed.
\end{proof}

\begin{propn}
$V(r)+V(\alpha r)+V(\alpha^2 r)\cong\Lambda$ for $1\leq r\leq p-2$.
\end{propn}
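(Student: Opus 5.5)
The plan is to exhibit $V_r=V(r)+V(\alpha r)+V(\alpha^2 r)$ as a free cyclic $\Lambda$-module on the single generator $e_1=\nu^r\otimes 1$. I would do this by matching bases with the regular representation, so that the matrix computations in Propositions \ref{representation_of_V0_x_action}--\ref{regular_rep} do the real work.

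First I check that $V_r$ is a $\Lambda$-submodule of $I_C^\ast\otimes I_C^\ast$ and that the sum is direct. The exponents $r,\,\alpha r,\,\alpha^2 r$ are distinct mod $p$, since $\alpha$ has order $3$ and $r\not\equiv 0$. Hence, by the directness of $V=V(1)\oplus\cdots\oplus V(p-2)$ noted in Section \ref{sec:preliminaries_1}, the three summands are independent. There is one subtlety: $\alpha r$ or $\alpha^2 r$ might be congruent to $p-1$. In that case I interpret $V(p-1)$ via $\nu^{p-1+k}\otimes\nu^k=\nu^{-1+k}\otimes\nu^k$, and I use that the $x$-orbit of $\nu^{s}\otimes 1$ has exactly $p$ linearly independent elements for each $s\not\equiv0$. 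This is the footnote independence fact applied to $(\nu^{s}-1)$-type combinations, together with Proposition \ref{Vr-iso-Lambda}. Closure under $x$ is immediate. Closure under $y$ follows from the computation in Proposition \ref{representation_of_Vr}: $y^{-1}$ sends $\nu^{s+k}\otimes\nu^k$ to $\nu^{\alpha s+\alpha k}\otimes\nu^{\alpha k}$, which permutes the three $V$'s cyclically.

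Next, the ordered basis $e_1,\dots,e_{3p}$ of $V_r$ and the ordered basis $f_1,\dots,f_{3p}$ of $\Lambda$ (with $f_i=x^{i-1}$, $f_{p+j}=yx^{j-1}$, $f_{2p+k}=y^2x^{k-1}$) give representations of $x$ that agree, by Proposition \ref{regular_rep_x}. The $y$-representations are block-circulant with the same block $\Phi^T$, but the block permutation differs: Proposition \ref{representation_of_Vr} has the pattern $1\to2\to3\to1$, whereas Proposition \ref{regular_rep} has $1\to3\to2\to1$. I therefore reorder the blocks, matching $e_{p+j}$ with $f_{2p+j}$ and $e_{2p+k}$ with $f_{p+k}$, and keep the $x$-blocks, which are identical $\Psi$'s, unchanged. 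Equivalently, I define $\phi:\Lambda\to V_r$ by $\phi(1)=e_1$ and check directly that $\phi(y^{2}x^{j})=e_1y^2x^j=e_{p+\dots}$, and so on.

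Under this permuted identification the matrices of $x^{\pm1}$ and $y^{\pm1}$ coincide. The $\mathbb{Z}$-linear bijection sending basis to basis is therefore $G$-equivariant on generators, hence a $\Lambda$-isomorphism $\Lambda\cong V_r$. The only real obstacle is bookkeeping. I need the conventions for $y$ versus $y^{-1}$ and the $\alpha$ versus $\alpha^2$ Galois action to line up exactly, so that the block permutation is a genuine conjugation compatible with $\Psi$, and I need to handle the boundary case of exponent $p-1$. I would first verify the $p=7$, $\alpha=2$ case by hand to fix the conventions.
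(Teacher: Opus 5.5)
Your proposal is correct and is essentially the paper's proof. The paper also intertwines the $x$- and $y$-matrices of $V_r$ with those of the regular representation by a block permutation; its block-antidiagonal $X$ swaps blocks $1$ and $3$, whereas you swap blocks $2$ and $3$ (equivalently, you use $\lambda\mapsto e_1\lambda$). The difference is immaterial, since any transposition conjugates the $3$-cycle $1\to2\to3\to1$ to its inverse.

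The one loose end is your boundary case. If $\alpha r\equiv p-1$, then $V(p-1)$ is not a summand of $V$, so directness of the three-term sum does not follow from Section \ref{sec:preliminaries_1}. It needs a separate check; it does hold, e.g.\ by diagonalising over $\mathbb{C}$. This case is exactly the leftover orbit $\{\alpha+1,\alpha^2+1,p-1\}$, which the paper never uses and on which its own proof is equally silent.
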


\begin{proof}
It suffices to show there exists an invertible $(3p)\times (3p)$ matrix $X$ such that $\rho_{reg}(g)X=X\rho_{V_r}(g)$ for all $g\in G(p,\,3)$. For the $p\times p$ identity matrix $I_p$ , set
\begin{equation*}
X=
\begin{pmatrix}
0&0&I_p\\
0&I_p&0\\
I_p&0&0
\end{pmatrix}
\end{equation*}
and observe
\[
\rho_{reg}(x^{-1})X=
\begin{pmatrix}
0&0&\Psi\\
0&\Psi&0\\
\Psi&0&0
\end{pmatrix}
=X\rho_{V_r}(x^{-1}).\]
and
\[
\rho_{reg}(y)X=
\begin{pmatrix}
0&\Phi^T&0\\
\Phi^T&0&0\\
0&0&\Phi^T
\end{pmatrix}
=X\rho_{V_r}(y).\]
Finally, $X$ is clearly invertible as a series of row permutations transform $X$ into $I_{3p}$.
\end{proof}

Since $3|p-1$, we get $d-1$ copies of $\Lambda$ from $V(1)+\cdots +V(p-2)$ with two of the $V(r)$'s left over (call them $V(s),\,V(t)$). These two $V(r)$'s along with $T$ will represent a module stably isomorphic to $K(1)^\ast$. With this in mind, write $V^\prime$ for the sum of the $V(i)$ \textit{without} $V(s)$ and $V(t)$, i.e. $V^\prime\cong\Lambda^{d-1}$. Now let $\eta:\bar{I^\ast_C}\otimes\bar{I^\ast_C}\to (\bar{I^\ast_C}\otimes\bar{I^\ast_C})/V^\prime$ be the natural map and write this quotient\footnote{Note the abuse of notation whereby we write $T$ for the $\mathbb{Z}$-span of the element $T$.} as $\eta(T+V(s)+V(t))$. In particular, we have the following:

\begin{propn}
\label{I_otimes_I_iso_L_free}
$\bar{I^\ast_C}\otimes\bar{I^\ast_C}\cong \eta(T+V(\alpha+1)+V(\alpha^2+1))\oplus\Lambda^{d-1}.$
\end{propn}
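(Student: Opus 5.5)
We need $\bar{I^\ast_C}\otimes\bar{I^\ast_C}\cong \eta(T+V(\alpha+1)+V(\alpha^2+1))\oplus\Lambda^{d-1}$, where $\eta$ is the quotient by $V'\cong\Lambda^{d-1}$. The plan has three stages: identify the two leftover indices, show $V'$ is a $\Lambda$-submodule isomorphic to $\Lambda^{d-1}$, and then split off $V'$ using injectivity of free lattices.

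\textbf{Step 1: identify $V(s),\,V(t)$.} The indices $r\in\{1,\ldots,p-2\}$ are residues mod $p$. Multiplication by $\alpha$ permutes the nonzero residues in orbits $\{r,\alpha r,\alpha^2 r\}$ of size $3$, and there are $(p-1)/3=d$ such orbits. The range $1\le r\le p-2$ omits only $r=p-1\equiv -1$. The $\alpha$-orbit of $-1$ is $\{-1,-\alpha,-\alpha^2\}$. Since $1+\alpha+\alpha^2\equiv 0 \pmod p$ (as $\alpha\neq 1$ and $\alpha^3=1$), we have $-\alpha\equiv 1+\alpha^2$ and $-\alpha^2\equiv 1+\alpha$.

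So removing $r=-1$ leaves exactly one incomplete orbit, $\{\alpha+1,\alpha^2+1\}$, and $d-1$ complete orbits. This identifies $s=\alpha+1$ and $t=\alpha^2+1$.

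\textbf{Step 2: $V'\cong\Lambda^{d-1}$ as a $\Lambda$-submodule.} Each complete orbit gives $V_r=V(r)+V(\alpha r)+V(\alpha^2 r)$. By the proof of Proposition \ref{representation_of_Vr}, $y^{-1}$ maps the basis of $V(r)$ onto that of $V(\alpha r)$, so each $V_r$ is $y$-stable. It is also $x$-stable, and it is isomorphic to $\Lambda$ by the preceding proposition.

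The sum of all the $V(i)$ is direct (Section \ref{sec:preliminaries_1}), so $V'=\bigoplus V_r$ over the $d-1$ complete orbits is a $\Lambda$-submodule isomorphic to $\Lambda^{d-1}$. One should check that the $y$-action on $\bar{I^\ast_C}\otimes\bar{I^\ast_C}$ is the diagonal one, which is what the computation $e_iy^{-1}=\nu^{\alpha r+\alpha(i-1)}\otimes\nu^{\alpha(i-1)}$ uses.

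\textbf{Step 3: split off $V'$.} Consider the exact sequence $0\to V'\to \bar{I^\ast_C}\otimes\bar{I^\ast_C}\stackrel{\eta}{\to} Q\to 0$. To stay in the category of lattices we need $Q$ torsion free. From Section \ref{sec:preliminaries_1}, $\{\nu^{r+k}\otimes\nu^k\}\cup\{T\text{ or }\nu^{p-2}\otimes\nu^{p-2}\}$ is a $\mathbb{Z}$-basis of the whole tensor product. Hence $V'$ is spanned by part of a $\mathbb{Z}$-basis, so $V'$ is a $\mathbb{Z}$-direct summand and $Q$ is a lattice with $\mathbb{Z}$-basis given by the images of the remaining basis vectors. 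Those images are the images of $V(s)$, $V(t)$ and the generator $T$ of the rank-one complement, so $Q=\eta(T+V(s)+V(t))$.

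Since $V'$ is free, it is injective in the category of $\Lambda$-lattices (Theorem 28.5 of \cite{Johnson2003}). The sequence therefore splits, giving $\bar{I^\ast_C}\otimes\bar{I^\ast_C}\cong Q\oplus\Lambda^{d-1}$.

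\textbf{Main obstacle.} The delicate point is the basis claim in Step 3: that the chosen basis genuinely contains $V'$'s basis together with $V(s)$, $V(t)$ and a single extra vector. This is the elementary basis transformation asserted in Section \ref{sec:preliminaries_1}, which I would re-verify by ordering the $\nu^i\otimes\nu^j$ by the difference $i-j \bmod p$. The remaining checks are the orbit arithmetic in Step 1 and the $y$-stability in Step 2.
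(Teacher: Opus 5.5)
Your proposal is correct and follows the paper's proof. You identify $V(\alpha+1)$ and $V(\alpha^2+1)$ as the incomplete $\alpha$-orbit, making the paper's ``it is clear'' explicit via the missing residue $-1$, and you then split off the free $\Lambda$-submodule $V'\cong\Lambda^{d-1}$ from a short exact sequence of lattices. The one difference is that you deduce torsion-freeness of the quotient from the Section 3 basis, so that $V'$ is a $\mathbb{Z}$-direct summand, whereas the paper cites Proposition 3.1 of \cite{Johnson2015}; your version is more self-contained, and the basis claim it relies on does hold.
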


\begin{proof}
Since $p|\alpha^3-1$ it follows that $p|\alpha^2+\alpha+1$. It is therefore clear that the only possible choices for the aforementioned $s$ and $t$ are $\alpha+1$ and $\alpha^2+1\equiv p-\alpha$, i.e. $T+ V(\alpha+1)+ V(\alpha^2+1)$ is left over when we introduce the $y$-action to $T\oplus V$ (thought of as a $\Lambda_0$-module).

Now, $(\bar{I^\ast_C}\otimes\bar{I^\ast_C})/V^\prime$ is torsion free (Proposition 3.1 of \cite{Johnson2015}) and hence a lattice. We therefore have the following split short exact sequence, $0\to V^\prime\to \bar{I^\ast_C}\otimes\bar{I^\ast_C}\to\eta(T+V(\alpha+1)+V(\alpha^2+1))\to 0$ since $V^\prime\cong\Lambda^{d-1}$. The result now follows
\end{proof}

\begin{propn}
\label{K(1)_sim_L_dual}
Set $L=\eta(T+ V(\alpha+1)+V(\alpha^2+1))$. We then have $K(1)\sim L^\ast$.
\end{propn}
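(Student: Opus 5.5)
Since $L=\eta(T+V(\alpha+1)+V(\alpha^2+1))$ is a quotient of $\bar{I^\ast_C}\otimes\bar{I^\ast_C}$, I plan to show that $L$ fits into an exact sequence $0\to R(1)^\ast\to\Lambda\to L\to 0$, or dually $0\to L^\ast\to\Lambda\to R(1)\to 0$. Schanuel's Lemma applied against $\mathcal{S}(1)=(0\to K(1)\to\Lambda\to R(1)\to 0)$ then gives $K(1)\sim L^\ast$. By (\ref{duality_of_Ri}) and (\ref{R_i_iso_to_I_C}), $R(1)^\ast\cong R(3)\cong\bar{I_C^\ast}$. So the concrete target is a short exact sequence
\[0\to \bar{I^\ast_C}\to \Lambda\to L\to 0,\]
which I will then dualise. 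This uses the fact that the duals of short exact sequences of lattices remain exact, together with $\Lambda^\ast\cong\Lambda$ and $\bar{I_C^\ast}^\ast\cong R(1)$.

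The ranks are consistent with this. We have $rk_\mathbb{Z}(L)=2p+1$, since $T$ contributes $1$ and $V(s),V(t)$ contribute $p$ each. Also $3p-(p-1)=2p+1$.

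\textbf{Step 1: $L$ is cyclic.} I will show that $L$ is cyclic over $\Lambda$, generated by $\eta(e)$ with $e=\nu^{\alpha+1}\otimes 1\in V(\alpha+1)$. The orbit of $e$ under $x$ spans $V(\alpha+1)$. Applying $y$ sends $V(r)$ to $V(\alpha^2 r)$, and $\alpha^2(\alpha+1)=\alpha^3+\alpha^2\equiv 1+\alpha^2$, so $\{V(\alpha+1),V(\alpha^2+1)\}$ is a $y$-orbit of size two only modulo $V'$ and $T$. More precisely, $\alpha^2(\alpha^2+1)\equiv\alpha+\alpha^2\equiv-1$, so $V(-1)=V(p-1)$ appears. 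Now $V(p-1)$ is not among the $V(r)$ with $1\le r\le p-2$; it is $\mathrm{span}\{\nu^{k-1}\otimes\nu^k\}$, which must be rewritten in the basis $\{V(r)\}\cup\{T\}$. I expect this to produce $T$ modulo $V'+V(s)+V(t)$. This makes $\Lambda\to L$, $\lambda\mapsto\eta(e)\lambda$, surjective, and that is the point to verify by a direct coordinate computation using the relation $\nu^{p-1}=-\sum_{i<p-1}\nu^i$.

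\textbf{Step 2: identify the kernel.} Let $N$ be the kernel of $\Lambda\to L$; it is a lattice of rank $p-1$. I will check the criteria of Proposition \ref{Criteria_for_dual_augmentation_ideal}. For condition (III), note that $\Sigma_x$ acts on $L$: on $\eta(V(s))$ and $\eta(V(t))$ it gives the orbit sums $\sum_k\nu^{r+k}\otimes\nu^k$, which are $x$-fixed. I aim to find an element $\mu\in N$ with $\mu y=\mu$ whose $x$-translates span $N$. A natural guess is $\mu$ built from $(1+y+y^2)$ times a suitable element of $[x-1)$ that kills $e$. Alternatively, I could dualise first and show that the kernel of $\Lambda\to L^\ast$ (if that is more natural) is $\bar{I_C}\cong R(1)$, via the embedding of $R(1)^\ast$.

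\textbf{Main obstacle.} If Step 2 proves awkward, I will instead argue rationally and locally. $N_\mathbb{Q}\cong(\bar{I_C^\ast})_\mathbb{Q}$ by the character count, since $L_\mathbb{Q}\oplus(I_C)_\mathbb{Q}\cong\Lambda_\mathbb{Q}$. Then $N$ is coinduced, and Johnson's classification of such lattices via $\mathcal{T}_3(A,\pi)$ (cf.\ (\ref{iso_over_Lambda_iff_iso_over_Tq})) would be used to pin $N$ down as $R(3)$. I expect Step 2, the explicit bookkeeping of $T$ and $V(p-1)$ inside $L$, to be the main difficulty.
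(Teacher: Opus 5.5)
Your route is different from the paper's. The paper never computes inside $L$. It tensors $0\to\bar{I}_C\oplus[y-1)\to\Lambda\to\mathbb{Z}\to0$ with $\bar{I}_C$, and uses $\bar{I}_C\otimes[y-1)\cong\Lambda^{2d}$ together with the dual of $\bar{I^\ast_C}\otimes\bar{I^\ast_C}\cong L\oplus\Lambda^{d-1}$ to obtain $0\to L^\ast\oplus\Lambda^{3d-1}\to\Lambda^{3d}\to\bar{I}_C\to0$. It then invokes the destabilization theorem and SFC for $\Lambda$ to reach $0\to L^\ast\to\Lambda\to\bar{I}_C\to0$, and applies Schanuel.

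Your Step 1 is correct. We have $ey=\nu^{\alpha^2+1}\otimes1$ and $ey^2=\nu^{-1}\otimes1=-(1\otimes1)-\sum_{i=1}^{p-2}\nu^i\otimes1$. Moreover, $1\otimes1$ has the same image in the trivial $\Lambda_0$-quotient $(I_C^\ast\otimes I_C^\ast)/V\cong\mathbb{Z}$ as $\nu^{p-2}\otimes\nu^{p-2}=(1\otimes1)x^{p-2}$. So $\eta(ey^2)$ maps to a generator of that quotient, and $\eta(e)$ generates $L$.

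\textbf{The gap is Step 2.} It carries all the content, and you leave it as a guess. Your fallback would not close it. Rational and local data cannot identify $N$: $R(1),R(2),R(3)$ are non-isomorphic coinduced lattices with isomorphic rationalisations. Moreover, when $Cl(A)\neq0$ there are coinduced lattices locally isomorphic to $R(3)$ but not isomorphic to it. This is exactly the class-group issue the paper is trying to sidestep. Your ``dualise first'' variant is also mis-stated: in the dual sequence $L^\ast$ is the kernel of $\Lambda\to\bar{I}_C$, not a cokernel.

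\textbf{The missing ingredient} is the identity
\[(\nu^r\otimes1)(1-x^{-r})=\nu^r\otimes1-1\otimes\nu^{-r}\in V(r),\qquad \sum_{r=1}^{p-2}(\nu^r\otimes1)(1-x^{-r})=1\otimes\nu-\nu^{-1}\otimes1=(ey^2)(x-1).\]
Write $\lambda=a+yb+y^2c$ with $a,b,c\in\Lambda_0$. The image of $\eta(e)\lambda$ in $(I_C^\ast\otimes I_C^\ast)/V$ is $-\epsilon(c)$ times the class of $T$, so $\lambda\in N$ forces $c=(x-1)c'$. The identity, together with $-(\alpha+1)\equiv\alpha^2$ and $-(\alpha^2+1)\equiv\alpha$, then gives
\[\eta(e)\lambda=\eta\bigl(e[a+(1-x^{\alpha^2})c']\bigr)+\eta\bigl(ey[b+(1-x^{\alpha})c']\bigr).\]
Since $V(\alpha+1)=e\Lambda_0$ and $V(\alpha^2+1)=ey\Lambda_0$ are free and $\eta$ is injective on their sum, $N=\mu\Lambda_0$ with
\[\mu=(x^{\alpha^2}-1)+y(x^{\alpha}-1)+y^2(x-1)=(x^{\alpha^2}-1)(1+y+y^2).\]
This $\mu$ is $y$-fixed, and $\mu\Sigma_x=0$ because $\Sigma_x$ is central. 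Also $rk_{\mathbb{Z}}(N)=p-1$. So Proposition \ref{Criteria_for_dual_augmentation_ideal} gives $N\cong\bar{I_C^\ast}$. Your guess of ``$(1+y+y^2)$ times an element of $[x-1)$'' is exactly right.

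With this supplied, your proof is complete. It avoids $I_G\cong\bar{I}_C\oplus[y-1)$, Frobenius reciprocity, destabilization and SFC. It also produces an explicit generator of $L$ and an explicit embedding $\bar{I_C^\ast}\hookrightarrow\Lambda$. The paper's argument, by contrast, is coordinate-free, and it is the template the paper reuses verbatim for $Y^\ast\sim K(2)$.
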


\begin{proof}
Note, $I_G\cong\bar{I}_C\oplus[y-1)$ (pp. 229-230 of \cite{Johnson2021}). Recall, $[y-1)=j_\ast(I_3)$ is self dual. Next, consider the exact sequence, $0\to\bar{I}_C\oplus[y-1)\to\Lambda\to\mathbb{Z}\to 0$
and apply $-\otimes\bar{I}_C$. In the proof of Proposition \ref{Ri_otimes_K__iso_Ri} we have shown $\bar{I}_C\otimes[y-1)\cong\Lambda^{2d}$. Using this along with Frobenius Reciprocity on the middle term yields the exact sequence,
\begin{equation}
\label{IC_IC_intermediate_exact_sequence}
	0\to(\bar{I}_C\otimes\bar{I}_C)\oplus\Lambda^{2d}\stackrel{\iota}{\to}\Lambda^{3d}\to\bar{I}_C\to 0.
\end{equation}

By the dual of Proposition \ref{I_otimes_I_iso_L_free}, $\bar{I_C}\otimes\bar{I_C}\cong L^\ast\oplus\Lambda^{d-1}$. Substituting this back into (\ref{IC_IC_intermediate_exact_sequence}) we obtain $0\to L^\ast\oplus\Lambda^{3d-1}\stackrel{\iota}{\to}\Lambda^{3d}\to\bar{I}_C\to 0$. This can be transformed into $0\to L^\ast\to\Lambda^{3d}/\iota(\Lambda^{3d-1})\to\bar{I}_C\to 0$. By Johnson's `destabilization theorem' (see Propositions 22.1 and 34.2 of \cite{Johnson2021}, pp. 85, 113), $\Lambda^{3d}/\iota(\Lambda^{3d-1})$ is projective. We can therefore construct the split exact sequence $0\to\Lambda^{3d-1}\stackrel{\iota}{\to}\Lambda^{3d}\to\Lambda^{3d}/\iota(\Lambda^{3d-1})\to 0$. As the sequence splits, $\Lambda^{3d}/\iota(\Lambda^{3d-1})$ is stably free of rank $1$. However, $\Lambda$ has SFC and so there are no nontrivial stably free modules. We therefore obtain the exact sequence $0\to L^\ast\to\Lambda\to\bar{I}_C\to 0$. Since $\bar{I}_C\cong R(1)$, the result now follows from Schanuel's Lemma, i.e. $L^\ast\sim K(1)$.
\end{proof}

\noindent To summarise, we have shown the following:

\begin{propn}
\label{IC_otimes_IC_iso_L_plus_free}
With, $L,\,L^\ast$ as above, $\bar{I}_C\otimes\bar{I}_C\cong L^\ast\oplus\Lambda^{d-1}$ and $\bar{I^\ast_C}\otimes\bar{I^\ast_C}\cong L\oplus\Lambda^{d-1}$.
\end{propn}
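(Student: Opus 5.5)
The second isomorphism, $\bar{I^\ast_C}\otimes\bar{I^\ast_C}\cong L\oplus\Lambda^{d-1}$, is exactly Proposition \ref{I_otimes_I_iso_L_free}, since $L$ was defined as $\eta(T+V(\alpha+1)+V(\alpha^2+1))$. The plan is therefore to deduce the first isomorphism from the second by duality.

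Everything in sight is a $\Lambda$-lattice, so dualising is exact and commutes with finite direct sums. For lattices $M,N$ the canonical map
\[
M^\ast\otimes_{\mathbb{Z}} N^\ast\to (M\otimes_{\mathbb{Z}} N)^\ast,\qquad f\otimes g\mapsto (m\otimes n\mapsto f(m)g(n)),
\]
is an isomorphism of abelian groups because $M,N$ are free of finite rank. It is $G$-equivariant for the diagonal action, with duals made into right modules via $\bar g=g^{-1}$. I would check this directly on $f\otimes g$: both sides transform by the same formula $(f\otimes g)\cdot h(m\otimes n)=f(mh^{-1})g(nh^{-1})$. I also need $\Lambda^\ast\cong\Lambda$, which holds because $\Lambda$ is self-dual, and $L^{\ast\ast}\cong L$ for lattices.

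Next, identify $\bar{I_C}$ with the dual of $\bar{I^\ast_C}$. By (\ref{R_i_iso_to_I_C}) and (\ref{duality_of_Ri}), $(\bar{I^\ast_C})^\ast\cong R(3)^\ast\cong R(1)\cong\bar{I_C}$. Now dualise $\bar{I^\ast_C}\otimes\bar{I^\ast_C}\cong L\oplus\Lambda^{d-1}$:
\[
\bar{I_C}\otimes\bar{I_C}\cong(\bar{I^\ast_C})^\ast\otimes(\bar{I^\ast_C})^\ast\cong(\bar{I^\ast_C}\otimes\bar{I^\ast_C})^\ast\cong L^\ast\oplus(\Lambda^\ast)^{d-1}\cong L^\ast\oplus\Lambda^{d-1}.
\]
This is also precisely the form in which the first isomorphism was already invoked, as ``the dual of Proposition \ref{I_otimes_I_iso_L_free}'', in the proof of Proposition \ref{K(1)_sim_L_dual}. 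The summary statement only needs to record it together with $L^\ast\sim K(1)$.

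The only point needing care is the equivariance of $M^\ast\otimes N^\ast\cong(M\otimes N)^\ast$ under the convention that duals are right modules via the involution. It is a one-line check on generators, and it is the step I would write out explicitly. No straightness or cancellation argument is needed, because we are dualising a genuine isomorphism rather than a stable one.
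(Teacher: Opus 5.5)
Your proposal is correct and is essentially the paper's argument. The second isomorphism is Proposition \ref{I_otimes_I_iso_L_free}, and the first is obtained by dualising it, exactly as the paper does (``by the dual of Proposition \ref{I_otimes_I_iso_L_free}'') in the proof of Proposition \ref{K(1)_sim_L_dual}. Your explicit checks that $M^\ast\otimes N^\ast\cong(M\otimes N)^\ast$ is equivariant under the $\bar g=g^{-1}$ convention, and that $(\bar{I^\ast_C})^\ast\cong R(3)^\ast\cong R(1)\cong\bar{I_C}$, fill in details the paper leaves implicit.
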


\noindent From this, we obtain the following result: 

\begin{propn}
	$K(1)\oplus j_\ast(\mathbb{Z})\in\Omega_2(\mathbb{Z})$.
\end{propn}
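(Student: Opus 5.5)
We need to show $K(1)\oplus j_\ast(\mathbb{Z})\in\Omega_2(\mathbb{Z})$, where $j_\ast(\mathbb{Z})=\mathbb{Z}\otimes_{\mathbb{Z}[C_3]}\Lambda$ is induced from the trivial $C_3$-module. The plan is to splice two short exact sequences. The first is
\[0\to I_G\to\Lambda\to\mathbb{Z}\to 0,\]
which exhibits $I_G$ as a first syzygy. The second is a free presentation of $I_G\cong R(1)\oplus[y-1)$, using the decomposition $I_G\cong\bar{I}_C\oplus[y-1)$ quoted in the proof of Proposition \ref{K(1)_sim_L_dual}.

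For the $R(1)$ summand we use the defining sequence $\mathcal{S}(1)=(0\to K(1)\to\Lambda\to R(1)\to 0)$.

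For the $[y-1)=j_\ast(I_3)$ summand we induce from $C_3$. Over $\mathbb{Z}[C_3]$ there is the exact sequence
\[0\to\mathbb{Z}\stackrel{\Sigma_y}{\to}\mathbb{Z}[C_3]\stackrel{y-1}{\to}I_3\to 0,\]
which is the dual of the augmentation sequence, or equivalently part of the periodic resolution of $C_3$. Induction $j_\ast=-\otimes_{\mathbb{Z}[C_3]}\Lambda$ is exact because $\Lambda$ is free over $\mathbb{Z}[C_3]$. Applying it gives
\[0\to j_\ast(\mathbb{Z})\to\Lambda\to[y-1)\to 0.\]

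Taking the direct sum of the two sequences yields
\[0\to K(1)\oplus j_\ast(\mathbb{Z})\to\Lambda^2\to R(1)\oplus[y-1)\cong I_G\to 0.\]
Splicing this with the augmentation sequence gives the exact sequence
\[0\to K(1)\oplus j_\ast(\mathbb{Z})\to\Lambda^2\to\Lambda\to\mathbb{Z}\to 0.\]
This is a free resolution of length two, so $K(1)\oplus j_\ast(\mathbb{Z})$ represents $\Omega_2(\mathbb{Z})$, by definition of the syzygy via Schanuel's lemma.

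The main point needing care is the decomposition $I_G\cong R(1)\oplus[y-1)$ together with the identification $\bar{I}_C\cong R(1)$ from (\ref{R_i_iso_to_I_C}); both are taken from \cite{Johnson2021} as quoted earlier. The composite $\Lambda^2\to I_G\hookrightarrow\Lambda$ must also be exactly the map whose image is $I_G$, and this holds by construction.

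Beyond that, one should check that the induced sequence really has kernel $j_\ast(\mathbb{Z})$, which is immediate from exactness of induction. If one wants the kernel presented as a submodule, it is $\Sigma_y\Lambda$, isomorphic to $j_\ast(\mathbb{Z})$ as the span of $\Sigma_y x^r$ for $0\le r\le p-1$. No straightness or cancellation argument is needed, since membership in $\Omega_2(\mathbb{Z})$ only requires some exact sequence of this shape with free middle terms.
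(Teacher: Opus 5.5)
Your proof is correct, and it takes a genuinely different and much shorter route than the paper's. You resolve the two strands of $I_G\cong\bar{I}_C\oplus[y-1)$ separately. The $x$-strand uses the defining sequence $\mathcal{S}(1)$. The $y$-strand comes from inducing $0\to\mathbb{Z}\to\mathbb{Z}[C_3]\to I_3\to 0$ along $j$, which is exact because $\Lambda$ is free over $\mathbb{Z}[C_3]$. The sequence $0\to K(1)\oplus j_\ast(\mathbb{Z})\to\Lambda^2\to\Lambda\to\mathbb{Z}\to 0$ then appears directly, with nothing to cancel.

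The paper instead builds a diagonal resolution. It tensors $0\to I_G\to\Lambda\to\mathbb{Z}\to 0$ with $I_G$ and splits $I_G\otimes I_G$ using three facts:
\begin{itemize}
\item $\bar{I}_C\otimes\bar{I}_C\cong L^\ast\oplus\Lambda^{d-1}$;
\item $\bar{I}_C\otimes[y-1)\cong\Lambda^{2d}$;
\item a Frobenius reciprocity and Schanuel computation over $\mathbb{Z}[C_3]$, which shows $[y-1)\otimes[y-1)$ is stably $j_\ast(\mathbb{Z})$.
\end{itemize}
It then applies Johnson's destabilization theorem twice to strip the free summands, reaching $0\to L^\ast\oplus j_\ast(\mathbb{Z})\to\Lambda^2\to I_G\to 0$. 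The statement about $K(1)$ follows only afterwards, from $L^\ast\sim K(1)$ (Proposition \ref{K(1)_sim_L_dual}).

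What your route buys is economy. It needs only the decomposition of $I_G$ and exactness of induction: no tensor-product decompositions, no destabilization, no cancellation. It also lands on $K(1)\oplus j_\ast(\mathbb{Z})$ itself rather than on a stably equivalent module.

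What the paper's route buys is the information the paper actually wants. It shows the second syzygy is realised diagonally, as $I_G\otimes I_G$ modulo free summands. It also shows that the explicit lattice $L^\ast$ from the tensor decomposition sits in the $x$-strand, and $j_\ast(\mathbb{Z})$ comes from $[y-1)\otimes[y-1)$. Your argument recovers $L^\ast\oplus j_\ast(\mathbb{Z})\in\Omega_2(\mathbb{Z})$ only a posteriori, via $L^\ast\sim K(1)$.
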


\begin{proof}
Start with the exact sequence $0\to I_G\to\Lambda\to\mathbb{Z}\to0$ and alter it to $0\to I_G\otimes I_G\to\Lambda^{3p-1}\to I_G\to 0.$ Since $I_G\cong\bar{I_C}\oplus[y-1)$ we use Proposition \ref{IC_otimes_IC_iso_L_plus_free} and the fact that $\bar{I_C}\otimes[y-1)\cong\Lambda^{2d}$ to obtain the exact sequence $0\to L^\ast\oplus X\oplus\Lambda^{5d-1}\to\Lambda^{3p-1}\to I_G\to 0$, where $X=[y-1)\otimes[y-1)$. Using Johnson's `destabilization theorem':
\[
0\to L^\ast\oplus X\to\Lambda^{4d+3}\to I_G\to 0.\tag{$\dagger$}
\]
Using the special case of Curtis and Reiner's Tensor Product Theorem (p.243, \cite{CurtisReiner1990}) along with Frobenius Reciprocity once more, we observe
\[
	j_\ast(I_3)\otimes([y-1)\oplus\Lambda)
	\cong j_\ast(I_3\otimes( j^\ast([y-1))\oplus\mathbb{Z}[C_3]^{p}))\tag{$\ddagger$}
\]
Next, consider the exact sequence $0\to[y-1)\to K\to\mathbb{Z}\to0$ and apply the exact functor $j^\ast(-)$ so that we have $0\to j^\ast([y-1))\to\mathbb{Z}[C_3]^{(p-d)}\to\mathbb{Z}\to 0$. Since we also have $0\to I_3\to\mathbb{Z}[C_3]\to\mathbb{Z}\to 0$ we apply Schanuel's Lemma to conclude that $j^\ast([y-1))\oplus\mathbb{Z}[C_3]^p\cong I_3\oplus\mathbb{Z}[C_3]^{(2p-d-1)}$. Plugging this into $\ddagger$, we have
\[
j_\ast(I_3)\otimes([y-1)\oplus\Lambda)\cong j_\ast(I_3\otimes (I_3\oplus\mathbb{Z}[C_3]^{(2p-d-1)}))\cong j_\ast(\mathbb{Z}\oplus\mathbb{Z}[C_3]^{(4p-2d-1)})\cong j_\ast(\mathbb{Z})\oplus\Lambda^{10d+3}.
\]
Finally, modify $\dagger$ into $0\to L^\ast\oplus X\oplus\Lambda^{2p}\to\Lambda^{4d+2p+3}\to I_G\to 0$ and plug in the above so that we get, after applying Johnson's `destabilization theorem' one more time, $0\to L^\ast\oplus j_\ast(\mathbb{Z})\to \Lambda^2\to I_G\to 0.$ In other words, $L^\ast\oplus j_\ast(\mathbb{Z})\in\Omega_1(I_G)=\Omega_2(\mathbb{Z})$, as required.
\end{proof}

\section{$R(1)\otimes R(3)\sim K$}
\label{sec:theorem_B_part_5}

With the $\Lambda$-modules $\bar{I_C^\ast}$, $\bar{I}_C=\overline{(\zeta_p-1)I_C^\ast}$ and $K$ as before, we aim to show
\begin{equation}
\bar{I}_C\otimes\bar{I_C^\ast}\cong K\oplus\Lambda^{d-1}.
\end{equation}
Observe $\{(\nu-1)\nu^i\otimes\nu^j\:|\:0\leq i,\,j\leq p-2\}$ forms a $\mathbb{Z}$-basis for $\bar{I}_C\otimes\bar{I_C^\ast}$ and the $x,\,y$-actions are as explained in Section \ref{sec:theorem_B_part_1}. We proceed in the following three stages:

\begin{itemize}
\item[] Stage 1: Find a description for $\bar{I}_C\otimes \bar{I}_C^\ast$, part of which is isomorphic to $K$;
\item[] Stage 2: Of what is left, demonstrate this is free;
\item[] Stage 3: Deduce that $\bar{I}_C\otimes\bar{I_C^\ast}\cong K\oplus\Lambda^{d-1}$.
\end{itemize}

\noindent \textbf{Stage 1:} Define the following modules for $1\leq r\leq p-2$:
\begin{equation}
U(r)=span_\mathbb{Z}\{(\nu-1)\nu^{r+k}\otimes\nu^k\:|\:0\leq k\leq p-1\}\subset I_C\otimes I_C^\ast.
\end{equation}

\noindent Using essentially the same arguments as Chapter 3 of \cite{Evans2018}, we can show the following:
\medskip

\noindent\textbf{Fact 1:} For each $r$, $\{(\nu-1)\nu^{r+k}\otimes\nu^k\:|\:0\leq k\leq p-1\}$ is a $\mathbb{Z}$-basis for $U(r)$, i.e. $U(r)\cong_{\Lambda_0}\Lambda_0$.

\noindent\textbf{Fact 2:} $U(r)\cap [U(1)+\cdots+U(r-1)+U(r+1)+\cdots+U(p-2)]=\{0\}$.

\noindent\textbf{Fact 3:} Set $U:=U(1)\oplus\cdots\oplus U(p-2)$. Then $I_C\otimes I_C^\ast/U$ is a rank 1 lattice generated by the image of $S=\sum^{p-2}_{i=0}(\sum^{p-2}_{j=i}(\nu-1)\nu^i\otimes\nu^j)$.

\noindent\textbf{Fact 4:} $I_C\otimes I_C^\ast\cong_{\Lambda_0}[\natural(S))\oplus U$, where $\natural: I_C\otimes I_C^\ast\to I_C\otimes I_C^\ast/U$.
\medskip

\noindent Using $\nu^i-1=\sum^{i-1}_{j=0}(\nu-1)\nu^j$, we rewrite $S=(\nu-1)\otimes 1+(\nu^2-1)\otimes\nu+\cdots+(\nu^i-1)\otimes\nu^{i-1}+\cdots+(\nu^{p-1}-1)\otimes\nu^{p-2}$. Note, $Sx=S$.

Before introducing the $y$-action, we first make several alterations to the $\mathbb{Z}$-basis of $[\natural(S))\oplus U$. First, we consider the elements of $U$ of the form $-\otimes\nu^i$. The left hand side of these looks like $(\nu-1)\nu^r$ where $i+1\leq r\leq i+p-2$. In particular, by summing the first $r$ of these elements, we get $(\nu-1)(\nu^{i+r}+\cdots+\nu^{i+2}+\nu^{i+1})=(\nu-1)(\nu^{r-1}+\cdots+\nu+1)\nu^{i+1}=(\nu^r-1)\nu^{i+1}$. By elementary basis change, we can therefore replace $U(r)$ by
\[U(r)^\prime=span_\mathbb{Z}\{(\nu^r-1)\nu^{i+1}\otimes\nu^i\:|\:0\leq i\leq p-1\}.\]
Trivially, $U(1)^\prime =U(1)$. We now replace\footnote{Notice we have dropped $\natural$ from $\natural(S)$, and have written $S$ instead of $[S)$. This is to simplify notation.} $S\oplus U$ by $S\oplus U^\prime$, where $U^\prime=U(1)^\prime\oplus\cdots\oplus U(p-2)^\prime$. By introducing the $y$-action, we will show $S+U(\alpha-1)^\prime+U(\alpha^2-1)^\prime\cong K$. Set $U_\alpha:=U(\alpha-1)^\prime+U(\alpha^2-1)^\prime$ which, as we will see, is isomorphic to $[y-1)$. The following is clear.

\begin{propn}
\label{U_alpha_x_action}
\small{$\rho_{U_\alpha}(x^{-1})=\begin{pmatrix}\Psi&0\\0&\Psi\end{pmatrix}$}, where $\Psi$ is the $p\times p$ block defined in Proposition \ref{representation_of_V0_x_action}.
\end{propn}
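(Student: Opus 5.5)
This is a direct computation modelled on the proof of Proposition \ref{representation_of_V0_x_action}. First I fix an explicit ordered $\mathbb{Z}$-basis of $U_\alpha$:
\[
e_i=(\nu^{\alpha-1}-1)\nu^{i}\otimes\nu^{i-1},\qquad e_{p+i}=(\nu^{\alpha^2-1}-1)\nu^{i}\otimes\nu^{i-1},\qquad 1\leq i\leq p .
\]
This is the defining spanning set of $U(\alpha-1)^\prime$ and $U(\alpha^2-1)^\prime$, reindexed by $i\mapsto i-1$. That these $2p$ elements really form a basis should follow from Facts 1 and 2. Indeed, $U(r)^\prime$ was obtained from $U(r)$ by an elementary (unitriangular) change of basis, so Fact 1 transfers to $U(r)^\prime$, and Fact 2 shows that the sum $U(\alpha-1)^\prime+U(\alpha^2-1)^\prime$ is direct.

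Next I compute the action of $x$ on these basis vectors. Recall that $x$ acts on $I_C^\ast$, and on $I_C=(\nu-1)I_C^\ast$, by multiplication by $\nu$, and on the tensor product diagonally. Hence
\[
e_i\cdot x=(\nu^{\alpha-1}-1)\nu^{i+1}\otimes\nu^{i}=e_{i+1}\quad(1\leq i\leq p-1).
\]
For the wrap-around, $\nu^p=1$ in $I_C^\ast=\Lambda_0/(\Sigma)$, so
\[
e_p\cdot x=(\nu^{\alpha-1}-1)\nu^{p+1}\otimes\nu^{p}=(\nu^{\alpha-1}-1)\nu\otimes 1=e_1 .
\]
The same computation gives $e_{p+i}\cdot x=e_{p+i+1}$ for $1\leq i\leq p-1$ and $e_{2p}\cdot x=e_{p+1}$. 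In particular, $x$ preserves each of $U(\alpha-1)^\prime$ and $U(\alpha^2-1)^\prime$, so the matrix is block diagonal.

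Finally, I compare with the convention behind $\Psi$ in Proposition \ref{representation_of_V0_x_action}. There the same shift pattern, $e_i\mapsto e_{i+1}$ with $e_p\mapsto e_1$, was recorded as $\rho(x^{-1})=\Psi$, so the identical bookkeeping gives $\rho_{U_\alpha}(x^{-1})=\operatorname{diag}(\Psi,\Psi)$. The only points needing care are the index conventions (the row/column convention for matrices of a right action, and the reindexing $i\mapsto i-1$) and the use of $\nu^p=1$ to close the cycle. I do not expect any real obstacle.
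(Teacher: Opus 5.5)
Your computation is correct and is the argument the paper has in mind; the paper omits it as ``clear'', and it is the verbatim analogue of Proposition \ref{representation_of_V0_x_action}, using the same labelling $e_i,e_{p+i}$ that the paper adopts in Proposition \ref{U_alpha_y_action}. One correction to your side remark on bases: $U(r)^\prime$ is not a re-basing of $U(r)$ alone (e.g.\ $(\nu^2-1)\nu^{i+1}\otimes\nu^i\in U(1)\oplus U(2)$), so the independence of your $2p$ vectors should instead come from this fact: for each fixed second factor $\nu^i$, the passage from $\{(\nu-1)\nu^{s+i}\otimes\nu^i\}_s$ to $\{(\nu^r-1)\nu^{i+1}\otimes\nu^i\}_r$ is unitriangular, so by Facts 1--2 the union of all the $U(r)^\prime$-generators is a $\mathbb{Z}$-basis of $U$.
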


\begin{propn}
\label{U_alpha_y_action}
\small{$\rho_{U_\alpha}(y)=\begin{pmatrix}-\Phi^T&-\Phi^T\\\Phi^T&0\end{pmatrix}$}, where $\Phi^T$ is the $p\times p$ block of Proposition \ref{representation_of_Vr}.
\end{propn}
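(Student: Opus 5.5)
The plan is to compute the action of $y^{-1}$ directly on the modified basis of $U_\alpha$, exactly as in the proof of Proposition \ref{representation_of_Vr}, and read off the blocks. Throughout, exponents of $\nu$ are read mod $p$, which is legitimate because $\nu^p=i^\ast(x^p)=1$ in $\overline{I_C^\ast}$. The Galois action is $\nu^i\cdot y^{-1}=\nu^{\alpha i}$, and it acts diagonally on the tensor product.

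First I will fix the bases. For $0\le k\le p-1$, put
\[
e_{k+1}=(\nu^{\alpha-1}-1)\nu^{k+1}\otimes\nu^k\in U(\alpha-1)^\prime
\]
and
\[
e_{p+k+1}=(\nu^{\alpha^2-1}-1)\nu^{k+1}\otimes\nu^k\in U(\alpha^2-1)^\prime.
\]
The $x$-action simply shifts $k\mapsto k+1$ cyclically, which is Proposition \ref{U_alpha_x_action}, so only $y$ needs work.

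Next I will apply $y^{-1}$ to $e_{k+1}$. This gives $(\nu^{\alpha^2-\alpha}-1)\nu^{\alpha k+\alpha}\otimes\nu^{\alpha k}$. Put $m\equiv\alpha k \pmod p$. The key step is to rewrite the first tensor factor in the form $(\nu^s-1)\nu^{m+1}$ with $s\in\{\alpha-1,\alpha^2-1\}$, so that the second factor stays $\nu^m$. Expanding, and using the telescoping identity that defines the primed bases, we get
\[
(\nu^{\alpha^2-\alpha}-1)\nu^{m+\alpha}=\nu^{m+\alpha^2}-\nu^{m+\alpha}=(\nu^{\alpha^2-1}-1)\nu^{m+1}-(\nu^{\alpha-1}-1)\nu^{m+1}.
\]
Hence $e_{k+1}y^{-1}=e_{p+m+1}-e_{m+1}$.

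Similarly, applying $y^{-1}$ to $e_{p+k+1}$ and using $\alpha^3\equiv1 \pmod p$ gives $(\nu^{1-\alpha}-1)\nu^{m+\alpha}\otimes\nu^m$. Its first factor is $\nu^{m+1}-\nu^{m+\alpha}=-(\nu^{\alpha-1}-1)\nu^{m+1}$, so $e_{p+k+1}y^{-1}=-e_{m+1}$.

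Finally I will assemble the matrix. The index map $k+1\mapsto \alpha k+1 \pmod p$ is precisely the permutation encoded by $\Phi^T$ in Proposition \ref{representation_of_Vr}, and I will use the same row/column convention as there. The contributions landing in the $U(\alpha-1)^\prime$ coordinates are $-\Phi^T$ from both blocks. The contribution landing in the $U(\alpha^2-1)^\prime$ coordinates is $+\Phi^T$, coming only from the first block. This yields the stated matrix.

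The only real obstacle is the rewriting step. One must check that the images stay inside $U_\alpha$, which amounts to checking that no terms from other $U(r)^\prime$ appear, and one must get the signs right. The displayed telescoping identities settle both points. The rest is bookkeeping of the index convention, mirroring the earlier proof of Proposition \ref{representation_of_Vr}.
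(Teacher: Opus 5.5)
Your proposal is correct and follows essentially the same route as the paper. You apply $y^{-1}=y^{2}$ to the basis $(\nu^{\alpha-1}-1)\nu^{i}\otimes\nu^{i-1}$, $(\nu^{\alpha^2-1}-1)\nu^{j}\otimes\nu^{j-1}$, and rewrite the first tensor factor via $\nu^{\alpha^2}-\nu^{\alpha}=(\nu^{\alpha^2-1}-1)\nu-(\nu^{\alpha-1}-1)\nu$ together with $\alpha^3\equiv1$. You then read off the blocks using the same column convention as in Proposition \ref{representation_of_Vr}.
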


\begin{proof}
Label $e_i=(\nu^{\alpha-1}-1)\nu^i\otimes\nu^{i-1}$ and $e_{p+j}=(\nu^{\alpha^2-1}-1)\nu^j\otimes\nu^{j-1}$ for $1\leq i,\,j\leq p$ and consider the action of $y^2$. We have the following:
\begin{eqnarray*}
[(\nu^{\alpha-1}-1)\nu^{i}\otimes \nu^{i-1}]y^2=&(\nu^{\alpha^2-\alpha}-1)\nu^{\alpha i}\otimes \nu^{\alpha(i-1)}\\
=&(\nu^{\alpha^2}-\nu+\nu-\nu^\alpha)\nu^{\alpha(i-1)}\otimes \nu^{\alpha(i-1)}\\
=&(\nu^{\alpha^2-1}-1)\nu^{\alpha(i-1)+1}\otimes \nu^{\alpha(i-1)}-(\nu^{\alpha-1}-1)\nu^{\alpha(i-1)+1}\otimes \nu^{\alpha(i-1)}\\
=&e_{p+\alpha(i-1)+1}-e_{\alpha(i-1)+1}.
\end{eqnarray*}
For the elements $e_{p+i}$, we use $\alpha^3=1\:(mod\:p)$:
\[
[(\nu^{\alpha^2-1}-1)\nu^{i}\otimes\nu^{i-1}]y^2=(\nu^{1-\alpha}-1)\nu^{\alpha i}\otimes \nu^{\alpha(i-1)}
=(\nu-\nu^\alpha)\nu^{\alpha(i-1)}\otimes \nu^{\alpha(i-1)}=-e_{\alpha(i-1)+1}.
\]
\end{proof}

\begin{propn}
\label{y_applied_to_S}
$S\cdot y^2=S+\sum^{p-1}_{i=0}(\nu^{\alpha-1}-1)\nu^{i+1}\otimes\nu^i$.
\end{propn}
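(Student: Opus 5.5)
The plan is a direct computation in $\bar{I}_C\otimes\bar{I_C^\ast}$, using only the rewritten form of $S$ and the $y^2$-action already used in the proof of Proposition \ref{U_alpha_y_action}. There, $y^2$ acts on both tensor factors by the Galois substitution $\nu^k\mapsto\nu^{\alpha k}$, extended multiplicatively. This is well defined because $\nu^p=1$ in $I_C^\ast=\Lambda_0/(\Sigma)$, so exponents may be read modulo $p$ throughout. I start from
\[
S=\sum_{i=1}^{p-1}(\nu^i-1)\otimes\nu^{i-1},
\]
and apply $y^2$ termwise to obtain
\[
S\cdot y^2=\sum_{i=1}^{p-1}(\nu^{\alpha i}-1)\otimes\nu^{\alpha(i-1)}.
\]

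The key step is a reindexing. I set $j\equiv\alpha(i-1)\pmod p$. Since $\alpha$ is a unit mod $p$, as $i-1$ runs over $0,\dots,p-2$ the residue $j$ runs over every class except $j\equiv-\alpha$. The summand then becomes $(\nu^{j+\alpha}-1)\otimes\nu^j$. The missing index $j\equiv-\alpha$ would contribute $(\nu^0-1)\otimes\nu^{-\alpha}=0$, so I may add it freely. This gives
\[
S\cdot y^2=\sum_{j=0}^{p-1}(\nu^{j+\alpha}-1)\otimes\nu^{j}.
\]

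Next I split each left factor using the telescoping identity
\[
\nu^{j+\alpha}-1=(\nu^{j+1}-1)+(\nu^{\alpha-1}-1)\nu^{j+1}.
\]
The sum therefore breaks into two pieces. The first piece is $\sum_{j=0}^{p-1}(\nu^{j+1}-1)\otimes\nu^j$. Its terms with $j=0,\dots,p-2$ are exactly $S$, and its term with $j=p-1$ vanishes because $\nu^p-1=0$. The second piece is $\sum_{j=0}^{p-1}(\nu^{\alpha-1}-1)\nu^{j+1}\otimes\nu^j$, which is precisely the correction term in the statement (the sum of the basis elements $e_i$ of $U(\alpha-1)^\prime$).

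I expect no real obstacle here. The only point needing care is the bookkeeping in the reindexing: I must confirm that the omitted residue contributes zero, and that working with exponents modulo $p$ is legitimate in $I_C^\ast$ even though the displayed $\mathbb{Z}$-basis only uses $\nu^0,\dots,\nu^{p-2}$. Both follow from $\nu^p=1$. I also need the convention that $y^2$ multiplies exponents by $\alpha$, which is consistent with the computation in Proposition \ref{U_alpha_y_action}.
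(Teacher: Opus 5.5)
Your proposal is correct and follows essentially the paper's own computation. Both arguments apply $y^2$ as $\nu^k\mapsto\nu^{\alpha k}$ and split $\nu^{\alpha i}-1=(\nu^{\alpha(i-1)+1}-1)+(\nu^{\alpha-1}-1)\nu^{\alpha(i-1)+1}$. Both then handle the two boundary indices using $\nu^p=1$. Your step of adding the vanishing term at $j\equiv-\alpha$ is a tidier version of the paper's identification of $-(\nu^{p-\alpha+1}-1)\otimes\nu^{p-\alpha}$ as the ``missing'' basis element of $U(\alpha-1)^\prime$.
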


\begin{proof}
Take $S=\sum^{p-1}_{i=1}(\nu^{i}-1)\otimes \nu^{i-1}$ and note $\nu^{\alpha(i-1)}=\nu^{p-1}$ when $i=p-\alpha^2+1$. Then,
\begin{eqnarray*}
Sy^2=&\sum_{i\neq p-\alpha^2+1}(\nu^{\alpha i}-1)\otimes\nu^{\alpha(i-1)}+(\nu^{\alpha-1}-1)\otimes\nu^{p-1}\\
=&\scalebox{0.9}{$\sum_{i\neq p-\alpha^2+1}(\nu^{\alpha i}-\nu^{\alpha(i-1)+1})\otimes\nu^{\alpha(i-1)}+\sum_{i\neq p-\alpha^2+1}(\nu^{\alpha(i-1)+1}-1)\otimes\nu^{\alpha(i-1)}+(\nu^{\alpha-1}-1)\otimes\nu^{p-1}$}\\
=&\underbrace{\scalebox{0.9}{$\sum_{i\neq p-\alpha^2+1}(\nu^{\alpha-1}-1)\nu^{\alpha(i-1)+1}\otimes\nu^{\alpha(i-1)}+(\nu^{\alpha-1}-1)\otimes\nu^{p-1}$}}_{\ast}+\underbrace{\scalebox{0.9}{$\sum_{i\neq p-\alpha^2+1}(\nu^{\alpha(i-1)+1}-1)\otimes\nu^{\alpha(i-1)}$}}_{\ast\ast}.
\end{eqnarray*}

\noindent First, we observe that $\nu^{\alpha(i-1)}=\nu^{\alpha(j-1)}$ if and only if $i=j$. Next, we note $\ast$ is simply a sum of elements in $U(\alpha-1)^\prime$ and that $\nu^{\alpha(i-1)}\neq\nu^{p-\alpha}$ for any $i\in\{1,\ldots,\,p-1\}$. Moving to $\ast\ast$, we observe $\ast\ast=S-(\nu^{p-\alpha+1}-1)\otimes\nu^{p-\alpha}=S+(\nu^{\alpha-1}-1)\nu^{p-\alpha+1}\otimes\nu^{p-\alpha}$. As this last term is the `missing' term from the basis of $U(\alpha-1)^\prime$, the result is shown.
\end{proof}

\noindent Using $Sx=S$ and Propositions \ref{U_alpha_x_action}, \ref{U_alpha_y_action} and \ref{y_applied_to_S} we have the following two results:

\begin{propn}
Set $K^\prime=S+U_\alpha$; then
\begin{equation*}
\small 
\rho_{K^\prime}(x^{-1})=
\begin{pmatrix}
\Psi&0_{p\times p}&0_{p\times 1}\\
0_{p\times p}&\Psi&0_{p\times 1}\\
0_{1\times p}&0_{1\times p}&1_{1\times 1}
\end{pmatrix}
.
\end{equation*}
\end{propn}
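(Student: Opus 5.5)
The matrix for $\rho_{K^\prime}(x^{-1})$ follows directly from the action of $x$ on the ordered $\mathbb{Z}$-basis of $K^\prime$, so I would argue as follows. Take the ordered basis
\[
e_1,\ldots,e_p,\ e_{p+1},\ldots,e_{2p},\ S,
\]
where $e_i=(\nu^{\alpha-1}-1)\nu^i\otimes\nu^{i-1}$ and $e_{p+j}=(\nu^{\alpha^2-1}-1)\nu^j\otimes\nu^{j-1}$ for $1\leq i,j\leq p$, as in the proof of Proposition \ref{U_alpha_y_action}. This is a basis of $K^\prime=S+U_\alpha$. Indeed, by Facts 1--4 and the elementary basis change replacing $U(r)$ by $U(r)^\prime$, the sum $S\oplus U(\alpha-1)^\prime\oplus U(\alpha^2-1)^\prime$ is direct as abelian groups. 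Moreover, each $U(r)^\prime$ is free over $\Lambda_0$ on the listed $p$ elements. Note that $\alpha-1$ and $\alpha^2-1$ are distinct residues in $\{1,\ldots,p-2\}$, since $\alpha\not\equiv 0,1,2$ would otherwise fail the order-three condition.

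Next I compute the $x$-action block by block. On the first block, $x$ multiplies each tensor factor by $\nu$, so $e_i\cdot x=(\nu^{\alpha-1}-1)\nu^{i+1}\otimes\nu^{i}=e_{i+1}$ for $1\leq i\leq p-1$. Since $\nu^p=1$ in $I_C^\ast$, we also get $e_p\cdot x=e_1$. The second block behaves identically, with $e_{p+j}\cdot x=e_{p+j+1}$ and $e_{2p}\cdot x=e_{p+1}$. Hence on each block $x$ acts by the cyclic permutation. Writing the action of $x^{-1}$ in the same convention as Proposition \ref{representation_of_V0_x_action}, each block gives exactly the matrix $\Psi$; this is the content of Proposition \ref{U_alpha_x_action}. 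Finally, $Sx=S$ was noted right after Fact 4: $S=\sum_i(\nu^i-1)\otimes\nu^{i-1}$ is reindexed by $x$ once $\nu^p=1$ is used, and its image generates the trivial rank-one quotient. So the last coordinate carries the $1\times1$ block $1$.

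Because neither $U_\alpha$ nor $S$ is mapped outside its own span by $x$, every off-diagonal block vanishes. The only point needing care, and so the main obstacle, is confirming that $S$ itself (not merely its image modulo $U$) is exactly $x$-fixed in $I_C\otimes I_C^\ast$. If there were a correction term in $U_\alpha$, an off-diagonal column would appear. I would verify this directly: $Sx=\sum_{i}(\nu^{i+1}-\nu)\otimes\nu^{i}$. Then I would rewrite $\nu^{i+1}-\nu=(\nu^{i+1}-1)-(\nu-1)$, and use $\sum_{i=0}^{p-1}(\nu-1)\otimes\nu^i=(\nu-1)\otimes\Sigma=0$ in $I_C^\ast$ to recover $S$ exactly. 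Assembling the three diagonal blocks then gives the stated matrix.
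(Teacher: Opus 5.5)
Your proposal is correct and is essentially the paper's argument: the paper reads the matrix off from the block form $\mathrm{diag}(\Psi,\Psi)$ of Proposition \ref{U_alpha_x_action} together with $Sx=S$. Your explicit check that $S$ is fixed exactly in $\bar I_C\otimes\bar{I_C^\ast}$, not just modulo $U$ (using $\nu^p=1$ and $(\nu-1)\otimes\Sigma=0$), supplies a step the paper states without proof. One small slip in a side remark: $\alpha\equiv 2$ is not excluded by the order-three condition (take $p=7$, $\alpha=2$), but all you need is $\alpha,\alpha^2\not\equiv 0,1 \pmod p$, and that does follow.
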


\begin{propn}
\begin{equation*}
\small
\rho_{K^\prime}(y)=
\begin{pmatrix}
-\Phi^T&-\Phi^T&1_{p\times 1}\\
\Phi^T&0_{p\times p}&0_{p\times 1}\\
0_{1\times p}&0_{1\times p}&1_{1\times 1}
\end{pmatrix}
\end{equation*}
\end{propn}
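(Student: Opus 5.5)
The statement records the matrix of $y$ acting on $K^\prime=S+U_\alpha$ with respect to the ordered $\mathbb{Z}$-basis $\{e_1,\ldots,e_p,e_{p+1},\ldots,e_{2p},S\}$. This is the same basis used in Proposition \ref{U_alpha_y_action}, with $S$ appended as the last vector. I will assemble the matrix column by column from the three preceding computations, using the same convention as before: the action recorded as $\rho(y)$ is that of $y^2=y^{-1}$ on basis vectors, and the images are read off along the rows, as in Propositions \ref{representation_of_Vr} and \ref{U_alpha_y_action}.

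\textbf{The $U_\alpha$ block.} Since $U_\alpha\subset K^\prime$ is $y$-invariant, the upper left $2p\times 2p$ block is exactly the matrix of Proposition \ref{U_alpha_y_action}. The images $e_iy^2=e_{p+\alpha(i-1)+1}-e_{\alpha(i-1)+1}$ and $e_{p+i}y^2=-e_{\alpha(i-1)+1}$ involve no $S$-component. So the entries in the $S$ position for these basis vectors vanish, which gives the $0_{1\times p}$ blocks.

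\textbf{The $S$ column.} By Proposition \ref{y_applied_to_S},
\[
S\cdot y^2=S+\sum_{i=0}^{p-1}(\nu^{\alpha-1}-1)\nu^{i+1}\otimes\nu^i .
\]
The sum is exactly $e_1+\cdots+e_p$, the sum of all basis vectors of $U(\alpha-1)^\prime$. Re-indexing $i\mapsto i+1$ matches the labels $e_i=(\nu^{\alpha-1}-1)\nu^{i}\otimes\nu^{i-1}$, with the index read mod $p$ so that the $i=p-1$ term lands on $e_p$. Hence the entry for $S$ itself is $1$, each of the first $p$ coordinates contributes $1$ (the column $1_{p\times 1}$), and the $U(\alpha^2-1)^\prime$ coordinates contribute $0$.

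\textbf{Ordering check.} The placement of $1_{p\times 1}$ in the same block column as $-\Phi^T$ must agree with the transpose convention already adopted in Proposition \ref{U_alpha_y_action}. I will fix that convention by checking it against the proof of that proposition, and then place the $S$ data in the matching position. The only possible obstacle is this bookkeeping of index and transpose conventions together with the mod-$p$ re-indexing of the sum in Proposition \ref{y_applied_to_S}. There is no further mathematical content: the rest of the statement follows from Propositions \ref{U_alpha_y_action} and \ref{y_applied_to_S} and the invariance of $U_\alpha$.
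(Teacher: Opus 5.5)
Your proposal is correct and follows the paper's own route: the paper reads this matrix off from the $y$-invariance of $U_\alpha$ (Proposition~\ref{U_alpha_y_action}) together with $Sy^2=S+e_1+\cdots+e_p$ (Proposition~\ref{y_applied_to_S}), exactly as you do. The one slip is in how you describe the convention: in these matrices the image of each basis vector under $y^2$ is a \emph{column}, not a row (for instance, the first block column, with $-\Phi^T$ above $\Phi^T$, encodes $e_iy^2=e_{p+\alpha(i-1)+1}-e_{\alpha(i-1)+1}$), and $1_{p\times1}$ shares a block \emph{row}, not a block column, with the $-\Phi^T$ blocks; since your assembly of the $S$ column and the $0_{1\times p}$ blocks already uses this convention, your deferred ordering check comes out in your favour.
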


Denote the $\mathbb{Z}$-basis of $[y-1)$ by $\{f_i,\,f_{p+j}\:|\:1\leq i,\,j\leq p\}$, where $f_i=(y-1)x^{i-1}$ and $f_{p+j}=(y^2-1)x^{j-1}$. Recall also that $\Sigma_x y^2=\Sigma_x(y^2-1)+\Sigma_x$. Then the following are clear:

\begin{propn}
$\rho_{[y-1)}(x^{-1})=\begin{pmatrix}\Psi&0\\0&\Psi\end{pmatrix}$ where $\Psi$ is the $p\times p$ block given above.
\end{propn}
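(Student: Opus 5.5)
We need to verify the matrix of $x^{-1}$ acting on $[y-1)$ with respect to the basis $f_i=(y-1)x^{i-1}$, $f_{p+j}=(y^2-1)x^{j-1}$ for $1\le i,j\le p$. The plan is to compute the right action of $x$ on each basis vector directly, exactly as in Propositions \ref{representation_of_V0_x_action} and \ref{regular_rep_x}.

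First, for $1\le i\le p-1$ we have $f_i\cdot x=(y-1)x^{i}=f_{i+1}$. For $i=p$, since $x^p=1$, we get $f_p\cdot x=(y-1)x^{p}=f_1$. The identical computation on the second block gives $f_{p+j}\cdot x=f_{p+j+1}$ for $1\le j\le p-1$ and $f_{2p}\cdot x=f_{p+1}$. So $x$ permutes each block of $p$ basis vectors cyclically and never mixes the two blocks.

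Next, we read off the matrix using the same convention as Proposition \ref{representation_of_V0_x_action}. There, exactly this cyclic shift $e_i\mapsto e_{i+1}$ (indices taken mod $p$ within a block) is recorded as the block $\Psi$ with entries $\Psi_{1p}=1$ and $\Psi_{i,i-1}=1$. The action here is the same, so $\rho_{[y-1)}(x^{-1})$ is block diagonal with two copies of $\Psi$.

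There is no real obstacle. The only point needing care is to use the same row/column and $x$ versus $x^{-1}$ convention as in Propositions \ref{representation_of_V0_x_action} and \ref{regular_rep_x}, so that the later intertwining check with $\rho_{K'}(x^{-1})$ compares like with like.
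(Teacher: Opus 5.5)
Your proposal is correct and follows the paper's approach: the paper states this as ``clear'', and the implicit argument is your computation $f_i\cdot x=f_{i+1}$, $f_p\cdot x=f_1$ (and likewise on the second block). You then read off the matrix with the same convention as in Propositions \ref{representation_of_V0_x_action} and \ref{regular_rep_x}, namely row $i$ of $\rho(g)$ records $f_i\cdot g$, so $\rho(x^{-1})$ has its $1$'s in positions $(i,i-1)$ and $(1,p)$ of each block, which is $\Psi$.
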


\begin{corollary}
$\small 
\rho_{K}(x^{-1})=
\begin{pmatrix}
\Psi&0_{p\times p}&0_{p\times 1}\\
0_{p\times p}&\Psi&0_{p\times 1}\\
0_{1\times p}&0_{1\times p}&1_{1\times 1}
\end{pmatrix}
.
$
\end{corollary}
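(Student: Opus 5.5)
The corollary asks for the matrix of $x^{-1}$ on $K=[y-1,\,\Sigma_x)$ with respect to the ordered $\mathbb{Z}$-basis $\{f_1,\ldots,f_p,\,f_{p+1},\ldots,f_{2p},\,\Sigma_x\}$. Here $f_i=(y-1)x^{i-1}$ and $f_{p+j}=(y^2-1)x^{j-1}$, and this basis is the one supplied by Proposition \ref{Z_basis_for_K_q_is_3}. We combine the preceding proposition, which gives $\rho_{[y-1)}(x^{-1})$, with a one-line computation for $\Sigma_x$. The matrix is block diagonal because $[y-1)$ is an $x$-stable submodule of $K$ spanned by the first $2p$ basis vectors, and $\Sigma_x$ spans an $x$-stable complement as an abelian group under $x$ alone.

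The first step is to restrict to $[y-1)$. Since $[y-1)\subset K$ is a right submodule and its basis is exactly $\{f_i,\,f_{p+j}\}$, the upper-left $2p\times 2p$ block of $\rho_K(x^{-1})$ is $\rho_{[y-1)}(x^{-1})=\mathrm{diag}(\Psi,\Psi)$, by the preceding proposition. The $\Sigma_x$-column entries in these rows vanish because $f_i x$ and $f_{p+j}x$ lie in $[y-1)$ and so have no $\Sigma_x$-component. For completeness one can check directly that $f_i x=f_{i+1}$ for $i<p$ and $f_p x=(y-1)x^p=f_1$, and similarly on the second block. This reproduces the cyclic shift $\Psi$ of Proposition \ref{representation_of_V0_x_action}, with the same row/column convention used there for $\rho_{V_r}$ and $\rho_{reg}$.

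The second step is the last basis vector. We have $\Sigma_x\cdot x=\sum_{i=0}^{p-1}x^{i+1}=\Sigma_x$, since $x^p=1$, and likewise $\Sigma_x\cdot x^{-1}=\Sigma_x$. So $\Sigma_x$ is fixed by $x$, which gives the $1_{1\times1}$ entry and zeros elsewhere in its row and column.

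Assembling the two steps gives the stated matrix, identical to $\rho_{K'}(x^{-1})$. The only point needing care is that the convention (rows versus columns, $x$ versus $x^{-1}$) matches the one used in Proposition \ref{representation_of_V0_x_action}. Since both $[y-1)$ and $V_r$ are permuted by $x$ through the same cyclic shift of indices, the convention issue is automatically consistent, so I expect no real obstacle.
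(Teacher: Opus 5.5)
Your proposal is correct and matches the paper's argument, which the paper simply calls ``clear''. Both restrict to the $x$-stable submodule $[y-1)$ with basis $\{f_i,f_{p+j}\}$ from Proposition~\ref{Z_basis_for_K_q_is_3} to get the block $\mathrm{diag}(\Psi,\Psi)$, and then use $\Sigma_x\cdot x=\Sigma_x$ to get the final $1_{1\times 1}$ entry with zeros elsewhere in that row and column.
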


\begin{propn}
$\small \rho_{[y-1)}(y)=\begin{pmatrix}0&\Phi^T\\-\Phi^T&-\Phi^T\end{pmatrix}$ where $\Phi^T$ is the $p\times p$ block given above.
\end{propn}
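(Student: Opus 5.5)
The plan is a direct computation in the $\mathbb{Z}$-basis $\{f_i,\,f_{p+j}\}$ of $[y-1)$, following the conventions of Propositions \ref{regular_rep} and \ref{U_alpha_y_action}. Two conventions fix the bookkeeping. First, the displayed ``$\rho(y)$'' records the action of $y^{-1}=y^2$. Second, the $k$-th block column of the matrix lists the coordinates of the image of the $k$-th block of basis vectors. This is the reading under which Proposition \ref{U_alpha_y_action} holds: there $e_i\cdot y^2=e_{p+\alpha(i-1)+1}-e_{\alpha(i-1)+1}$ gives the first block column $(-\Phi^T,\Phi^T)^T$. The only group-theoretic input is the relation used in Proposition \ref{regular_rep}, namely $x^{k}y^2=y^2x^{\alpha k}$, which follows from $yxy^{-1}=x^\alpha$. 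Indices are read mod $p$ as before.

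First compute the image of $f_i=(y-1)x^{i-1}$:
\[
f_i\cdot y^2=(y-1)y^2x^{\alpha(i-1)}=(1-y^2)x^{\alpha(i-1)}=-f_{p+\alpha(i-1)+1},
\]
using $y^3=1$. As $i$ runs over $1,\ldots,p$, the map $i\mapsto\alpha(i-1)+1$ is exactly the permutation encoded by $\Phi^T$. So the first block column is $0$ in the top block and $-\Phi^T$ in the bottom block.

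Next compute the image of $f_{p+j}=(y^2-1)x^{j-1}$:
\[
f_{p+j}\cdot y^2=(y^2-1)y^2x^{\alpha(j-1)}=(y-y^2)x^{\alpha(j-1)}=f_{\alpha(j-1)+1}-f_{p+\alpha(j-1)+1},
\]
where the middle step writes $y-y^2=(y-1)-(y^2-1)$. So the second block column is $\Phi^T$ in the top block and $-\Phi^T$ in the bottom block. Putting the two columns together gives exactly the stated matrix.

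There is no genuine obstacle here. The only care needed is consistency of conventions: which of $y$ or $y^{-1}$ is being represented, and whether images are recorded as rows or columns. I would check both against Proposition \ref{U_alpha_y_action} so that the subsequent comparison with $\rho_{K'}(y)$ is legitimate. I would also note in passing the companion fact $\Sigma_x\cdot y^2=\Sigma_x+\sum_j f_{p+j}$, which will supply the extra column when this is extended to $\rho_K(y)$.
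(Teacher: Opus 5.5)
Your computation is correct and is exactly the direct verification the paper leaves as ``clear''. With $\rho(y)$ recording the action of $y^{-1}=y^2$ and block columns recording images (the conventions of Propositions~\ref{regular_rep} and~\ref{U_alpha_y_action}), your identities $f_i y^2=-f_{p+\alpha(i-1)+1}$ and $f_{p+j}y^2=f_{\alpha(j-1)+1}-f_{p+\alpha(j-1)+1}$ give precisely the stated matrix.
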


\begin{corollary}
$\small 
\rho_{K}(y)=
\begin{pmatrix}
0_{p\times p}&\Phi^T&0_{p\times 1}\\
-\Phi^T&-\Phi^T&1_{p\times 1}\\
0_{1\times p}&0_{1\times p}&1_{1\times 1}
\end{pmatrix}
.
$
\end{corollary}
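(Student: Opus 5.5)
The statement is the Corollary giving $\rho_K(y)$ with respect to the basis $\{f_i,\,f_{p+j}\}\cup\{\Sigma_x\}$ of $K$ from Proposition \ref{Z_basis_for_K_q_is_3}. I would prove it by direct computation on the basis, in the same way as Proposition \ref{regular_rep}. The convention is the same: the columns of $\rho(y)$ record the images of the basis vectors under right multiplication by $y^{-1}=y^2$.

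The first block is the $[y-1)$ part. Take $f_i=(y-1)x^{i-1}$. Using $x^{i-1}y^2=y^2x^{\alpha(i-1)}$ (the relation already used in Proposition \ref{regular_rep}), we get
\[
f_iy^2=(y^3-y^2)x^{\alpha(i-1)}=(1-y^2)x^{\alpha(i-1)}=-f_{p+\alpha(i-1)+1}.
\]
Similarly, for $f_{p+j}=(y^2-1)x^{j-1}$ we get
\[
f_{p+j}y^2=(y^4-y^2)x^{\alpha(j-1)}=(y-y^2)x^{\alpha(j-1)}=f_{\alpha(j-1)+1}-f_{p+\alpha(j-1)+1}.
\]
As $i$ and $j$ run over $1,\ldots,p$, the index map $k\mapsto\alpha(k-1)+1 \pmod p$ is exactly the permutation encoded by $\Phi^T$ in Proposition \ref{representation_of_Vr}. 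Assembling these images gives the $2p\times 2p$ block $\begin{pmatrix}0&\Phi^T\\-\Phi^T&-\Phi^T\end{pmatrix}$, which is the preceding Proposition for $\rho_{[y-1)}(y)$. I would cite that Proposition directly, but this check confirms the sign and placement conventions.

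The last column comes from the recalled identity $\Sigma_xy^2=\Sigma_x(y^2-1)+\Sigma_x$. Expanding gives
\[
\Sigma_x(y^2-1)=\sum_{j=0}^{p-1}x^j(y^2-1)=\sum_{j=0}^{p-1}(y^2-1)x^{\alpha j},
\]
which reindexes to $\sum_{j=1}^{p}f_{p+j}$. The image of $\Sigma_x$ therefore has coefficient $1$ on each $f_{p+j}$, $0$ on each $f_i$, and $1$ on $\Sigma_x$. This yields the column $(0_{p\times1},1_{p\times1},1)^T$. The bottom row is $(0,0,1)$ because no $f$'s image involves $\Sigma_x$, as $[y-1)$ is a submodule.

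The only delicate point is bookkeeping. I must check that the reindexing of $\Sigma_x(y^2-1)$ really produces all the $f_{p+j}$ rather than all the $f_i$. This follows from $x^jy^2=y^2x^{\alpha j}$ and the fact that $j\mapsto\alpha j$ is a bijection mod $p$. I must also keep the row/column convention consistent with the Corollary for $\rho_K(x^{-1})$.
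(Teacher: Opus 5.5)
Your proposal is correct and is essentially the paper's own argument: the paper treats this corollary as immediate from the block $\rho_{[y-1)}(y)$ together with the identity $\Sigma_x y^2=\Sigma_x(y^2-1)+\Sigma_x$. Your computations of $f_iy^2$, $f_{p+j}y^2$ and $\Sigma_xy^2$, using the paper's column convention for right multiplication by $y^{-1}=y^2$, just make those details explicit. The reindexing step can be shortened by noting that $\Sigma_x$ is central, so $\Sigma_x(y^2-1)=(y^2-1)\Sigma_x=\sum_{j=1}^{p}f_{p+j}$ directly.
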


\begin{propn}
\label{S_plus_U_alpha_is_K}
$S+U_\alpha\cong K$.
\end{propn}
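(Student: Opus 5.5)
The plan is to follow the same template as the proof that $V(r)+V(\alpha r)+V(\alpha^2 r)\cong\Lambda$: exhibit an invertible integral $(2p+1)\times(2p+1)$ matrix $X$ with $\rho_{K}(g)X=X\rho_{K^\prime}(g)$ for $g=x^{-1}$ and $g=y$. Since these two elements generate $G(p,3)$, this will give a $\Lambda$-isomorphism $K^\prime=S+U_\alpha\cong K$.

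First, I will make sure the matrices for $K^\prime$ really are matrices with respect to a $\mathbb{Z}$-basis. Concretely, $\{e_i,\,e_{p+j}\}\cup\{S\}$ should be a $\mathbb{Z}$-basis of $S+U_\alpha$, so that the sum is direct and of rank $2p+1$. This follows from Facts 1–4 together with the elementary basis change $U(r)\rightsquigarrow U(r)^\prime$, since $U(\alpha-1)^\prime$, $U(\alpha^2-1)^\prime$ and $[\natural(S))$ are distinct summands of $[\natural(S))\oplus U^\prime$. In parallel, Proposition \ref{Z_basis_for_K_q_is_3} supplies the basis $\{f_i,\,f_{p+j}\}\cup\{\Sigma_x\}$ of $K$, with respect to which the two corollaries above give $\rho_K$. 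So the problem is purely one of comparing the displayed matrices.

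Next, I will compare the blocks. Both $x^{-1}$-matrices equal $\mathrm{diag}(\Psi,\Psi,1)$. The $y$-matrices differ only by interchanging the two $p$-blocks. Take
\[
X=\begin{pmatrix}0&I_p&0\\ I_p&0&0\\ 0&0&1\end{pmatrix},
\]
which satisfies $X=X^{-1}$. Conjugating $\rho_{K^\prime}(y)$ by $X$ sends its $(i,j)$ block to the $(\sigma i,\sigma j)$ block, where $\sigma$ swaps $1$ and $2$. The new first block row is then $(0,\,\Phi^T,\,0)$ and the new second block row is $(-\Phi^T,\,-\Phi^T,\,1_{p\times1})$, while the last row is unchanged. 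This is exactly $\rho_K(y)$. Since $X$ commutes with $\mathrm{diag}(\Psi,\Psi,1)$, the $x$-relation also holds. Concretely, the isomorphism sends $e_i\mapsto f_{p+i}$, $e_{p+j}\mapsto f_j$ and $S\mapsto\Sigma_x$.

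The main obstacle is not this matrix identity, which is immediate, but the bookkeeping of conventions. I need to check that the $y$-matrices for $K^\prime$ (computed from the action of $y^2$ in Propositions \ref{U_alpha_y_action} and \ref{y_applied_to_S}) and for $K$ (computed from $f_iy^2$ and $\Sigma_xy^2$) use the same conventions: the same group element, rows versus columns, and indexing $i\mapsto\alpha(i-1)+1$. In particular, the column $1_{p\times1}$ must sit in the row-block corresponding to $U(\alpha-1)^\prime$ for $K^\prime$ and to $(y^2-1)x^{j-1}$ for $K$. If a transpose or inverse mismatch appears, I will first verify the map $e_i\mapsto f_{p+i}$, $e_{p+j}\mapsto f_j$, $S\mapsto\Sigma_x$ directly on generators. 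For $S$ this reads $Sy^2=S+\sum_i e_i \leftrightarrow \Sigma_xy^2=\Sigma_x+\sum_j(y^2-1)x^{j}$. For the $e$'s it means comparing $e_iy^2=e_{p+\alpha(i-1)+1}-e_{\alpha(i-1)+1}$ with $f_{p+i}y^2$ computed from $x^{i-1}y^2=y^2x^{\alpha(i-1)}$. Equivariance on these generators suffices, since it then holds for all of $\Lambda$.
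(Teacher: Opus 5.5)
Your proposal is correct and follows the same route as the paper, which also exhibits the involutive block-swap matrix $X$ and checks that it intertwines $\rho_{K^\prime}$ and $\rho_K$. Your relation $\rho_K(g)X=X\rho_{K^\prime}(g)$ is equivalent to the paper's $\rho_{K^\prime}(g)X=X\rho_K(g)$ because $X=X^{-1}$, and your explicit map $e_i\mapsto f_{p+i}$, $e_{p+j}\mapsto f_j$, $S\mapsto\Sigma_x$ does check out directly on generators (for instance $f_{p+i}y^2=f_{\alpha(i-1)+1}-f_{p+\alpha(i-1)+1}$ and $\Sigma_xy^2=\Sigma_x+\sum_j f_{p+j}$), so the convention worry you raise resolves cleanly.
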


\begin{proof}
Setting the following to be $X$ which is clearly invertible over the integers,
\[
\small
X=
\begin{pmatrix}
	0_{p\times p}&I_p&0_{p\times 1}\\
	I_p&0_{p\times p}&0_{p\times 1}\\
	0_{1\times p}&0_{1\times p}&1_{1\times 1}
\end{pmatrix},\]
it is now a straightforward observation that $\rho_{K^\prime}(g)X=X\rho_K(g)$ for all $g\in G(p,\,3)$.
\end{proof}

We now show $(S+U^\prime)/(S+U_\alpha)$ is free. Take the natural map $\eta:S+U^\prime\to(S+U^\prime)/(S+U_\alpha)$ and set $U_r^\prime=U(r)^\prime+U(\alpha(r+1)-1)^\prime+U(\alpha^2(r+1)-1)^\prime$ for $r\neq \alpha-1,\,\alpha^2-1$. We claim $\eta(U_r^\prime)\cong\Lambda$ for each $r\neq \alpha-1,\,\alpha^2-1$, where $\eta(U_r^\prime)$ represents the image of $U_r^\prime$ under $\eta$.

\begin{propn}
Set $U_r=\eta(U_r^\prime)$. With $\Psi,\Phi^T$ as above, we have
\begin{equation*}
\small
\rho_{U_r}(x^{-1})=
\begin{pmatrix}
	\Psi&0&0\\
	0&\Psi&0\\
	0&0&\Psi
\end{pmatrix}
\qquad\text{and}\qquad
\rho_{U_r}(y)=
\begin{pmatrix}
	0&0&\Phi^T\\
	\Phi^T&0&0\\
	0&\Phi^T&0
\end{pmatrix}
	.
\end{equation*}
\end{propn}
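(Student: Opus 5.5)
The plan is to compute both matrices directly on an explicit $\mathbb{Z}$-basis of $U_r$. This is the same method as Propositions \ref{representation_of_V0_x_action}, \ref{representation_of_Vr} and \ref{U_alpha_y_action}, except that we now work modulo $S+U_\alpha$.

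\emph{Basis.} Put $r_1=r$, $r_2=\alpha(r+1)-1$ and $r_3=\alpha^2(r+1)-1$, all read mod $p$. For $1\leq i\leq p$ and $k=1,2,3$ let
\[e^{(k)}_i=(\nu^{r_k}-1)\nu^{i}\otimes\nu^{i-1},\]
which is the defining basis of $U(r_k)^\prime$ with indices taken mod $p$.
\begin{itemize}
\item Each $r_k$ lies in $\{1,\ldots,p-2\}$. For instance, $r_2\equiv 0$ forces $r\equiv\alpha^2-1$ and $r_2\equiv p-1$ forces $r\equiv -1$, both excluded, and $r_3$ is handled in the same way.
\item The three values are distinct, because $r+1,\ \alpha(r+1),\ \alpha^2(r+1)$ are distinct nonzero residues.
\item Every $r_k$ avoids $\alpha-1$ and $\alpha^2-1$, since $r+1\notin\{\alpha,\alpha^2\}$ and hence no $\alpha^j(r+1)$ equals $\alpha$ or $\alpha^2$.
\end{itemize}
So $U_r^\prime$ is a direct sum of three of the summands $U(s)^\prime$ of $U^\prime$, and it is disjoint from $U_\alpha$. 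Since $S\oplus U^\prime$ has a $\mathbb{Z}$-basis containing the bases of $S+U_\alpha$ and of $U_r^\prime$ as disjoint subsets, $\eta$ maps the $3p$ elements $e^{(k)}_i$ to $\mathbb{Z}$-linearly independent elements. Hence these images form a $\mathbb{Z}$-basis of $U_r$. We order them in blocks $k=1,2,3$.

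\emph{$x$-action.} Multiplication by $x$ multiplies both tensor factors by $\nu$. It therefore sends $e^{(k)}_i$ to $e^{(k)}_{i+1}$ and $e^{(k)}_p$ to $e^{(k)}_1$, exactly as in Proposition \ref{representation_of_V0_x_action}. This gives three diagonal $\Psi$ blocks, and no reduction modulo $S+U_\alpha$ is needed.

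\emph{$y$-action.} This is the real computation. With the Galois action used before, set $j=\alpha(i-1)$. Then
\[e^{(1)}_i\,y^{-1}=(\nu^{\alpha r}-1)\nu^{\alpha i}\otimes\nu^{\alpha(i-1)}=(\nu^{\alpha r+\alpha}-\nu^{\alpha})\nu^{j}\otimes\nu^j .\]
Splitting $\nu^{\alpha(r+1)}-\nu^\alpha=(\nu^{\alpha(r+1)}-\nu)-(\nu^\alpha-\nu)$, as in the proof of Proposition \ref{U_alpha_y_action}, gives
\[e^{(1)}_i\,y^{-1}=(\nu^{r_2}-1)\nu^{j+1}\otimes\nu^{j}-(\nu^{\alpha-1}-1)\nu^{j+1}\otimes\nu^{j}.\]
The first term is $e^{(2)}_{\alpha(i-1)+1}$. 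The second term lies in $U(\alpha-1)^\prime\subset U_\alpha$, so it is killed by $\eta$. The same manipulation sends $e^{(2)}_i$ to $e^{(3)}_{\alpha(i-1)+1}$ modulo $U(\alpha-1)^\prime$. For $e^{(3)}_i$ it sends the element to $e^{(1)}_{\alpha(i-1)+1}$ modulo $U(\alpha-1)^\prime$, using $\alpha\bigl(\alpha^2(r+1)-1\bigr)+\alpha\equiv r+1$ and $\alpha^3\equiv1$.

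Thus, modulo $S+U_\alpha$, the element $y^{-1}$ permutes the basis by the cyclic block pattern $1\to2\to3\to1$, and within each block the index map is $i\mapsto\alpha(i-1)+1$, which is precisely $\Phi^T$. Reading this off in the paper's convention for $\rho(y)$, which records the $y^{-1}$ action exactly as in Proposition \ref{representation_of_Vr}, gives $\Phi^T$ in positions $(2,1)$, $(3,2)$ and $(1,3)$, as claimed.

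I expect the only delicate point to be the bookkeeping rather than any conceptual step. One must use a consistent choice among $y$, $y^{-1}$ and $y^2$, check that the block positions match those in Proposition \ref{representation_of_Vr}, and confirm that each discarded correction term lies in $U_\alpha$ and never in $S$ or another $U(s)^\prime$. I will settle this first by a direct check of the index shift $\alpha(i-1)+1$ and of the identity $r_{k+1}=\alpha(r_k+1)-1$, taken cyclically mod $p$.
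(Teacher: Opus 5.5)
Your proposal is correct and follows essentially the same route as the paper. You compute the $x$-action directly, and for the $y$-action (you use $y^{-1}$, the paper uses $y^2$, which coincide) you split $\nu^{\alpha(r_k+1)}-\nu^\alpha=(\nu^{\alpha(r_k+1)}-\nu)-(\nu^\alpha-\nu)$, so that the correction term lies in $U(\alpha-1)^\prime$ and is killed by $\eta$. Your added checks on the $r_k$ and on the quotient basis usefully expand what the paper calls ``clear''; note only that ruling out $\alpha^j(r+1)\in\{\alpha,\alpha^2\}$ also needs $r+1\not\equiv 1$, which holds because $r\geq 1$.
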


\begin{proof}
Start by labelling $g_i=(\nu^r-1)\nu^i\otimes\nu^{i-1}$, $g_{p+j}=(\nu^{\alpha(r+1)-1}-1)\nu^j\otimes\nu^{j-1}$ and $g_{2p+k}=(\nu^{\alpha^2(r+1)-1}-1)\nu^k\otimes\nu^{k-1}$, for $1\leq i,\,j,\,k\leq p$. It is clear that $\eta(U(r)^\prime+U(\alpha(r+1)-1)^\prime+U(\alpha^2(r+1)-1)^\prime)$ has $\mathbb{Z}$-basis $\{\eta(g_i),\,\eta(g_{p+j}),\,\eta(g_{2p+k})\:|\:1\leq i,\,j,\,k\leq p\}$.

The matrix $\rho_{U_r}(x^{-1})$ is clear. As such, we focus only on $\rho_{U_r}(y)$. We have,
\begin{eqnarray*}
g_i\cdot y^2=&(\nu^{\alpha r}-1)\nu^{\alpha i}\otimes \nu^{\alpha(i-1)}\\
=&(\nu^{\alpha(r+1)-1}-\nu^{\alpha-1})\nu^{\alpha(i-1)+1}\otimes \nu^{\alpha(i-1)}\\
=&(\nu^{\alpha(r+1)-1}-1)\nu^{\alpha(i-1)+1}\otimes\nu^{\alpha(i-1)}-(\nu^{\alpha-1}-1)\nu^{\alpha(i-1)+1}\otimes\nu^{\alpha(i-1)}.
\end{eqnarray*}
It follows that $\eta(g_i)y^2=\eta(g_{p+\alpha(i-1)+1})$. Likewise,
\[
g_{p+i}\cdot y^2=(\nu^{\alpha^2(r+1)-\alpha}-1)\nu^{\alpha i}\otimes \nu^{\alpha(i-1)}=(\nu^{\alpha^2(r+1)-1}-1)\nu^{\alpha(i-1)+1}\otimes\nu^{\alpha(i-1)}-(\nu^{\alpha-1}-1)\nu^{\alpha(i-1)+1}\otimes\nu^{\alpha(i-1)},
\]
and finally,
\[
g_{2p+i}\cdot y^2=(\nu^{r+1-\alpha}-1)\nu^{\alpha i}\otimes \nu^{\alpha(i-1)}=(\nu^{r}-1)\nu^{\alpha(i-1)+1}\otimes\nu^{\alpha(i-1)}-(\nu^{\alpha-1}-1)\nu^{\alpha(i-1)+1}\otimes\nu^{\alpha(i-1)}.
\]
It therefore follows that $\eta(g_{p+i})y^2=\eta(g_{2p+\alpha(i-1)+1})$ and $\eta(g_{2p+i})y^2=\eta(g_{\alpha(i-1)+1})$.
\end{proof}

We have therefore shown that $x,\,y$ act the same on both $U_r$ and $V_r$ (see Section \ref{sec:theorem_B_part_1}). As we have already shown $V_r\cong\Lambda$, the following is immediate:

\begin{propn}
\label{Free_part_after_quotienting_individual}
$\eta(U(r)^\prime+U(\alpha(r+1)-1)^\prime+U(\alpha^2(r+1)-1)^\prime)\cong\Lambda$ for $r\neq \alpha-1,\,\alpha^2-1$.
\end{propn}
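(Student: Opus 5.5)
The plan is to reduce the statement to the isomorphism $V_r\cong\Lambda$ already established in Section \ref{sec:theorem_B_part_1}, by showing that $U_r$ and $V_r$ carry literally the same integral representation with respect to suitably ordered $\mathbb{Z}$-bases.

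First, I would confirm that $\{\eta(g_i),\,\eta(g_{p+j}),\,\eta(g_{2p+k})\mid 1\leq i,j,k\leq p\}$ is genuinely a $\mathbb{Z}$-basis of $U_r=\eta(U_r^\prime)$, and that $U_r$ is a $\Lambda$-submodule of the quotient.

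\begin{itemize}
\item \emph{Basis.} By Facts 1--4 and the elementary basis change $U(s)\rightsquigarrow U(s)^\prime$, the set $\{S\}\cup\{\text{basis of }U(s)^\prime\}_s$ is a $\mathbb{Z}$-basis of $S+U^\prime$. The kernel $S+U_\alpha$ is spanned by a subset of this basis, namely $S$ together with the bases of $U(\alpha-1)^\prime$ and $U(\alpha^2-1)^\prime$. Hence the quotient is free on the images of the remaining basis vectors.
\item \emph{Distinct indices.} I must check that $r$, $\alpha(r+1)-1$ and $\alpha^2(r+1)-1$ are distinct residues, none equal to $\alpha-1$ or $\alpha^2-1$, and none equal to $p-1$. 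Write $r+1=s$. The three indices are $s-1,\ \alpha s-1,\ \alpha^2 s-1$, which are distinct because $s\not\equiv 0$ and $\alpha\not\equiv1$. The index $p-1$ would require $s\equiv0$, which is excluded. The index $\alpha-1$ occurs only if $s\in\{1,\alpha^2,\alpha\}\cdot\alpha$ up to the orbit, i.e.\ exactly when $r\in\{\alpha-1,\alpha^2-1,0\}$, and these are excluded.
\item \emph{Submodule.} The three indices form the orbit of $s$ under multiplication by $\alpha$, so $U_r^\prime+(S+U_\alpha)$ is $\Lambda$-stable and $U_r$ is a $\Lambda$-submodule.
\end{itemize}

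Second, I would use the preceding proposition, which computes $\rho_{U_r}(x^{-1})$ and $\rho_{U_r}(y)$ in this basis. These are exactly the matrices $\rho_{V_r}(x^{-1})$ and $\rho_{V_r}(y)$ of Propositions \ref{representation_of_V0_x_action} and \ref{representation_of_Vr}, with the same block $\Psi$ and the same permutation block $\Phi^T$. Since $x$ and $y$ generate $G(p,3)$, the $\mathbb{Z}$-linear map $V_r\to U_r$ sending $e_m\mapsto\eta(g_m)$ for $1\leq m\leq 3p$ is a $\Lambda$-isomorphism. Composing with the isomorphism $V_r\cong\Lambda$ given by the matrix $X$ in the proposition following Proposition \ref{regular_rep} yields $U_r\cong\Lambda$.

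The main obstacle is the basis and index bookkeeping in the first step: verifying that the correction terms $-(\nu^{\alpha-1}-1)\nu^{\alpha(i-1)+1}\otimes\nu^{\alpha(i-1)}$ lie in $U_\alpha$ and so vanish under $\eta$, and that $U_r$ has rank exactly $3p$ in the quotient. Once that is secured, the isomorphism follows immediately from matching matrices.
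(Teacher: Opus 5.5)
Your proposal is correct and takes the same route as the paper. The paper also reads off from the preceding proposition that $x$ and $y$ act on the basis $\{\eta(g_m)\}$ of $U_r$ by exactly the matrices $\rho_{V_r}(x^{-1})$ and $\rho_{V_r}(y)$, and then transports the isomorphism $V_r\cong\Lambda$. Your extra bookkeeping fills in what the paper calls ``clear'' or leaves to the subsequent corollary: that the images of the remaining basis vectors of $S+U^\prime$ form a $\mathbb{Z}$-basis of the quotient, and that $r,\ \alpha(r+1)-1,\ \alpha^2(r+1)-1$ are three distinct admissible indices because they come from the $\alpha$-orbit of $r+1\notin\{1,\alpha,\alpha^2\}$.
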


\begin{corollary}
\label{Free_part_after_quotienting}
$(S+U^\prime)/(S+U_\alpha)\cong\Lambda^{d-1}$.
\end{corollary}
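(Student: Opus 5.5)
The plan is to assemble the quotient $(S+U^\prime)/(S+U_\alpha)$ from the pieces $\eta(U_r^\prime)$ of Proposition \ref{Free_part_after_quotienting_individual}, show that they partition it, and then count copies of $\Lambda$.

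First, organise the index set. Let $\mathcal{I}=\{1,\ldots,p-2\}\setminus\{\alpha-1,\alpha^2-1\}$, where indices are read mod $p$. Consider the map $\sigma(r)=\alpha(r+1)-1 \pmod p$, i.e.\ the affine map $r+1\mapsto\alpha(r+1)$ after the shift $s=r+1$. Under $s=r+1$ the indices $r\in\{1,\ldots,p-2\}$ correspond to $s\in\{2,\ldots,p-1\}$, that is, $\mathbb{F}_p^\ast\setminus\{1\}$.

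Multiplication by $\alpha$ partitions $\mathbb{F}_p^\ast$ into $d$ cosets of $\langle\alpha\rangle$, each of size $3$ since $\alpha$ has order $3$. Removing the coset $\{1,\alpha,\alpha^2\}$ leaves exactly the indices $s\neq 1,\alpha,\alpha^2$, which means $r\neq 0,\alpha-1,\alpha^2-1$. So $\mathcal{I}$ splits into $d-1$ orbits $\{r,\sigma(r),\sigma^2(r)\}$, and these are precisely the index triples appearing in $U_r^\prime$. Choose one representative $r_1,\ldots,r_{d-1}$ from each orbit.

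Second, show the decomposition
\[
(S+U^\prime)/(S+U_\alpha)=\eta(U_{r_1}^\prime)\oplus\cdots\oplus\eta(U_{r_{d-1}}^\prime).
\]
As abelian groups, $S+U^\prime=S\oplus\bigoplus_{r=1}^{p-2}U(r)^\prime$. This uses Facts 1--4, transported along the elementary basis change $U(r)\to U(r)^\prime$, which is unimodular within each fixed $\otimes\nu^i$ slot. Quotienting by $S\oplus U(\alpha-1)^\prime\oplus U(\alpha^2-1)^\prime$ gives a free abelian group with basis $\{\eta(g)\}$, where $g$ runs over the bases of $U(r)^\prime$ for $r\in\mathcal{I}$. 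This basis is the disjoint union of the bases of the $\eta(U_{r_k}^\prime)$, by the orbit partition above.

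Each $\eta(U_{r_k}^\prime)$ is a $\Lambda$-submodule: the computations preceding Proposition \ref{Free_part_after_quotienting_individual} show that $y^2$, and obviously $x$, map its basis into itself modulo $U(\alpha-1)^\prime$. Hence the direct sum holds as $\Lambda$-modules. Proposition \ref{Free_part_after_quotienting_individual} gives $\eta(U_{r_k}^\prime)\cong\Lambda$, and the result $\Lambda^{d-1}$ follows.

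As a consistency check, the $\mathbb{Z}$-ranks agree: $(p-4)p=3p(d-1)$ because $p-4=3(d-1)$.

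The main obstacle is the second step, specifically justifying that $\{S\}\cup\bigcup_r\{\text{basis of }U(r)^\prime\}$ is genuinely a $\mathbb{Z}$-basis of $\bar{I}_C\otimes\bar{I_C^\ast}$, so that the quotient is torsion free with the stated basis. I would handle this by noting that the passage from $U(r)$ to $U(r)^\prime$ is triangular with unit diagonal inside each slot $-\otimes\nu^i$: the element $(\nu^r-1)\nu^{i+1}$ is the sum of the first $r$ generators of that slot. Combined with Fact 4, this gives the basis claim. A minor secondary check is that the index wrap-around mod $p$ never produces $r=0$ or $r=p-1$, which is handled by the coset argument.
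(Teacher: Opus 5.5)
Your proposal is correct and follows essentially the paper's route: you partition the indices other than $\alpha-1,\alpha^2-1$ into the triples $\{r,\alpha(r+1)-1,\alpha^2(r+1)-1\}$, apply Proposition \ref{Free_part_after_quotienting_individual} to each triple, and count $d-1$ of them. Your shift $s=r+1$, which turns the triples into cosets of $\langle\alpha\rangle$ in $\mathbb{F}_p^\ast$, is a tidier form of the paper's case-by-case check, and you also spell out the $\mathbb{Z}$-basis and direct-sum justification that the paper leaves as a ``straightforward counting argument''.
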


\begin{proof}
When defining $U_r$, we suppose $r\neq\alpha-1,\,\alpha^2-1$. In particular, we note that neither $\alpha(r+1)-1$ nor $\alpha^2(r+1)-1$ can equal $\alpha-1$ or $\alpha^2-1$. For suppose otherwise. If $\alpha(r+1)-1\equiv\alpha-1\:(mod\:p)$, then $p|\alpha r$ and so $p|\alpha$ or $p|r$, both of which are not true. Likewise, if $\alpha(r+1)-1=\alpha^2-1$ then $p|\alpha(r+1-\alpha)$. Since $p\not|\alpha$, this means $r=\alpha-1$, a possibility we have precluded. Similar arguments apply for $\alpha^2(r+1)-1=\alpha-1$ or $\alpha^2-1$.

Next, we consider $U_r$ and $U_s$ for some $r\neq s$. If $\alpha(r+1)-1= s$, then this is equivalent to $r=\alpha^2(s+1)-1$ and $\alpha^2(r+1)-1=\alpha(s+1)-1$. In other words, we have just rearranged the $U(r)^\prime$. Similar arguments apply for any other combination. A straightforward counting argument now shows we have $d-1$ copies of $\eta(U_r)$, each of which gives one copy of $\Lambda$.
\end{proof}

We therefore have the following split exact sequence $0\to S+U_\alpha\to S+U^\prime\to\Lambda^{d-1}\to 0$. Thus, $S+U^\prime\cong_\Lambda \bar{I}_C\otimes\bar{I}_C^\ast\cong_\Lambda (S+U_\alpha)\oplus\Lambda^{d-1}$. Proposition \ref{S_plus_U_alpha_is_K} now gives the following:

\begin{propn}
\label{R(1)_otimes_R(2)_is_K_oplus_free}
$\bar{I}_C\otimes\bar{I}_C^\ast\cong K\oplus\Lambda^{d-1}$.
\end{propn}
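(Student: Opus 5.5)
The plan is to assemble the statement from the three stages already carried out; only the splitting step needs care. First, I identify $\bar{I}_C\otimes\bar{I}_C^\ast$ with $[\natural(S))\oplus U$ as $\Lambda_0$-modules using Fact 4. The elementary basis changes replacing each $U(r)$ by $U(r)^\prime$ are unimodular over $\mathbb{Z}$, so $S$ together with the bases of the $U(r)^\prime$ is again a $\mathbb{Z}$-basis of the whole tensor product. Because the whole module is stable under $x$ and $y$, this gives an equality of $\Lambda$-lattices $\bar{I}_C\otimes\bar{I}_C^\ast=S+U^\prime$. The $y$-action is the diagonal Galois action described in Section \ref{sec:theorem_B_part_1}.

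Second, I check that $K^\prime=S+U_\alpha$ is a $\Lambda$-submodule. This follows from Propositions \ref{U_alpha_x_action}, \ref{U_alpha_y_action} and \ref{y_applied_to_S}: the images of $S$ and of the basis of $U_\alpha$ under $x$ and $y^2$ stay in the $\mathbb{Z}$-span of $S$ and $U_\alpha$. By Proposition \ref{S_plus_U_alpha_is_K}, the intertwining matrix $X$ gives $K^\prime\cong K$. The quotient $(S+U^\prime)/K^\prime$ is torsion free, since $K^\prime$ is spanned by part of a $\mathbb{Z}$-basis of $S+U^\prime$. It therefore has $\mathbb{Z}$-basis the images of the bases of the remaining $U(r)^\prime$, with $r\neq\alpha-1,\alpha^2-1$.

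Third, I use the orbit decomposition $r\mapsto\alpha(r+1)-1\mapsto\alpha^2(r+1)-1$. This is a permutation of order three on $\{1,\dots,p-2\}\setminus\{\alpha-1,\alpha^2-1\}$ with no fixed points; a fixed point would force $r+1\equiv 0$ or $\alpha\equiv1$. Together with Proposition \ref{Free_part_after_quotienting_individual} and Corollary \ref{Free_part_after_quotienting}, this shows that the quotient is a direct sum of $(p-3)/3=d-1$ copies of $\eta(U_r^\prime)\cong\Lambda$. So we have a short exact sequence $0\to K\to\bar{I}_C\otimes\bar{I}_C^\ast\to\Lambda^{d-1}\to0$. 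Since $\Lambda^{d-1}$ is projective, it splits, which gives the claimed isomorphism $\bar{I}_C\otimes\bar{I}_C^\ast\cong K\oplus\Lambda^{d-1}$. A rank check confirms this: $(p-1)^2=(2p+1)+3p(d-1)$, since $3(d-1)=p-4$.

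The main obstacle is the second step. It requires checking that the quotient map genuinely turns the $y$-action on $U_r^\prime$ into the permutation action, which amounts to $y^2$ sending $g_i$ to $g_{p+\alpha(i-1)+1}$ modulo $U(\alpha-1)^\prime\subset K^\prime$. One must also confirm that $\Lambda$ is generated by $x$ and $y^2$, so checking those two generators suffices. Both points are covered by the computations preceding the statement. If needed, I would recheck the single exceptional index in Proposition \ref{y_applied_to_S}, since that is where $S$ fails to be $y$-fixed.
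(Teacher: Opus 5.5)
Your proposal is correct and follows the paper's own route. You identify the tensor product with $S+U^\prime$, recognise $S+U_\alpha\cong K$ as a $\Lambda$-sublattice, show the quotient is $\Lambda^{d-1}$ via the orbits $r\mapsto\alpha(r+1)-1$, and split the sequence because $\Lambda^{d-1}$ is projective. One harmless arithmetic slip: the set $\{1,\dots,p-2\}\setminus\{\alpha-1,\alpha^2-1\}$ has $p-4$ elements, so the number of orbits is $(p-4)/3=d-1$, not $(p-3)/3$; your later rank check already uses the correct identity $3(d-1)=p-4$.
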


\section{$R(1)\otimes L\sim R(3)$}
\label{sec:theorem_B_part_4}

In this section we show $\bar{I}_C\otimes L\cong \bar{I_C^\ast}\oplus\Lambda^{2d}$, where $L$ is as defined in Section \ref{sec:theorem_B_part_1}. It is tempting to conjecture $L\sim K(2)$, but at present this is unknown.

\begin{propn}
\label{Theorem_F4_exact_sequence_1}
There exists the following exact sequence of $\Lambda$-modules,
\[0\to K\to\Lambda^{2d+1}\to \bar{I}_C\otimes L\to 0.\]
\end{propn}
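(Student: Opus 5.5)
Starting from the exact sequence $0\to L^\ast\to\Lambda\to\bar I_C\to0$ of Proposition \ref{K(1)_sim_L_dual} will not work: tensoring it with $\bar I_C$ gives $\bar I_C\otimes L^\ast$, not $\bar I_C\otimes L$. I will instead use its dual, which exhibits $L$ as a quotient of $\Lambda$. Since all modules are lattices, dualising gives $0\to\bar I_C^\ast\to\Lambda\to L\to0$, using $\Lambda^\ast\cong\Lambda$. Tensoring over $\mathbb{Z}$ with $\bar I_C$ preserves exactness, because every term is $\mathbb{Z}$-free. This gives
\[0\to\bar I_C\otimes\bar I_C^\ast\to\bar I_C\otimes\Lambda\to\bar I_C\otimes L\to0.\]

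The next step is to identify the two left-hand terms. For the middle term, Frobenius reciprocity gives $\bar I_C\otimes\Lambda\cong\Lambda^{\mathrm{rk}_\mathbb{Z}\bar I_C}=\Lambda^{p-1}=\Lambda^{3d}$. For the left term, Proposition \ref{R(1)_otimes_R(2)_is_K_oplus_free} gives $\bar I_C\otimes\bar I_C^\ast\cong K\oplus\Lambda^{d-1}$. Substituting both, we obtain
\[0\to K\oplus\Lambda^{d-1}\xrightarrow{\ \iota\ }\Lambda^{3d}\to\bar I_C\otimes L\to0.\]

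The remaining task is to remove the free summand $\Lambda^{d-1}$ from the left-hand term, and this requires care. I would proceed as in the proof of Proposition \ref{K(1)_sim_L_dual}. Restrict $\iota$ to $\Lambda^{d-1}$. By the destabilization theorem (Propositions 22.1 and 34.2 of \cite{Johnson2021}), the quotient $P=\Lambda^{3d}/\iota(\Lambda^{d-1})$ is projective. Since $\Lambda^{d-1}$ is projective and hence injective among lattices, the sequence $0\to\Lambda^{d-1}\to\Lambda^{3d}\to P\to0$ splits. Thus $P\oplus\Lambda^{d-1}\cong\Lambda^{3d}$, so $P$ is stably free of rank $2d+1$. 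By SFC for $\Lambda$ we get $P\cong\Lambda^{2d+1}$. Passing to quotients then yields the required exact sequence $0\to K\to\Lambda^{2d+1}\to\bar I_C\otimes L\to0$.

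I expect the main obstacle to be this destabilization step. One must check that $\iota(\Lambda^{d-1})$ sits as a pure sublattice, so that the quotient is torsion free, and that the hypotheses of Johnson's theorem are met. The paper invokes exactly this argument in the proof of Proposition \ref{K(1)_sim_L_dual}, so I would mirror that reasoning verbatim. Injectivity of free lattices gives the splitting directly, and that resolves the purity question.
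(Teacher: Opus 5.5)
Your proposal is correct and follows the paper's proof: dualise to $0\to\bar{I_C^\ast}\to\Lambda\to L\to 0$, tensor with $\bar{I}_C$, identify the terms as $K\oplus\Lambda^{d-1}$ and $\Lambda^{3d}$, and destabilise. On the purity concern you flagged, the order of the argument matters: torsion-freeness of $\Lambda^{3d}/\iota(\Lambda^{d-1})$ holds because it is an extension of the lattice $\bar{I}_C\otimes L$ by the lattice $K$, and this has to be established first, since relative injectivity of $\Lambda^{d-1}$ splits only sequences whose cokernel is already a lattice.
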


\begin{proof}
Following the proof of Proposition \ref{K(1)_sim_L_dual} we have the exact sequence $0\to \bar{I_C^\ast}\to\Lambda\to L\to 0$. We now apply the exact functor $\bar{I}_C\otimes -$ to yield $0\to\bar{I}_C\otimes\bar{I_C^\ast}\to\Lambda^{3d}\to\bar{I}_C\otimes L\to 0$. By Proposition \ref{R(1)_otimes_R(2)_is_K_oplus_free}, $\bar{I}_C\otimes\bar{I_C^\ast}\cong K\oplus\Lambda^{d-1}$ resulting in $0\to K\oplus\Lambda^{d-1}\to\Lambda^{3d}\to\bar{I}_C\otimes L\to 0$. The result now follows from Johnson's `destabilization theorem'.
\end{proof}

\begin{propn}
\label{Theorem_B4_main_result}
$\bar{I}_C\otimes L\cong \bar{I_C^\ast}\oplus\Lambda^{2d}$.
\end{propn}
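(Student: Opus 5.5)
Starting from the exact sequence of Proposition \ref{Theorem_F4_exact_sequence_1}, $0\to K\to\Lambda^{2d+1}\to\bar{I}_C\otimes L\to 0$, the aim is to show that $\bar{I}_C\otimes L$ lies in the stable class of $R(3)\cong\bar{I_C^\ast}$. We then use straightness of $[R(3)]$ (Proposition \ref{Ri_are_straight}) together with a rank count to get the isomorphism itself.

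First we compare with the sequence $0\to K\to\Lambda\to\bar{I_C^\ast}\to 0$, which comes from Proposition \ref{Lambda_quotient_K_is_R3}. Both sequences have kernel $K$, so Schanuel's Lemma does not apply directly, since it compares kernels, not cokernels. We therefore dualise. Every module involved is a lattice, so the duals
\[0\to(\bar{I}_C\otimes L)^\ast\to\Lambda^{2d+1}\to K^\ast\to 0,\qquad 0\to\bar{I_C^\ast}{}^\ast\to\Lambda\to K^\ast\to 0\]
are exact. Schanuel's Lemma now gives $(\bar{I}_C\otimes L)^\ast\oplus\Lambda\cong\bar{I}_C\oplus\Lambda^{2d+1}$. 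Dualising again, using $M^{\ast\ast}\cong M$ for lattices and $\Lambda^\ast\cong\Lambda$, yields
\[(\bar{I}_C\otimes L)\oplus\Lambda\cong\bar{I_C^\ast}\oplus\Lambda^{2d+1}.\]
An alternative route is to use that $\Lambda$-lattices are injective relative to lattices (Theorem 28.5 of \cite{Johnson2003}) and form the pushout along the two monomorphisms $K\to\Lambda^{2d+1}$ and $K\to\Lambda$. That argument also produces the stable isomorphism $(\bar{I}_C\otimes L)\oplus\Lambda\cong\bar{I_C^\ast}\oplus\Lambda^{2d+1}$.

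Next we check ranks. We have $rk_\mathbb{Z}(\bar{I}_C)=p-1=3d$, and $rk_\mathbb{Z}(L)=(p-1)^2-3p(d-1)=3p+1-2\cdot 3d\cdot\ldots$; more directly, from $0\to\bar{I_C^\ast}\to\Lambda\to L\to 0$ we get $rk_\mathbb{Z}(L)=3p-3d=2p+1$. Hence $rk_\mathbb{Z}(\bar{I}_C\otimes L)=3d(2p+1)$. On the other side, $rk_\mathbb{Z}(\bar{I_C^\ast}\oplus\Lambda^{2d})=3d+6pd=3d(2p+1)$, so the ranks agree.

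Finally, $\bar{I}_C\otimes L$ and $\bar{I_C^\ast}\oplus\Lambda^{2d}$ are in the same stable class $[R(3)]$ and have equal $\mathbb{Z}$-rank, so they sit at the same level. Since $[R(3)]$ is straight by Proposition \ref{Ri_are_straight}, each level contains a unique isomorphism class, and the isomorphism $\bar{I}_C\otimes L\cong\bar{I_C^\ast}\oplus\Lambda^{2d}$ follows, as in the proof of Theorem \ref{Theorem_1}. The main care is needed in the Schanuel step, specifically in confirming that all modules are lattices so that dualisation preserves exactness. That is clear here, because $\bar{I}_C\otimes L$ is a tensor product of $\mathbb{Z}$-free modules.
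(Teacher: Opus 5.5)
Your proof is correct and follows the paper's argument. The paper applies the dual form of Schanuel's Lemma to $0\to K\to\Lambda^{2d+1}\to\bar{I}_C\otimes L\to 0$ and $0\to K\to\Lambda\to\bar{I_C^\ast}\to 0$ to get $(\bar{I}_C\otimes L)\oplus\Lambda\cong\bar{I_C^\ast}\oplus\Lambda^{2d+1}$, and then concludes by straightness of $[R(3)]$, exactly as you do; both your dualise-twice argument and your pushout argument are valid justifications of that dual Schanuel step. The rank check is automatic from the stable isomorphism, and you should delete the aborted expression ``$3p+1-2\cdot 3d\cdot\ldots$'', since in any case $(p-1)^2-3p(d-1)=3p-3d=2p+1$.
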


\begin{proof}
Recall that we have the exact sequence $0\to K\to\Lambda\to\bar{I_C^\ast}\to 0$. Using this and Proposition \ref{Theorem_F4_exact_sequence_1}, we apply the dual form of Schanuel's Lemma to obtain, $(\bar{I}_C\otimes L)\oplus\Lambda\cong\bar{I_C}^\ast\oplus\Lambda^{2d+1}$. The result now follows from Proposition \ref{Ri_are_straight}
 (i.e. $[\bar{I_C^\ast}]$ is straight).
\end{proof}

\section{$R(1)\otimes R(2)\sim K(2)$}
\label{sec:theorem_B_part_3}

Finally, we show $\bar{I}_C\otimes\overline{(x-1)I_C}\cong Y^\ast\oplus\Lambda^{d-1}$. First, observe the following $\mathbb{Z}$-basis for $\overline{(x-1)I_C}\otimes\bar{I_C^\ast}$: $\{(\nu-1)^2\nu^i\otimes\nu^j\:|\:0\leq i,\,j\leq p-2\}$. We work with the above $\mathbb{Z}$-basis and then dualise. We define the following modules for $1\leq r\leq p-2$, 
\begin{equation}
W(r)=span_\mathbb{Z}\{(\nu-1)^2\nu^{r+k}\otimes\nu^k\:|\:0\leq k\leq p-1\}\subset (x-1)I_C\otimes I_C^\ast.
\end{equation}
The arguments of Section \ref{sec:preliminaries_1} apply directly to the present modules $W(r)$, since they use only the action of $x$. Indeed, multiplication by $(\nu-1)^2$ induces a
$\Lambda_0$-isomorphism $\Phi:I_C^\ast\otimes I_C^\ast\to (x-1)I_C\otimes I_C^\ast$, where $a\otimes b\mapsto(\nu-1)^2a\otimes b$. If $V(r)$ denotes the module constructed in Section \ref{sec:preliminaries_1}, then $\Phi\bigl(\nu^{r+k}\otimes\nu^k\bigr)
=(\nu-1)^2\nu^{r+k}\otimes\nu^k$, and hence $\Phi(V(r))=W(r)$. Consequently, $W:=\bigoplus_{r=1}^{p-2}W(r)\cong_{\Lambda_0}\Lambda_0^{p-2}$. Moreover, if $T$ is as before, then $R=\Phi(T)=\sum^{p-2}_{i=0}(\sum^{p-2}_{j=i}(\nu-1)^2\nu^i\otimes\nu^j)$. Since $\Phi$ is a $\Lambda_0$-isomorphism and $I_C^\ast\otimes I^\ast_C =[T)\oplus V$, we obtain $(x-1)I_C\otimes I_C^\ast=[R)\oplus W\cong_{\Lambda_0}\mathbb{Z}\oplus\Lambda_0^{p-2}$. In summary, all relevant $\Lambda_0$-module conclusions carry over unchanged.

If we were to mirror the techniques of Section \ref{sec:theorem_B_part_1} directly, then we would encounter difficulties when introducing the $y$-action. Such difficulties arise due to the extra factor $(\nu-1)^2$ which becomes $(\nu^\alpha-1)^2$ when applying the cyclotomic $y$-action. This latter term is problematic using a direct adoption of the techniques of Section \ref{sec:theorem_B_part_1}. As such, our approach differs slightly, though the philosophy remains the same; namely, introduce the $y$-action and find the `free part'. What remains shall be the module $Y$.

We first set $E=span_{\mathbb{Z}}\{\nu,\nu^2,\ldots,\nu^{p-2}\}\subseteq I_C^\ast$. Thus, $I_C^\ast=\mathbb{Z}\oplus E$. For $f\in E$, define
\begin{equation}
W(f)=
span_{\mathbb Z}
\left\{
(\nu-1)^2f\nu^k\otimes\nu^k
\mid 0\leq k\leq p-1
\right\}.
\end{equation}
In particular, for $1\leq r\leq p-2$, $W(\nu^r)=W(r)$. Now, let $\sigma_\alpha:I_C^\ast\to I_C^\ast$ be the cyclotomic Galois automorphism given by $\sigma_\alpha(\nu)=\nu^\alpha$, and set $s_\alpha:=1+\nu+\cdots+\nu^{\alpha-1}$. Define an additive map $\tau:I_C^\ast\longrightarrow I_C^\ast$ by $\tau(f)=s_\alpha^2\sigma_\alpha(f)$.

\begin{lemma}
The map $\tau$ is a Galois structure of order three in which $W(f)y^{-1}=W(\tau(f))$ for every $f\in E$ such that $\tau(f)\in E$.
\end{lemma}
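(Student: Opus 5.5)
The plan is to verify the three defining properties of a Galois structure for $\tau$ directly, and then to compute the $y^{-1}$-action on the spanning set of $W(f)$. Throughout I will work in the ring $I_C^\ast=\Lambda_0/(\Sigma)\cong\mathbb{Z}[\zeta_p]$. This ring is an integral domain, $\nu$ corresponds to $\zeta_p$, and $\sigma_\alpha$ is the ring automorphism $\zeta_p\mapsto\zeta_p^\alpha$. The Galois action recalled in Section \ref{sec:theorem_B_part_1} is $\nu^i\cdot y^{-1}=\nu^{\alpha i}$, so on $\overline{I_C^\ast}$ the element $y^{-1}$ acts as $\sigma_\alpha$.

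The first step is to check that $\tau$ is a Galois structure. Additivity of $\tau$ is clear, since $\sigma_\alpha$ is additive and $\tau$ then multiplies by the fixed element $s_\alpha^2$. Compatibility with $x$ follows from multiplicativity of $\sigma_\alpha$:
\[
\tau(f\nu)=s_\alpha^2\sigma_\alpha(f)\nu^\alpha=\tau(f)\cdot\theta(x).
\]
For $\tau^3=Id$, note that since $\sigma_\alpha$ is a ring homomorphism,
\[
\tau^3(f)=s_\alpha^2\,\sigma_\alpha(s_\alpha)^2\,\sigma_\alpha^2(s_\alpha)^2\,\sigma_\alpha^3(f).
\]
Here $\sigma_\alpha^3=Id$ because $\alpha^3\equiv1\pmod p$. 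In the domain $\mathbb{Z}[\zeta_p]$ we have $s_\alpha=(\nu^\alpha-1)/(\nu-1)$, so the product telescopes:
\[
s_\alpha\,\sigma_\alpha(s_\alpha)\,\sigma_\alpha^2(s_\alpha)=\frac{\nu^\alpha-1}{\nu-1}\cdot\frac{\nu^{\alpha^2}-1}{\nu^\alpha-1}\cdot\frac{\nu^{\alpha^3}-1}{\nu^{\alpha^2}-1}=\frac{\nu^{\alpha^3}-1}{\nu-1}=1.
\]
Hence $\tau^3=Id$, and in particular $\tau$ is bijective. To see that the order is exactly three, I would observe that $\tau(1)=s_\alpha^2\neq1$. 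Indeed, $s_\alpha=1+\nu+\cdots+\nu^{\alpha-1}$ with $2\le\alpha\le p-2$ is a cyclotomic unit different from $\pm1$, as its expansion in the basis $\{1,\nu,\ldots,\nu^{p-2}\}$ shows directly.

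The second step is the identity $W(f)y^{-1}=W(\tau(f))$. Here the first tensor factor is regarded as lying in $(x-1)I_C=(\nu-1)^2I_C^\ast\subset I_C^\ast$; this is consistent with $\bar I_C=\overline{(\zeta_p-1)I_C^\ast}$, and $y$ acts on it by the same cyclotomic automorphism. Acting diagonally on a spanning element gives
\[
\bigl((\nu-1)^2f\nu^k\otimes\nu^k\bigr)y^{-1}=(\nu^\alpha-1)^2\sigma_\alpha(f)\nu^{\alpha k}\otimes\nu^{\alpha k}.
\]
Using $\nu^\alpha-1=(\nu-1)s_\alpha$, the right-hand side equals $(\nu-1)^2\tau(f)\nu^{k'}\otimes\nu^{k'}$ with $k'\equiv\alpha k\pmod p$. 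As $k$ runs over $\{0,\ldots,p-1\}$, so does $k'$, because $\alpha$ is invertible mod $p$ and $\nu^p=1$. The spanning set of $W(f)$ is therefore carried bijectively onto that of $W(\tau(f))$, which gives equality of $\mathbb{Z}$-spans. The hypothesis $\tau(f)\in E$ is needed only so that $W(\tau(f))$ is defined in the sense above.

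I expect the main obstacle to be bookkeeping rather than conceptual. The delicate point is making sure that the identification of $(x-1)I_C$ with the ideal $(\nu-1)^2I_C^\ast$ transports the $y$-action to $\sigma_\alpha$ on the nose, rather than to $\sigma_\alpha$ twisted by a unit. If the paper's convention instead gave an extra unit factor, I would absorb it into the definition of $\tau$ and rerun the same telescoping argument.
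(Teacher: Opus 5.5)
Your proposal is correct and follows essentially the same route as the paper. In both, additivity and $x$-compatibility come from $\sigma_\alpha$, $\tau^3=\operatorname{Id}$ comes from the telescoping product $s_\alpha\sigma_\alpha(s_\alpha)\sigma_\alpha^2(s_\alpha)=1$, and $W(f)y^{-1}=W(\tau(f))$ comes from $(\nu^\alpha-1)^2=(\nu-1)^2s_\alpha^2$ together with the fact that $k\mapsto\alpha k$ permutes $\mathbb{Z}/p\mathbb{Z}$. Your extra check that $\tau(1)=s_\alpha^2\neq1$, so that the order is exactly three, is a harmless addition; the paper omits it because its definition of a Galois structure only requires $\Theta^3=\operatorname{Id}$.
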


\begin{proof}
First, we show $\tau$ is a Galois structure of order three. It is clear that $\tau$ is additive since $\sigma_\alpha$ is. Moreover, since the action of $x$ on $\bar{I}_C^\ast$ is multiplication by \(\nu\), we have, $\tau(f\cdot x)=s_\alpha^2\sigma_\alpha(f\cdot x)=s_\alpha^2\sigma_\alpha(f)\nu^\alpha=\tau(f)x^\alpha=\tau(f)\theta(x)$. It therefore remains to show $\tau$ has order three. Note that $\nu^\alpha-1=(\nu-1)(1+\nu+\cdots+\nu^{\alpha-1})$ and so we have $(\nu^\alpha-1)^2=(\nu-1)^2s_\alpha^2$. Next, $\tau^2(f) =
\tau\bigl(s_\alpha^2\sigma_\alpha(f)\bigr)
= s_\alpha^2\sigma_\alpha(s_\alpha)^2
\sigma_\alpha^2(f)$, and hence $\tau^3(f)=s_\alpha^2\sigma_\alpha(s_\alpha)^2\sigma_\alpha^2(s_\alpha)^2\sigma_\alpha^3(f)$. Since $\sigma_\alpha^3=\operatorname{Id}$, $\sigma_\alpha^3(f)=f$, and so $\tau^3(f)=(s_\alpha\sigma_\alpha(s_\alpha)\sigma_\alpha^2(s_\alpha))^2f$. Using, $s_\alpha = (\nu^\alpha-1)/(\nu-1)$, we have,
\[
s_\alpha\sigma_\alpha(s_\alpha)\sigma_\alpha^2(s_\alpha)=
\frac{\nu^\alpha-1}{\nu-1}
\frac{\nu^{\alpha^2}-1}{\nu^\alpha-1}
\frac{\nu^{\alpha^3}-1}{\nu^{\alpha^2}-1}=1,
\]
where we have used $\alpha^3\equiv1\pmod p$. It follows that $\tau^3(f)=f$.

Finally, we suppose that $f\in E$ and $\tau(f)\in E$. By once more using $(\nu^\alpha-1)^2=(\nu-1)^2s_\alpha^2$,
\[
\left(
(\nu-1)^2f\nu^k\otimes\nu^k
\right)y^{-1}
=
(\nu^\alpha-1)^2 \sigma_\alpha(f)\nu^{\alpha k}\otimes\nu^{\alpha k} 
=
(\nu-1)^2\tau(f)\nu^{\alpha k} \otimes\nu^{\alpha k}.
\]
Since multiplication by $\alpha$ permutes the elements of
$\mathbb Z/p\mathbb Z$, this proves $W(f)y^{-1}=W(\tau(f))$. 
\end{proof}

\noindent For the Galois lattice $(I_C^\ast,\tau)$, the following is now clear.

\begin{propn}
$(I_C^\ast,\tau)\cong_\Lambda\overline{(x-1)I_C}$.
\end{propn}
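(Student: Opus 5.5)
The plan is to write down an explicit $\Lambda$-isomorphism. The natural candidate is multiplication by $(\nu-1)$, sending the Galois lattice $(I_C^\ast,\tau)$ to $\overline{(x-1)I_C}$. Since $\overline{(x-1)I_C}$ is the image of $\overline{I_C^\ast}$ under multiplication by $(\nu-1)^2$, I will work inside $\Lambda_0/(\Sigma)\cong\mathbb{Z}[\zeta_p]$. Concretely, I identify $(x-1)I_C$ with the ideal $(\zeta_p-1)^2\mathbb{Z}[\zeta_p]$ (carrying the $C_p$-action by $\nu$ and the $y$-action by $\sigma_\alpha$ up to the chosen convention). I then define
\[
\phi:(I_C^\ast,\tau)\to\overline{(x-1)I_C},\qquad \phi(f)=(\nu-1)^2f .
\]

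First, $\phi$ is an additive bijection onto $(\nu-1)^2I_C^\ast$. This holds because $\nu-1$ is a non-zero-divisor in the domain $\mathbb{Z}[\zeta_p]\cong I_C^\ast$, and $(\nu-1)^2I_C^\ast$ is exactly the image of $(x-1)I_C$ under the identification $I_C\cong I_C^\ast$ as $\Lambda_0$-modules. Second, $\phi$ commutes with $x$, since both sides act by multiplication by $\nu$. Third, and this is the key check, $\phi$ intertwines the Galois structures. Recall that on $\overline{(x-1)I_C}$ the element $y^{-1}$ acts by conjugation, namely $\sigma_\alpha$. Using $(\nu^\alpha-1)^2=(\nu-1)^2s_\alpha^2$ from the preceding lemma, we get
\[
\sigma_\alpha(\phi(f))=(\nu^\alpha-1)^2\sigma_\alpha(f)=(\nu-1)^2s_\alpha^2\sigma_\alpha(f)=\phi(\tau(f)).
\]
So $\phi\circ\tau=\sigma_\alpha\circ\phi$. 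Since the $\Lambda$-structure on a Galois lattice is $m\cdot x^ry^s=\Theta^{-s}(m\cdot x^r)$, this gives $\phi(m\cdot g)=\phi(m)\cdot g$ for all $g\in G(p,3)$. Hence $\phi$ is a $\Lambda$-isomorphism.

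The main obstacle is bookkeeping of conventions rather than mathematics. One has to confirm that the $y$-action on $\overline{(x-1)I_C}$, after transporting through $I_C\cong\mathbb{Z}[\zeta_p]$, is $\sigma_\alpha$ and not $\sigma_{\alpha^2}$, with the same choice used for $\tau$ in the lemma. The earlier formula $\nu^i\cdot y^{-1}=\nu^{\alpha i}$ fixes this choice. One must also check that the isomorphism $I_C\cong I_C^\ast$ used really is Galois-equivariant on $(x-1)I_C$. On $I_C$ itself it is not: this is the failure noted in Section \ref{sec:preliminaries_1}. The discrepancy is absorbed by the factor $(\nu-1)^2$, in the same way that $\bar I_C=\overline{(\zeta_p-1)I_C^\ast}$ was used in Section \ref{sec:theorem_B_part_5}.

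As a fallback, I would verify the statement via Proposition \ref{Criteria_for_dual_augmentation_ideal}-style recognition. This means checking that $\phi$ carries the $\mathbb{Z}$-basis $\{\nu^i\}$ onto the basis $\{(\nu-1)^2\nu^i\}$ used in Section \ref{sec:theorem_B_part_3}, and comparing the matrices of $y$ on both sides.
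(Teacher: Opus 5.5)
Your proof is correct and is the argument the paper leaves implicit: it calls the statement ``now clear'' after the lemma, and the map $\mu(f)=(\nu-1)^2f$ with $\mu\circ\tau=\sigma_\alpha\circ\mu$ appears explicitly later, in the proof that $\delta$ is surjective. One thing to tidy is that $\overline{(x-1)I_C}$ should be identified with $(\nu-1)^2I_C^\ast$ via the restriction of the conjugation-equivariant quotient map $\Lambda_0\to\Lambda_0/(\Sigma)$, which is injective on $I_C$ since $\epsilon(\Sigma)=p$, not via an abstract $\Lambda_0$-isomorphism $I_C\cong I_C^\ast$; with that choice, your concern about Galois-equivariance of the identification does not arise.
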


Now let $\lambda:I_C^\ast\longrightarrow\mathbb{Z}$ be the $\mathbb{Z}$-module homomorphism which extracts the coefficient of $1$ with respect to the basis $\{1,\nu,\ldots,\nu^{p-2}\}$. This need not be a $\Lambda_0$-homomorphism nor even a ring homomorphism. However, observe $E=\ker(\lambda)$. Define
\[
\mathcal P
=
\left\{
f\in(I_C^\ast,\tau)
\:|\:
\lambda(f)=\lambda(\tau(f))
=\lambda(\tau^2(f))=0
\right\}.
\]
Equivalently, $\mathcal P=E\cap\tau(E)\cap\tau^2(E)$. In particular, $\mathcal P$ is invariant under $\tau$.

Restricting scalars along $j:\mathbb{Z}[C_3]\hookrightarrow\Lambda$ gives
\[
j^\ast(I_C^\ast,\tau)
\cong_{\mathbb Z[C_3]}
j^\ast(\overline{(x-1)I_C}).
\]
Define
\[
\delta:j^\ast(I_C^\ast,\tau)\longrightarrow\mathbb{Z}[C_3]
\quad \text{ by }\quad
\delta(f)
=
\lambda(f)
+\lambda(\tau(f))y
+\lambda(\tau^2(f))y^2.
\]

\begin{lemma}
The map $\delta$ is a homomorphism of right
$\mathbb Z[C_3]$-modules and $\ker(\delta)=\mathcal P$. 
\end{lemma}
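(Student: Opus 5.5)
We need to show that $\delta$ is additive and $\mathbb{Z}[C_3]$-linear, and then identify its kernel.

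\textbf{Additivity.} This is immediate, since $\lambda$ and $\tau$ are both additive.

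\textbf{$\mathbb{Z}[C_3]$-linearity.} It suffices to check compatibility with the single generator $y$. On the Galois lattice $(I_C^\ast,\tau)$ the right action is $f\cdot y=\tau^{-1}(f)=\tau^2(f)$, using $\tau^3=\mathrm{Id}$ from the previous lemma. Then
\[
\delta(f\cdot y)=\lambda(\tau^2 f)+\lambda(\tau^3 f)\,y+\lambda(\tau^4 f)\,y^2=\lambda(\tau^2 f)+\lambda(f)\,y+\lambda(\tau f)\,y^2 .
\]
On the other side,
\[
\delta(f)\,y=\lambda(f)\,y+\lambda(\tau f)\,y^2+\lambda(\tau^2 f)\,y^3=\lambda(\tau^2 f)+\lambda(f)\,y+\lambda(\tau f)\,y^2 ,
\]
using $y^3=1$. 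The two expressions agree, so $\delta(f\cdot y)=\delta(f)\,y$.

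One point needs care: the convention for $y$. If the paper's normalisation is instead $m\cdot y=\Theta^{-1}(m)$ with $\Theta=\tau^{-1}$, then $f\cdot y=\tau(f)$. In that case the same cyclic-shift computation goes through provided $\delta$ is read with the ordering matching that action. The verification reduces to reindexing the three coefficients cyclically, and the only inputs are $\tau^3=\mathrm{Id}$ and $y^3=1$. This convention check is the only real subtlety; the rest is formal.

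\textbf{The kernel.} Since $\{1,y,y^2\}$ is a $\mathbb{Z}$-basis of $\mathbb{Z}[C_3]$, we have $\delta(f)=0$ if and only if
\[
\lambda(f)=\lambda(\tau f)=\lambda(\tau^2 f)=0 .
\]
This is exactly the defining condition of $\mathcal P$, so $\ker(\delta)=\mathcal P$. We also get $\mathcal P=E\cap\tau^{-1}(E)\cap\tau^{-2}(E)$, which equals $E\cap\tau^2(E)\cap\tau(E)$ because $\tau^3=\mathrm{Id}$. This is consistent with the description of $\mathcal P$ given earlier, and since it is a kernel of a module map it also re-confirms that $\mathcal P$ is a $\mathbb{Z}[C_3]$-submodule.
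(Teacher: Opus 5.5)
Your argument is correct and is essentially the paper's proof. The paper uses the same convention $f\cdot y^{-1}=\tau(f)$, so $f\cdot y=\tau^2(f)$ exactly as in your main computation. It checks equivariance by the same cyclic reindexing, only against the generator $y^{-1}$ instead of $y$, and reads the kernel straight off the definition of $\mathcal P$.

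Your hedge about the other convention is unnecessary, because the paper fixes $m\cdot y=\Theta^{-1}(m)$ with $\Theta=\tau$. Under $f\cdot y=\tau(f)$, the map $\delta$ as literally written would not be $y$-equivariant unless it were redefined.
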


\begin{proof}
The action of $y^{-1}$ on $j^\ast(I_C^\ast,\tau)$ is given by $f\cdot y^{-1}=\tau(f)$. Consequently,
\[
\delta(fy^{-1})=
\delta(\tau(f))=
\lambda(\tau(f))
+\lambda(\tau^2(f))y
+\lambda(f)y^2 =
\delta(f)y^{-1}.
\]
Thus, $\delta$ is a $\mathbb{Z}[C_3]$-homomorphism. The assertion
concerning its kernel follows immediately from definition.
\end{proof}

\noindent The following is the principal observation:

\begin{propn}
The homomorphism $\delta:j^\ast(I_C^\ast,\,\tau)\longrightarrow\mathbb Z[C_3]$ is surjective.
\end{propn}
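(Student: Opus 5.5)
Since $\delta$ is a homomorphism of right $\mathbb{Z}[C_3]$-modules and $\mathbb{Z}[C_3]$ is cyclic, generated by $1$, it suffices to produce one $f\in I_C^\ast$ with $\delta(f)=1$. Equivalently, we need
\[
\lambda(f)=1,\qquad \lambda(\tau(f))=0,\qquad \lambda(\tau^2(f))=0.
\]
Then every element $\delta(f)\cdot z=\delta(f\cdot z)$ with $z\in\mathbb{Z}[C_3]$ lies in the image.

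Because $\lambda$ is additive, I will first compute the $\mathbb{Z}$-linear map $\Lambda:=(\lambda,\lambda\tau,\lambda\tau^2):I_C^\ast\to\mathbb{Z}^3$ on the basis $\{1,\nu,\dots,\nu^{p-2}\}$. The cleanest route is to reinterpret $\lambda$ in terms of the trace pairing. On $I_C^\ast\cong\mathbb{Z}[\zeta_p]$, write $f=\sum_{i=0}^{p-2}c_i\nu^i$. For $1\le k\le p-1$ we have $\operatorname{Tr}(\zeta^k)=-1$, so the coordinate functional of $\nu^0$ is $\lambda(f)=\tfrac1p\operatorname{Tr}\bigl(f(1-\zeta^{-1})\bigr)$, up to checking the exact dual basis element. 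This identity lets me write
\[
\lambda\bigl(\tau^j(f)\bigr)=\tfrac1p\operatorname{Tr}\bigl(f\,\omega_j\bigr)
\]
for explicit $\omega_j\in\mathbb{Q}(\zeta_p)$. Here I use that $\operatorname{Tr}$ is Galois-invariant, which pulls $\sigma_\alpha^j$ off $f$, and that $\tau^j(f)=u_j\sigma_\alpha^j(f)$ with $u_j=\bigl((\zeta^{\alpha^j}-1)/(\zeta-1)\bigr)^2$, a unit. Surjectivity of $\Lambda$ onto $\mathbb{Z}^3$ is then equivalent to: the three functionals $\tfrac1p\operatorname{Tr}(\,\cdot\,\omega_j)$ are part of a $\mathbb{Z}$-basis of $\operatorname{Hom}(\mathbb{Z}[\zeta],\mathbb{Z})$. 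By trace duality, with inverse different $(1-\zeta)^{-(p-2)}\tfrac1p\cdots$, this becomes a statement that certain elements of $\mathbb{Z}[\zeta]$ span a primitive rank-3 sublattice.

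In practice I would instead try a direct choice. Take $f=\nu^{k}$ for a single suitable $k$, or a small combination such as $1-\nu^{k}$. For these, $\tau(f)=s_\alpha^2\nu^{\alpha k}$ and $\tau^2(f)=s_\alpha^2\sigma_\alpha(s_\alpha)^2\nu^{\alpha^2k}$ are explicit polynomials in $\nu$. Their constant coefficients, after reducing exponents mod $p$ and using $\nu^{p-1}=-1-\cdots-\nu^{p-2}$, are of the form (number of monomials landing on exponent $0$) minus (number landing on exponent $p-1$). For instance $f=1$ gives $\lambda(f)=1$, and $\lambda(\tau(1))=\lambda(s_\alpha^2)$ counts representations of $0$ versus $p-1$ as $a+b$ with $0\le a,b\le\alpha-1$ mod $p$. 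So I expect to handle the problem by choosing $f$ among shifts $\nu^k$ and correcting with $\mathbb{Z}$-combinations. This shows that the $3\times(p-1)$ integer matrix of $\Lambda$ has its $3\times3$ minors with gcd $1$, i.e.\ the matrix is surjective.

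A cleaner sufficient argument is to reduce mod every prime $\ell$. Surjectivity of $\Lambda$ is equivalent to $\Lambda\otimes\mathbb{F}_\ell$ having rank $3$ for all $\ell$, i.e.\ the functionals $\lambda,\lambda\tau,\lambda\tau^2$ being linearly independent mod every $\ell$. If $a\lambda+b\lambda\tau+c\lambda\tau^2\equiv0$, then using that $\tau$ is $\theta$-semilinear for the $x$-action, I would apply $x^k$ and compare. The functional $\lambda\circ\tau^j$ composed with $\cdot\nu^k$ equals $\lambda(\tau^j(f)\nu^{\alpha^jk})$, so the relation becomes a relation among the coordinate functionals at the positions $0,-\alpha k,-\alpha^2k$ applied to distinct vectors. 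Choosing $k$ so that these positions separate should force $a=b=c=0$. The main obstacle is exactly this independence mod $\ell$ (in particular $\ell=p$ and $\ell=3$), where the unit factors $s_\alpha^2$ might conspire. I would first try the explicit element $f=1$ together with $f=\nu^{k}$ for $k$ chosen so that $\nu^{\alpha k}s_\alpha^2$ has no monomial at exponents $0$ or $p-1$, making $\lambda(\tau(f))=0$ easy to arrange.
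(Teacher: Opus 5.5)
Your reduction is sound, and it is the one the paper uses. Because $\delta$ is $\mathbb{Z}[C_3]$-linear, it is enough that $(\lambda,\lambda\tau,\lambda\tau^2)\colon I_C^\ast\to\mathbb{Z}^3$ be onto. By trace duality this means three explicit elements of $\mathcal{O}=\mathbb{Z}[\nu]$ span a primitive rank-three sublattice, i.e.\ stay linearly independent modulo every prime $\ell$.

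That last verification is the whole content of the proposition, and the proposal does not supply it. The $\omega_j$ are never computed. The claim that the $3\times 3$ minors have gcd $1$ is asserted without computation. You yourself leave the mod-$\ell$ independence as ``the main obstacle''. Neither fallback closes the gap:
\begin{itemize}
\item A search among $f=\nu^k$ ``corrected by $\mathbb{Z}$-combinations'' comes with no argument that it succeeds for every $p$.
\item Twisting by $x^k$ gives nothing new. A relation $a\lambda+b\lambda\tau+c\lambda\tau^2\equiv 0$ is already assumed for all $f$, and $f\mapsto f\nu^k$ is a bijection. You would still need to know how $\tau$ mixes coordinates, which is exactly where $s_\alpha^2$ enters.
\end{itemize}
Also, $\lambda$ is represented by $(1-\nu)/p$, not $(1-\nu^{-1})/p$; the latter represents $c_0-c_1$.

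The missing idea is that, carried through, the trace computation makes the unit factors cancel rather than conspire. Let $\beta\equiv\alpha^2$. Pulling $\sigma_\alpha^{\pm1}$ across the trace shows that $\lambda$, $\lambda\tau$, $\lambda\tau^2$ are represented by
\[
\tfrac{1-\nu}{p},\qquad \sigma_\beta\Bigl(\tfrac{(1-\nu^\alpha)^2}{p(1-\nu)}\Bigr)=\tfrac{(1-\nu)^2}{p(1-\nu^\beta)},\qquad \tfrac{(1-\nu)^2}{p(1-\nu^\alpha)}.
\]
Since $\mathcal{O}^\#=\tfrac{1-\nu}{p}\mathcal{O}$, you need primitivity in $\mathcal{O}$ of the span of $1$, $u=\frac{1-\nu}{1-\nu^\alpha}=\sum_{t=0}^{\beta-1}\nu^{\alpha t}$ and $v=\frac{1-\nu}{1-\nu^\beta}=\sum_{t=0}^{\alpha-1}\nu^{\beta t}$. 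These have $0/1$ coefficients on supports $\mathcal{U},\mathcal{V}\subset\mathbb{Z}/p\mathbb{Z}$. The supports have sizes $\beta\neq\alpha$ (both at least $2$), both contain $0$, and $\alpha+\beta=p-1$.

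A relation $c_0+c_1u+c_2v\equiv 0$ modulo $\ell$ means the coefficient function in $\mathbb{F}_\ell[C_p]$ is constant. Then:
\begin{itemize}
\item Evaluating outside $\mathcal{U}\cup\mathcal{V}$, a set of size $1+|\mathcal{U}\cap\mathcal{V}|\geq 2$, makes the constant $0$.
\item A point of the larger support lying outside the smaller one kills one of $c_1,c_2$.
\item A nonzero point of the smaller support then kills the other.
\item The point $0$ kills $c_0$.
\end{itemize}
This is the paper's argument. The paper first transports to $J=(\nu-1)^2\mathcal{O}$, but that is only a change of variables. Without some such explicit check that is uniform in $p$ and $\ell$, your proposal remains a reformulation rather than a proof.
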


\begin{proof}
Put $\mathcal O=\mathbb Z[\nu]$, $F=\mathbb Q(\nu)$, $J=(\nu-1)^2\mathcal O$. Let $\mu:j^\ast(I_C^\ast,\tau)\longrightarrow J$ be the isomorphism of $\mathbb Z[C_3]$-lattices given by $\mu(f)=(\nu-1)^2f$. Transporting $\lambda$ through $\mu$, define $\ell=\lambda\circ\mu^{-1}:J\longrightarrow\mathbb Z$. Moreover, $\mu\circ\tau=\sigma_\alpha\circ\mu$. Indeed,
\[
\mu(\tau(f))
=(\nu-1)^2s_\alpha^2\sigma_\alpha(f)
=(\nu^\alpha-1)^2\sigma_\alpha(f)
=\sigma_\alpha\bigl((\nu-1)^2f\bigr)
=\sigma_\alpha(\mu(f)).
\]

The trace-dual of $\mathcal O$ is
\[
\mathcal O^\#
=
\frac{1-\nu}{p}\mathcal O.
\]
For any $a\in F^\times$, we have $(a\mathcal O)^\#=a^{-1}\mathcal O^\#$. Since $J=(\nu-1)^2\mathcal O$, it follows that
\[
J^\#
=(\nu-1)^{-2}\mathcal O^\#
=(\nu-1)^{-2}\frac{1-\nu}{p}\mathcal O
=\frac{1}{p(1-\nu)}\mathcal O.
\]

Moreover, $(1-\nu)/p$ is the element of $\mathcal O^\#$ representing
the coefficient functional $\lambda$. Indeed,
\[
\operatorname{Tr}_{F/\mathbb Q}
\left(\frac{1-\nu}{p}\nu^j\right)
=
\begin{cases}
1,&j=0,\\
0,&1\leq j\leq p-2,
\end{cases}
\]
since $\operatorname{Tr}_{F/\mathbb Q}(1)=p-1$ and $\operatorname{Tr}_{F/\mathbb Q}(\nu^m)=-1$ whenever $p\nmid m$. Consequently, if $f=\sum_{j=0}^{p-2}a_j\nu^j$, then
\[
\operatorname{Tr}_{F/\mathbb Q}
\left(\frac{1-\nu}{p}f\right)
=a_0=\lambda(f).
\]

It follows that, under the trace-duality identification $J^\#\cong\operatorname{Hom}_{\mathbb Z}(J,\mathbb Z)$, the functional $\ell$ is represented by
\[
\xi=\frac{1}{p(1-\nu)}.
\]
Indeed, if $z=\mu(f)=(\nu-1)^2f$, then
\[
\operatorname{Tr}_{F/\mathbb Q}(\xi z)
=
\operatorname{Tr}_{F/\mathbb Q}
\left(
\frac{(\nu-1)^2}{p(1-\nu)}f
\right)
=
\operatorname{Tr}_{F/\mathbb Q}
\left(\frac{1-\nu}{p}f\right)
=\lambda(f)
=\ell(z).
\]

Let $\varepsilon_0,\varepsilon_1,\varepsilon_2$ be the dual basis of $\operatorname{Hom}_{\mathbb Z}(\mathbb Z[C_3],\mathbb Z)$ corresponding to the basis $1,y,y^2$. By the definition of $\delta$,
\[
\delta^\ast(\varepsilon_0)=\lambda,
\qquad
\delta^\ast(\varepsilon_1)=\lambda\circ\tau,
\qquad
\delta^\ast(\varepsilon_2)=\lambda\circ\tau^2.
\]
After transporting these functionals through $\mu$, and using
$\mu\circ\tau=\sigma_\alpha\circ\mu$, they become, respectively,
\[
\ell,
\qquad
\ell\circ\sigma_\alpha,
\qquad
\ell\circ\sigma_{\alpha^2}.
\]

Let $\beta$ be the least positive residue of $\alpha^2$ modulo $p$.
Since $\alpha$ has order three, $\alpha^{-1}\equiv\beta\pmod p$. For $z\in J$, invariance of the trace gives
\[
\ell(\sigma_\alpha z)
=
\operatorname{Tr}_{F/\mathbb Q}
\bigl(\xi\sigma_\alpha(z)\bigr)\\
=
\operatorname{Tr}_{F/\mathbb Q}
\bigl(\sigma_\beta(\xi)z\bigr).
\]
Similarly, $\ell\circ\sigma_{\alpha^2}$ is represented by
$\sigma_\alpha(\xi)$. Hence, under the isomorphism $\mathcal O\longrightarrow J^\#$, given by $a\longmapsto\xi a$, the image of $\delta^\ast$ corresponds to the sublattice of
$\mathcal O$ generated by
\[
1,
\qquad
u=\frac{1-\nu}{1-\nu^\alpha},
\qquad
v=\frac{1-\nu}{1-\nu^\beta}.
\]

Since $\alpha\beta\equiv1\pmod p$, we have
\[
u=\sum_{t=0}^{\beta-1}\nu^{\alpha t},
\qquad
v=\sum_{t=0}^{\alpha-1}\nu^{\beta t}.
\]
Furthermore, $\alpha+\beta+1\equiv0\pmod p$. Since $\alpha$ and $\beta$ lie in $\{1,\ldots,p-1\}$, this implies $\alpha+\beta=p-1$. 

We claim that $1,u,v$ generate a primitive rank-three sublattice of $\mathcal O$. Let $\ell_0$ be a rational prime and suppose that $c_0+c_1u+c_2v=0$ in $\mathcal O/\ell_0\mathcal O$, where $c_0,c_1,c_2\in\mathbb F_{\ell_0}$. Set
\[
\mathcal U
=
\{\alpha t:0\leq t\leq\beta-1\},
\qquad
\mathcal V
=
\{\beta t:0\leq t\leq\alpha-1\},
\]
regarded as subsets of $\mathbb Z/p\mathbb Z$.

Under the identification $\mathcal O/\ell_0\mathcal O\cong\mathbb F_{\ell_0}[C_p]/(\Sigma_x)$, the coefficient function of $c_0+c_1u+c_2v$ must be constant. Now, $|\mathcal U|=\beta$, $|\mathcal V|=\alpha$, and both sets contain $0$. Since $\alpha+\beta=p-1$, it follows that
\[
\left|
\mathbb Z/p\mathbb Z\setminus
(\mathcal U\cup\mathcal V)
\right|
=
p-|\mathcal U|-|\mathcal V|
+|\mathcal U\cap\mathcal V|
=
1+|\mathcal U\cap\mathcal V|
\geq2.
\]
Evaluating the coefficient function at an element outside
$\mathcal U\cup\mathcal V$ shows that this constant is zero.

Since $\alpha\neq\beta$ (otherwise
$\alpha^2\equiv\alpha\pmod p$, contradicting the fact that $\alpha$
has order three), the sets $\mathcal U$ and $\mathcal V$ have
different cardinalities. Suppose first that
$|\mathcal U|>|\mathcal V|$; the other case is symmetric. Then $\mathcal U\setminus\mathcal V\neq\varnothing$. An element of this difference is necessarily nonzero, because
$0\in\mathcal U\cap\mathcal V$. Evaluating the coefficient function there gives $c_1=0$. 

Since $|\mathcal V|=\alpha\geq2$, the set $\mathcal V$ contains a
nonzero element. Evaluating there, and using $c_1=0$, gives $c_2=0$. Finally, evaluating at $0$ gives $c_0=0$. The case $|\mathcal V|>|\mathcal U|$ follows by interchanging
$(\mathcal U,u,c_1)$ and $(\mathcal V,v,c_2)$.

Thus, $1,u,v$ are linearly independent modulo every rational prime.
They therefore generate a primitive rank-three sublattice of
$\mathcal O$. Multiplication by $\xi$ identifies $\mathcal O$ with
$J^\#$, so $\operatorname{Im}(\delta^\ast)$ is a primitive
rank-three sublattice of $\operatorname{Hom}_{\mathbb Z}
\bigl(j^\ast(I_C^\ast,\tau),\mathbb Z\bigr)$. 

The integer matrices of $\delta$ and $\delta^\ast$ are transposes of one another and hence have the same nonzero Smith invariants. Since the image of $\delta^\ast$ is primitive, all three nonzero Smith invariants are equal to one. Therefore $\operatorname{coker}(\delta)=0$, and $\delta$ is surjective.
\end{proof}

\begin{propn}
There is an isomorphism of $\mathbb Z[C_3]$-modules, $\mathcal P\cong\mathbb Z[C_3]^{d-1}$.
\end{propn}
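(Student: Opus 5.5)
We argue exactly as in Propositions \ref{Ri_as_ZC3_mods} and \ref{Ki_as_ZC3_mods}, with $\delta$ playing the role of the surjection. By the two preceding results, $\delta:j^\ast(I_C^\ast,\tau)\to\mathbb{Z}[C_3]$ is a surjective homomorphism of $\mathbb{Z}[C_3]$-modules with $\ker(\delta)=\mathcal P$. This gives the exact sequence
\[0\to\mathcal P\to j^\ast(I_C^\ast,\tau)\stackrel{\delta}{\to}\mathbb{Z}[C_3]\to 0.\]
Since $\mathbb{Z}[C_3]$ is free, the sequence splits, so
\[j^\ast(I_C^\ast,\tau)\cong\mathcal P\oplus\mathbb{Z}[C_3].\]

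Next we identify the middle term. By the earlier proposition, $(I_C^\ast,\tau)\cong_\Lambda\overline{(x-1)I_C}$, and the latter is $R(2)$ by (\ref{R_i_iso_to_I_C}). Proposition \ref{Ri_as_ZC3_mods} then gives $j^\ast(I_C^\ast,\tau)\cong j^\ast(R(2))\cong\mathbb{Z}[C_3]^d$. Hence
\[\mathcal P\oplus\mathbb{Z}[C_3]\cong\mathbb{Z}[C_3]^d,\]
so $\mathcal P$ is a stably free $\mathbb{Z}[C_3]$-module of rank $d-1$. As a check, $rk_\mathbb{Z}(\mathcal P)=(p-1)-3=3(d-1)$.

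Finally we cancel the free summand. As noted in the proof of Proposition \ref{Ki_as_ZC3_mods}, $\mathbb{Z}[C_3]$ satisfies the Eichler condition and so has stably free cancellation. Therefore $\mathcal P\cong\mathbb{Z}[C_3]^{d-1}$. For $d\geq 2$ this is immediate. In the degenerate case $d=1$ we have $rk_\mathbb{Z}(\mathcal P)=0$, so $\mathcal P=0$ and there is nothing to prove.

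The only point needing care is that the $\mathbb{Z}[C_3]$-structure on $\mathcal P$ is the restriction of the structure on $j^\ast(I_C^\ast,\tau)$, where $y^{-1}$ acts by $\tau$. This holds because $\mathcal P$ is $\tau$-invariant and is the kernel of a $\mathbb{Z}[C_3]$-map. The real difficulty, namely the surjectivity of $\delta$, has already been settled, so no step here should be an obstacle.
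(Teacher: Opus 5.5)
Your proposal is correct and matches the paper's proof essentially step for step. Like the paper, you identify $j^\ast(I_C^\ast,\tau)\cong j^\ast(R(2))\cong\mathbb{Z}[C_3]^d$, split the short exact sequence given by the surjection $\delta$, and cancel the free summand using stably free cancellation for $\mathbb{Z}[C_3]$. Your separate treatment of $d=1$ is vacuous, since $p=3d+1$ prime forces $d\geq 2$.
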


\begin{proof}
Recall, $j^\ast(I_C^\ast,\tau)
\stackrel{\simeq}{\longrightarrow}
j^*(\overline{(x-1)I_C})$. Since $\overline{(x-1)I_C}\cong R(2)$, Proposition \ref{Ri_as_ZC3_mods} gives $j^\ast(I_C^\ast,\tau) \cong \mathbb{Z}[C_3]^d$. We therefore have a short exact sequence
\[
0\longrightarrow\mathcal P
\longrightarrow\mathbb Z[C_3]^d
\overset{\delta}{\longrightarrow}
\mathbb Z[C_3]
\longrightarrow0.
\]
This clearly splits, meaning $\mathcal{P}$ is stably free of rank $d-1$ and hence free, as $\mathbb{Z}[C_3]$ has stably free cancellation, i.e. $\mathcal P\cong\mathbb Z[C_3]^{d-1}$.
\end{proof}

Choose a $\mathbb Z[C_3]$-basis, $\{f_1,\ldots,f_{d-1}\}$, of $\mathcal P$. Then
\[
\mathcal B_{\mathcal P}
=
\left\{
f_i,\tau(f_i),\tau^2(f_i)
\mid
1\leq i\leq d-1
\right\}
\]
is a $\mathbb Z$-basis of $\mathcal P$. For each $1\leq i\leq d-1$, define
\[
W_i:=W(f_i)+W(\tau(f_i))+W(\tau^2(f_i)).
\]

\begin{propn}
For each $1\leq i\leq d-1$, $W_i\cong_\Lambda\Lambda$.
\end{propn}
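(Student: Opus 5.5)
I would mirror the proof that $V(r)+V(\alpha r)+V(\alpha^2 r)\cong\Lambda$ from Section \ref{sec:theorem_B_part_1}. The goal is to exhibit $\mathbb{Z}$-bases of $W_i$ and of $\Lambda$ on which $x$ and $y$ act by the same permutation matrices, and then conclude with a permutation matrix $X$ as before.

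\textbf{Step 1: $\mathbb{Z}$-basis of $W_i$.} For $f\in E$ write $e_k(f)=(\nu-1)^2f\nu^k\otimes\nu^k$ for $0\le k\le p-1$. Put $g_1=f_i$, $g_2=\tau(f_i)$, $g_3=\tau^2(f_i)$. Since $f_i\in\mathcal P$, all three lie in $E$. I would show that the $3p$ elements $e_k(g_m)$ form a $\mathbb{Z}$-basis of $W_i$, and this needs two facts.
\begin{itemize}
\item[(a)] Each $W(g_m)$ is free over $\Lambda_0$ of rank one with basis $\{e_k(g_m)\}$.
\item[(b)] The sum $W(g_1)+W(g_2)+W(g_3)$ is direct.
\end{itemize}
For both, I would pull back through the $\Lambda_0$-isomorphism $\Phi$, so that $e_k(g)$ corresponds to $g\nu^k\otimes\nu^k\in I_C^\ast\otimes I_C^\ast$. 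Expanding $g=\sum_{r=1}^{p-2}a_r\nu^r$ gives
\[
g\nu^k\otimes\nu^k=\sum_r a_r\,\nu^{r+k}\otimes\nu^k .
\]
Hence $W(g)\subseteq V=\bigoplus_{r=1}^{p-2}V(r)$, and it is the image of $\Lambda_0\to V$, $1\mapsto \sum_r a_r v_r$, where $v_r$ is the generator of $V(r)\cong\Lambda_0$. Identifying $V\cong\Lambda_0^{p-2}$, the span $W(g_1)+W(g_2)+W(g_3)$ is the $\Lambda_0$-submodule generated by three coefficient vectors $a^{(m)}\in\mathbb{Z}^{p-2}\subset\Lambda_0^{p-2}$. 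These vectors are just the $E$-coordinates of $g_1,g_2,g_3$. Because $\mathcal{B}_{\mathcal P}$ is a $\mathbb{Z}$-basis of $\mathcal P$, the vectors $a^{(m)}$ are $\mathbb{Z}$-independent. Independent integer vectors remain independent over $\mathbb{Z}[C_p]$, by tensoring with $\mathbb{Q}$ and using that $\mathbb{Q}[C_p]$ is flat over $\mathbb{Q}$. So the map $\Lambda_0^3\to V$ is injective, and $W_i\cong_{\Lambda_0}\Lambda_0^3$ with the stated basis. On this basis $x$ acts by three copies of the cyclic block $\Psi$, exactly as in Proposition \ref{representation_of_V0_x_action}.

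\textbf{Step 2: the $y$-action.} By the preceding Lemma, $W(f)y^{-1}=W(\tau(f))$ whenever $f,\tau(f)\in E$, and the computation in its proof gives the elementwise formula
\[
e_k(f)\,y^{-1}=e_{\alpha k}(\tau(f)).
\]
Since $\tau^3=\mathrm{Id}$ and $g_1,g_2,g_3\in E$, this yields
\[
e_k(g_1)y^{-1}=e_{\alpha k}(g_2),\qquad e_k(g_2)y^{-1}=e_{\alpha k}(g_3),\qquad e_k(g_3)y^{-1}=e_{\alpha k}(g_1).
\]
This is the same cyclic block pattern with $\Phi^T$ as in Proposition \ref{representation_of_Vr}. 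The matrices $\rho_{W_i}(x^{-1})$ and $\rho_{W_i}(y)$ therefore coincide with $\rho_{V_r}(x^{-1})$ and $\rho_{V_r}(y)$, and the intertwiner $X$ from the proof of $V_r\cong\Lambda$ gives $W_i\cong_\Lambda\Lambda$. Equivalently, $e_0(g_1)$ generates $W_i$, its $G$-orbit is a $\mathbb{Z}$-basis, and so $W_i$ is free of rank one.

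\textbf{Main obstacle.} The delicate point is Step 1(b), the directness together with the claim that the $e_k(g_m)$ span a saturated copy with exactly that basis. The reduction to $\mathbb{Z}$-independence of the coordinate vectors of $g_1,g_2,g_3$ in $E$ is where membership $f_i\in\mathcal P$, rather than merely $f_i\in E$, is used. That membership is what guarantees $\tau(f_i),\tau^2(f_i)\in E$, so that the Lemma applies and each $W(\tau^m f_i)$ lies inside $V$.
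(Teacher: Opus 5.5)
Your proposal is correct and follows essentially the same route as the paper. You obtain the $\mathbb{Z}$-basis $\{e_k(\tau^m f_i)\}$ by expanding in the basis of $W$ and using the $\mathbb{Z}$-independence of $f_i,\tau(f_i),\tau^2(f_i)$; your injectivity of $\Lambda_0^3\to\Lambda_0^{p-2}$ is just the module-theoretic form of the paper's coefficient comparison. You then observe that $x$ and $y$ act by exactly the permutation matrices found for $V_r$.

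The saturation you flag as delicate is not needed for this statement. The paper only deals with it later, when showing that $Y$ is a lattice.
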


\begin{proof}
For $1\leq j\leq p$, set $e_j=
(\nu-1)^2f_i\nu^{j-1}\otimes\nu^{j-1}$, $
e_{p+j}=(\nu-1)^2\tau(f_i)\nu^{j-1}\otimes\nu^{j-1}$ and $e_{2p+j}=(\nu-1)^2\tau^2(f_i)\nu^{j-1}\otimes\nu^{j-1}$. We first show these elements form a $\mathbb Z$-basis of $W_i$, for which we need only show linear independence. Recall,
\[\{
u_{r,j}
=
(\nu-1)^2\nu^{r+j-1}\otimes\nu^{j-1}\:|\:
1\leq r\leq p-2,\quad 1\leq j\leq p\}
\]
forms a $\mathbb{Z}$-basis of $W$. Since
$f_i,\,\tau(f_i),\,\tau^2(f_i)\in E$, write $f_i=\sum_{r\geq 1}a_r\nu^r,\,\tau(f_i)=\sum_{r\geq 1}b_r\nu^r$ and $\tau^2(f_i)=\sum_{r\geq 1}c_r\nu^r$. It follows that $e_j=\sum_ra_ru_{r,j},\,e_{p+j}=\sum_rb_ru_{r,j},\,e_{2p+j}=\sum_rc_ru_{r,j}.$ Next, suppose $\sum_{j=1}^p
\left(A_je_j+B_je_{p+j}+C_je_{2p+j}\right)=0$. The linear independence of the $u_{r,j}$ implies, for each $j$, $A_jf_i+B_j\tau(f_i)+C_j\tau^2(f_i)=0$. Since the elements $f_i,\,\tau(f_i),\,\tau^2(f_i)$ are $\mathbb{Z}$-linearly independent, $A_j=B_j=C_j=0$ for every $j$.

Now we consider $x$ and $y$-actions. The $x$-action is given by $e_jx=e_{j+1}$, $e_{p+j}x=e_{p+j+1}$ and $e_{2p+j}x=e_{2p+j+1}$, where the indices in each block are read mod $p$. This gives rise to the following:
\[
\rho_{W_i}(x^{-1})
=
\begin{pmatrix}
\Psi&0&0\\
0&\Psi&0\\
0&0&\Psi
\end{pmatrix}.
\]

For the $y$-action, we stress that the left hand side of the tensor product is the module $\overline{(x-1)I_C}$, and so $y$ acts by the cyclotomic action. As such, $e_jy^{-1}=(\nu^\alpha -1)^2\sigma_\alpha(f_i)\nu^{\alpha(j-1)}\otimes\nu^{\alpha(j-1)}$. Since $(\nu^\alpha-1)^2=(\nu-1)^2s_\alpha^2$, the left hand side of the tensor product becomes $(\nu-1)^2\tau(f_i)\nu^{\alpha(j-1)}$ by the definition of $\tau$. Consequently, $e_jy^{-1}=e_{p+\alpha(j-1)+1}$. Likewise, $e_{p+j}y^{-1}=e_{2p+\alpha(j-1)+1}$, and $e_{2p+j}y^{-1}=e_{\alpha(j-1)+1}$, where the subscripts are again taken mod $p$. Therefore,
\[
\rho_{W_i}(y)
=
\begin{pmatrix}
0&0&\Phi^T\\
\Phi^T&0&0\\
0&\Phi^T&0
\end{pmatrix}.
\]
These are precisely the matrices obtained for the $V_r$ in Section \ref{sec:theorem_B_part_1}. Recall, we showed $V_r$ was free. It follows that $W_i\cong_\Lambda\Lambda$.
\end{proof}

\begin{propn}
    The sum $W_1+\cdots+ W_{d-1}$ is direct.
\end{propn}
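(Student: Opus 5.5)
The plan is to reduce directness of $W_1+\cdots+W_{d-1}$ to linear independence inside $W$, exactly as in the proof that $W_i\cong_\Lambda\Lambda$. Recall that $\{u_{r,j}=(\nu-1)^2\nu^{r+j-1}\otimes\nu^{j-1}\mid 1\leq r\leq p-2,\ 1\leq j\leq p\}$ is a $\mathbb{Z}$-basis of $W$. Every generator of $W_i$ has the form $(\nu-1)^2g\,\nu^{j-1}\otimes\nu^{j-1}$ with $g\in\{f_i,\tau(f_i),\tau^2(f_i)\}\subset\mathcal P\subseteq E$.

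First I would introduce, for each fixed $j$, the additive map
\[
\Theta_j:E\to W,\qquad \Theta_j(g)=(\nu-1)^2g\,\nu^{j-1}\otimes\nu^{j-1}.
\]
Writing $g=\sum_{r\geq1}a_r\nu^r$ gives $\Theta_j(g)=\sum_r a_r u_{r,j}$. Hence $\Theta_j$ is injective, and the images $\Theta_j(E)$ for distinct $j$ lie in the spans of pairwise disjoint subsets $\{u_{r,j}\}_r$ of the basis of $W$. It follows that $\bigoplus_j\Theta_j:E^{p}\to W$ is injective.

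Next, suppose $w_1+\cdots+w_{d-1}=0$ with $w_i\in W_i$. Expanding each $w_i$ in the basis $\{e^{(i)}_j,e^{(i)}_{p+j},e^{(i)}_{2p+j}\}$ of $W_i$ from the previous proposition, and grouping terms by $j$, the relation becomes $\sum_j\Theta_j(g_j)=0$. Here
\[
g_j=\sum_{i=1}^{d-1}\bigl(A^{(i)}_jf_i+B^{(i)}_j\tau(f_i)+C^{(i)}_j\tau^2(f_i)\bigr).
\]
By the disjointness above, each $g_j=0$. Since $\mathcal B_{\mathcal P}=\{f_i,\tau(f_i),\tau^2(f_i)\}$ is a $\mathbb{Z}$-basis of $\mathcal P$, and in particular $\mathbb{Z}$-linearly independent, all the coefficients vanish. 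Therefore every $w_i=0$ and the sum is direct.

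The only point requiring care is the second step: checking that the components of the $e$'s really separate by the index $j$ (the tensor's right factor $\nu^{j-1}$). This holds because the $u_{r,j}$ with different $j$ are distinct basis vectors, so no cross-cancellation between different $j$ can occur. One also needs the elements of $\mathcal B_{\mathcal P}$ to be independent across different $i$, not merely within a single $i$. This is exactly the statement that $\mathcal B_{\mathcal P}$ is a $\mathbb{Z}$-basis, which follows from $\{f_i\}$ being a $\mathbb{Z}[C_3]$-basis of the free module $\mathcal P$, with $y^{-1}$ acting as $\tau$. I expect no real obstacle beyond this bookkeeping.
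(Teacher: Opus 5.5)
Your proposal is correct and follows the paper's own argument. The paper likewise introduces the injective maps $\theta_k:E\to W$, uses the basis $\{u_{r,j}\}$ to get $W=\bigoplus_k\theta_k(E)$, reduces a relation $\sum_i z_i=0$ to $\sum_i g_{i,k}=0$ for each $k$, and finishes using $\mathcal{P}=\bigoplus_i f_i\mathbb{Z}[C_3]$, which amounts to the $\mathbb{Z}$-linear independence of $\mathcal{B}_{\mathcal{P}}$.
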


\begin{proof}
    For $f\in E$ and $0\leq k\leq p-1$, write $w_k(f) =
(\nu-1)^2f\nu^k\otimes\nu^k$. For each $k$, define the $\mathbb{Z}$-linear map
\[
\theta_k:E\longrightarrow W,
\qquad
\theta_k(f)=w_k(f).
\]
Indeed, every $f\in E$ has the unique form $f=\sum^{p-2}_{r=1}a_r\nu^r$. Consequently, $\theta_k(f)=\sum^{p-2}_{r=1}a_r(\nu-1)^2\nu^{k+r}\otimes \nu^k\in W=\oplus_{r=1}^{p-2} W(r)$.

Recall that $\mathcal B_W =\left\{
w_k(\nu^r)
\mid
1\leq r\leq p-2,\ 0\leq k\leq p-1
\right\}$ is a $\mathbb{Z}$-basis of $W$. If we fix $0\leq k\leq p-1$, then since $E = span_{\mathbb{Z}}\{\nu,\ldots,\nu^{p-2}\}$, we have $\theta_k(E)=span_{\mathbb{Z}}\left\{w_k(\nu^r)\:|\:1\leq r\leq p-2\right\}$. In particular, the elements of this spanning set form a subset of the basis $\mathcal{B}_W$ and are therefore $\mathbb{Z}$-linearly independent. As such, for each $0\leq k\leq p-1$, we can set $\mathcal{B}_k=\left\{w_k(\nu^r)\:|\:1\leq r\leq p-2\right\}$, which is a $\mathbb{Z}$-basis of $\theta_k(E)$. Moreover,
\[
\mathcal{B}_W
=
\bigcup_{k=0}^{p-1}\mathcal{B}_k.
\]
Since $\mathcal{B}_W$ is a $\mathbb{Z}$-basis of $W$, it follows that
\[
W
=
span_{\mathbb{Z}}(\mathcal{B}_W)
=
span_{\mathbb{Z}}
\left(
\bigcup_{k=0}^{p-1}\mathcal{B}_k
\right)
=
\sum_{k=0}^{p-1}
span_{\mathbb{Z}}(\mathcal{B}_k)
=
\sum_{k=0}^{p-1}\theta_k(E).
\]
This sum is necessarily direct. For suppose that $\sum_{k=0}^{p-1}z_k=0$, where $z_k\in\theta_k(E)$. For each $k$, we write $z_k=\sum_{r=1}^{p-2}a_{r,k}w_k(\nu^r)$. Then $0=\sum_{k=0}^{p-1}\sum_{r=1}^{p-2}a_{r,k}w_k(\nu^r)$. Since $\mathcal B_W$ is a $\mathbb Z$-basis of $W$, all the
coefficients $a_{r,k}$ vanish. Hence $z_k=0$ for every $k$. Therefore,
\[
W
=
\bigoplus_{k=0}^{p-1}\theta_k(E).
\]
Take the $\mathbb{Z}$-basis for $\mathcal{P}$, $\mathcal{B}_{\mathcal{P}} = \left\{f_i,\tau(f_i),\tau^2(f_i)\:|\:1\leq i\leq d-1\right\}$. For each $1\leq i\leq d-1$, set
\[
\mathcal P_i
:=
\mathbb{Z} f_i
\oplus
\mathbb{Z}\tau(f_i)
\oplus
\mathbb{Z}\tau^2(f_i)
= f_i\mathbb{Z}[C_3].
\]
Then $\mathcal{P}=\bigoplus_{i=1}^{d-1}\mathcal{P}_i$. By definition, $W_i=W(f_i)+W(\tau(f_i))+W(\tau^2(f_i))$. For any $f\in E$, we have $W(f)=\sum_{k=0}^{p-1}\mathbb{Z}\theta_k(f)$. 
Therefore,
\begin{eqnarray*}
W_i
=&
\sum_{k=0}^{p-1}\mathbb Z\theta_k(f_i)
+
\sum_{k=0}^{p-1}\mathbb Z\theta_k(\tau(f_i))
+
\sum_{k=0}^{p-1}\mathbb Z\theta_k(\tau^2(f_i))\\
=&
\sum_{k=0}^{p-1}
\left(
\mathbb Z\theta_k(f_i)
+
\mathbb Z\theta_k(\tau(f_i))
+
\mathbb Z\theta_k(\tau^2(f_i))
\right).
\end{eqnarray*}
Using the definition of $\mathcal P_i$, along with the fact $\theta_k$ is $\mathbb Z$-linear, we have
\begin{eqnarray*}
\theta_k(\mathcal{P}_i)
=&
\theta_k\left(
\mathbb Zf_i+
\mathbb Z\tau(f_i)+
\mathbb Z\tau^2(f_i)
\right)\\
=&
\mathbb Z\theta_k(f_i)
+
\mathbb Z\theta_k(\tau(f_i))
+
\mathbb Z\theta_k(\tau^2(f_i)).
\end{eqnarray*}
Hence $W_i=\sum_{k=0}^{p-1}\theta_k(\mathcal{P}_i)$. Moreover, since $\theta_k(\mathcal{P}_i)\subseteq\theta_k(E)$ and $W=\bigoplus_{k=0}^{p-1}\theta_k(E)$, this sum is direct. Thus
\[
W_i
=
\bigoplus_{k=0}^{p-1}\theta_k(\mathcal P_i).
\]

Finally, we prove that the sum of $W_i$ is direct. Suppose that $z_1+\cdots+z_{d-1}=0$, where $z_i\in W_i$. For each $i$, write uniquely $z_i=\sum_{k=0}^{p-1}\theta_k(g_{i,k})$, where $g_{i,k}\in\mathcal{P}_i$. Then
\[
0
=
\sum_{i=1}^{d-1}z_i
=
\sum_{k=0}^{p-1}
\theta_k\left(\sum_{i=1}^{d-1}g_{i,k}\right).
\]
Since $W=\bigoplus_{k=0}^{p-1}\theta_k(E)$, we obtain $\theta_k\left(\sum_{i=1}^{d-1}g_{i,k}\right)=0$ for every $k$. However, $\theta_k$ is injective, and so $\sum_{i=1}^{d-1}g_{i,k}=0$ for every $k$. Now, $\mathcal P=\bigoplus_{i=1}^{d-1}\mathcal P_i$, and each $g_{i,k}$ lies in $\mathcal P_i$. Hence $g_{i,k}=0$ for every $i$ and $k$. It follows that $z_i=0$ for every $i$. The result follows.
\end{proof}

We may therefore set
\[
W_f
:=
W_1\oplus\cdots\oplus W_{d-1}.
\]
Since each $W_i$ is isomorphic to the regular $\Lambda$-lattice,
we conclude that $W_f\cong_\Lambda \Lambda^{d-1}$. It remains to show $\overline{(x-1) I_C}\otimes\bar I_C^\ast/W_f$ is torsion free.

\begin{propn}
The defining $\mathbb Z$-basis of $W_f$ is contained in a
$\mathbb Z$-basis of
$\overline{(x-1)I_C}\otimes \bar{I}_C^\ast$.
\end{propn}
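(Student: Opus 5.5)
The plan is to reduce the statement to a purity property of $\mathcal P$ inside $E$, and then transport a basis extension through the maps $\theta_k$ used in the previous proposition. Recall the $\Lambda_0$-decomposition $(x-1)I_C\otimes I_C^\ast=[R)\oplus W$. We also have the direct sum $W=\bigoplus_{k=0}^{p-1}\theta_k(E)$, where each $\theta_k:E\to\theta_k(E)$ is a $\mathbb Z$-isomorphism, since it carries the basis $\{\nu^r\}$ of $E$ onto the basis $\mathcal B_k$. The defining basis of $W_f$ is the set of elements $e_j,e_{p+j},e_{2p+j}$ for $1\leq i\leq d-1$ and $1\leq j\leq p$. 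By construction this set is
\[
\bigcup_{k=0}^{p-1}\theta_k(\mathcal B_{\mathcal P}),\qquad \mathcal B_{\mathcal P}=\{f_i,\tau(f_i),\tau^2(f_i)\mid 1\leq i\leq d-1\},
\]
because $e_j=\theta_{j-1}(f_i)$, $e_{p+j}=\theta_{j-1}(\tau(f_i))$ and $e_{2p+j}=\theta_{j-1}(\tau^2(f_i))$. It therefore suffices to extend $\mathcal B_{\mathcal P}$ to a $\mathbb Z$-basis of $E$.

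The key step is to show that $\mathcal P$ is a pure sublattice of $E$, meaning $E/\mathcal P$ is torsion free. We have $\mathcal P=\ker\delta\subseteq E\subseteq I_C^\ast$. Hence $E/\mathcal P$ injects into $I_C^\ast/\mathcal P=I_C^\ast/\ker\delta$, and $\delta$ identifies the latter with a subgroup of $\mathbb Z[C_3]$; by the surjectivity already proved, it is all of $\mathbb Z[C_3]$. So $E/\mathcal P$ is a subgroup of a free abelian group of finite rank. It is therefore free, and the sequence $0\to\mathcal P\to E\to E/\mathcal P\to0$ splits over $\mathbb Z$. This gives $E=\mathcal P\oplus Q$ for some sublattice $Q$. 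Choose a $\mathbb Z$-basis $\mathcal B_Q$ of $Q$; then $\mathcal B_{\mathcal P}\cup\mathcal B_Q$ is a $\mathbb Z$-basis of $E$.

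Applying the isomorphisms $\theta_k$, each set $\theta_k(\mathcal B_{\mathcal P})\cup\theta_k(\mathcal B_Q)$ is a $\mathbb Z$-basis of $\theta_k(E)$. Because $W=\bigoplus_k\theta_k(E)$, the union over $k$ of these sets is a $\mathbb Z$-basis of $W$. Adjoining $R$ gives a $\mathbb Z$-basis of $\overline{(x-1)I_C}\otimes\bar I_C^\ast$, since the decomposition $[R)\oplus W$ is an equality of underlying abelian groups and the $y$-action is irrelevant here. This basis contains the defining basis of $W_f$, as required. A consequence to record is that the quotient by $W_f$ is then torsion free, which is what the surrounding argument needs.

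The only point needing care is the purity of $\mathcal P$ in $E$. It rests entirely on $\delta$ landing in the torsion-free module $\mathbb Z[C_3]$; strictly, we only need the image to be a sublattice, and surjectivity is not required for this step. Beyond that, I must check carefully that the indexing of the $e$'s matches the $\theta_k$ and that $R$ lies outside $W$; both follow from the stated decompositions.
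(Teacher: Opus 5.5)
Your proposal is correct and follows the paper's proof essentially step for step. The paper shows $E/\mathcal P$ is torsion free using $\chi(f)=(\lambda(\tau(f)),\lambda(\tau^2(f)))$, which is just $\delta|_E$ with its identically zero first coordinate dropped; it then extends $\mathcal B_{\mathcal P}$ by two elements $g_1,g_2$, transports the basis through the $\theta_k$ using $W=\bigoplus_k\theta_k(E)$, and adjoins the summand $[R)$, without using surjectivity of $\delta$, exactly as you observe.
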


\begin{proof}
We show this by first showing the defining $\mathbb Z$-basis of $W_f$ is contained in a
$\mathbb Z$-basis of $W$. Consider the homomorphism of abelian groups, $\chi:E\longrightarrow\mathbb{Z}^2$ defined by $\chi(f)=
\bigl(
\lambda(\tau(f)),
\lambda(\tau^2(f))
\bigr)$. Since $E=\ker(\lambda)$, we have $\ker(\chi)=\mathcal{P}$. It follows that $E/\mathcal{P}\cong Im(\chi)\subseteq\mathbb{Z}^2$. As every subgroup of a free abelian group is free abelian, $E/\mathcal P$ is torsion free.

Since $rank_{\mathbb{Z}}(E)=p-2$ and $rank_{\mathbb{Z}}(\mathcal{P})=p-4$, we may choose $g_1,g_2\in E$ such that
\[
\mathcal B_E
=
\mathcal{B}_{\mathcal{P}}\cup\{g_1,g_2\}
\]
is a $\mathbb{Z}$-basis of $E$, where $\mathcal{B}_{\mathcal{P}}
=
\left\{
\tau^j(f_i)
\;\middle|\;
1\leq i\leq d-1,\ 0\leq j\leq2
\right\}$.

For each $0\leq k\leq p-1$, the map $\theta_k:E\longrightarrow\theta_k(E)$ is an isomorphism of abelian groups. Consequently, $\theta_k(\mathcal{B}_E)$ is a $\mathbb{Z}$-basis of $\theta_k(E)$. Since $\theta_k(h)=(\nu-1)^2h\nu^k\otimes\nu^k$ for every $h\in E$, this basis is given explicitly by
\begin{eqnarray*}
\theta_k(\mathcal{B}_E)
={}&
\left\{
(\nu-1)^2\tau^j(f_i)\nu^k\otimes\nu^k
\;\middle|\;
1\leq i\leq d-1,\ 0\leq j\leq2
\right\}\\
&\cup
\left\{
(\nu-1)^2g_s\nu^k\otimes\nu^k
\;\middle|\;
s=1,2
\right\}.
\end{eqnarray*}
Moreover, $W=\bigoplus_{k=0}^{p-1}\theta_k(E)$. It follows that the union of these bases, as $k$ ranges from $0$
to $p-1$, is a $\mathbb Z$-basis of $W$. Thus, the following is a $\mathbb{Z}$-basis of $W$:
\begin{eqnarray*}
\mathcal{B}_W'
={}&
\left\{
(\nu-1)^2\tau^j(f_i)\nu^k\otimes\nu^k
\;\middle|\;
\begin{array}{c}
1\leq i\leq d-1,\quad 0\leq j\leq2,\\
0\leq k\leq p-1
\end{array}
\right\}\\
&\cup
\left\{
(\nu-1)^2g_s\nu^k\otimes\nu^k
\;\middle|\;
s=1,2,\quad 0\leq k\leq p-1
\right\}.
\end{eqnarray*}
The first set is precisely the defining basis of $W_f$. Thus, the basis of $W_f$ extends to a basis of $W$. By the $\Lambda_0$-decomposition obtained above,
\[
\overline{(x-1)I_C}\otimes\bar{I_C^\ast}
\cong_{\Lambda_0}
[R)\oplus W,
\]
the basis of $W$ itself extends to a $\mathbb{Z}$-basis of
$\overline{(x-1) I_C}\otimes\bar{I_C^\ast}$. The result follows.
\end{proof}

Let $\kappa:\overline{(x-1) I_C}\otimes\bar{I_C^\ast}
\longrightarrow
\left(
\overline{(x-1)I_C}\otimes\bar{I_C^\ast}
\right)/W_f$ be the natural surjection and set
\[
Y=
\left(
\overline{(x-1)I_C}\otimes\bar I_C^*
\right)/W_f.
\]
By the previous proposition, $Y$ is a lattice. Moreover, $rank_{\mathbb Z}(Y)= (p-1)^2-3p(d-1)=2p+1$.

\begin{propn}
\label{Iast_otimes_x-1I_iso_remainder_plus_free}
$\overline{(x-1)I_C}\otimes\bar{I_C^\ast}\cong_\Lambda Y\oplus\Lambda^{d-1}.$
\end{propn}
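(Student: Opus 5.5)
We will argue exactly as in the proof of Proposition \ref{I_otimes_I_iso_L_free}: exhibit a short exact sequence of $\Lambda$-lattices
\[
0\to W_f\to\overline{(x-1)I_C}\otimes\bar{I_C^\ast}\stackrel{\kappa}{\to} Y\to 0
\]
and show that it splits because its kernel is free.

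The first step is to check that $W_f$ is a genuine $\Lambda$-submodule of $\overline{(x-1)I_C}\otimes\bar{I_C^\ast}$. This follows from the preceding results.
\begin{itemize}
\item Each $W_i=W(f_i)+W(\tau(f_i))+W(\tau^2(f_i))$ is closed under $x$, since $x$ cyclically permutes the spanning elements $(\nu-1)^2f\nu^k\otimes\nu^k$ within each $W(f)$.
\item Each $W_i$ is closed under $y^{-1}$ by the lemma $W(f)y^{-1}=W(\tau(f))$. This lemma applies because $f_i,\tau(f_i),\tau^2(f_i)$ all lie in $\mathcal P=E\cap\tau(E)\cap\tau^2(E)$, which is $\tau$-invariant.
\end{itemize}
Hence $W_f=W_1\oplus\cdots\oplus W_{d-1}$ is a $\Lambda$-submodule, and $Y$ is a $\Lambda$-module with $\kappa$ a $\Lambda$-homomorphism.

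The second step is to see that $Y$ is a $\Lambda$-lattice. By the last proposition, the defining $\mathbb{Z}$-basis of $W_f$ extends to a $\mathbb{Z}$-basis of the whole tensor product. So $W_f$ is a $\mathbb{Z}$-direct summand and the quotient $Y$ is torsion free, of $\mathbb{Z}$-rank $2p+1$.

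The final step is to split the sequence. We have $W_f\cong_\Lambda\Lambda^{d-1}$. Within the category of $\Lambda$-lattices a module is projective if and only if it is injective (Theorem 28.5 of \cite{Johnson2003}), so $\Lambda^{d-1}$ is injective relative to lattice sequences. Concretely, $\mathrm{Ext}^1_\Lambda(Y,\Lambda)=0$ for a lattice $Y$: dualising converts the sequence into one whose surjection ends at the free module $\Lambda^{d-1}$, which splits, and dualising back splits the original. Therefore $\overline{(x-1)I_C}\otimes\bar{I_C^\ast}\cong_\Lambda Y\oplus\Lambda^{d-1}$.

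The only point needing care is the $\Lambda$-invariance of $W_f$ in the first step, since the earlier lemma is stated for $W(f)$ with $f,\tau(f)\in E$. This is why we would verify explicitly that $\mathcal P$ is $\tau$-stable. It follows from $\mathcal P=E\cap\tau(E)\cap\tau^2(E)$ together with $\tau^3=\mathrm{Id}$, and it is this stability that makes the lemma applicable to every generator of every $W_i$.
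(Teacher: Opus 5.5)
Your proposal is correct and matches the paper's proof: the same short exact sequence $0\to W_f\to\overline{(x-1)I_C}\otimes\bar{I_C^\ast}\to Y\to 0$ of $\Lambda$-lattices, split by dualising because $W_f^\ast\cong\Lambda^{d-1}$ is free, then dualising back. Your explicit check that $W_f$ is $\Lambda$-stable, via $\tau$-invariance of $\mathcal P$, is a detail the paper leaves implicit in its computation of the $y$-action on each $W_i$.
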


\begin{proof}
We have a short exact sequence of $\Lambda$-lattices $0\to W_f
\to\overline{(x-1)I_C}\otimes\bar{I}_C^\ast\overset{\kappa}{\longrightarrow} Y\to 0$. Dualising gives a split short exact sequence since $W_f^*\cong_\Lambda\Lambda^{d-1}$, and the result follows from dualising again.
\end{proof}

Finally, we show $Y^\ast\sim K(2)$. We would like to know if $Y\sim L^\ast$. However, at present this seems too great a computational task. Nevertheless, we will discuss to what extent this can be proven to be true in Section \ref{sec:theorem_B_part_2}.

\begin{propn}
\label{Yast_sim_K2}
$Y^\ast\sim K(2)$.
\end{propn}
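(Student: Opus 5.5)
We follow the template of Proposition \ref{K(1)_sim_L_dual}, with $R(2)=\overline{(x-1)I_C}$ in place of $R(1)$ and $\bar{I_C}$ tensored in. The goal is an exact sequence $0\to Y^\ast\to\Lambda\to R(2)\to 0$, after which Schanuel's Lemma against $\mathcal{S}(2)$ gives $Y^\ast\sim K(2)$.

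First we dualise Proposition \ref{Iast_otimes_x-1I_iso_remainder_plus_free}. Using (\ref{duality_of_Ri}) we have $\overline{(x-1)I_C}^\ast\cong R(2)$ and $\bar{I_C^\ast}^\ast\cong\bar{I}_C$. Since duality commutes with $\otimes_\mathbb{Z}$ on lattices, this gives
\[
\bar{I}_C\otimes\overline{(x-1)I_C}\cong Y^\ast\oplus\Lambda^{d-1}.
\]
Next we tensor the augmentation sequence $0\to\bar I_C\oplus[y-1)\to\Lambda\to\mathbb{Z}\to 0$ with $R(2)$. The proof of Proposition \ref{Ri_otimes_K__iso_Ri} gives $[y-1)\otimes R(2)\cong\Lambda^{2d}$, and Frobenius reciprocity gives $\Lambda\otimes R(2)\cong\Lambda^{3d}$ because $\mathrm{rk}_\mathbb{Z}R(2)=p-1=3d$. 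We therefore obtain
\[
0\to Y^\ast\oplus\Lambda^{3d-1}\to\Lambda^{3d}\to R(2)\to 0.
\]

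We then apply Johnson's destabilization theorem exactly as in the proof of Proposition \ref{K(1)_sim_L_dual}. The quotient $\Lambda^{3d}/\iota(\Lambda^{3d-1})$ is projective and stably free of rank one. Since $\Lambda$ has SFC, this quotient is $\Lambda$, and we get $0\to Y^\ast\to\Lambda\to R(2)\to 0$. Schanuel's Lemma against $\mathcal{S}(2)=(0\to K(2)\to\Lambda\to R(2)\to 0)$ then yields $Y^\ast\sim K(2)$.

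An alternative route avoids destabilization: stop at the unstabilized sequence $0\to Y^\ast\oplus\Lambda^{3d-1}\to\Lambda^{3d}\to R(2)\to 0$ and apply Schanuel directly. This gives $Y^\ast\oplus\Lambda^{3d-1}\oplus\Lambda\cong K(2)\oplus\Lambda^{3d}$, which is already a stable equivalence.

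The main obstacle is the first step. One must check that the identification $\bigl(\overline{(x-1)I_C}\otimes\bar{I_C^\ast}\bigr)^\ast\cong \bar I_C\otimes\overline{(x-1)I_C}$ is a genuine $\Lambda$-isomorphism under the involution conventions, including the order of the tensor factors, since $M\otimes N\cong N\otimes M$ via the swap map. One must also confirm that the split sequence defining $Y$ dualises to a split sequence with cokernel $Y^\ast$. A rank check supports the conclusion: $\mathrm{rk}(Y^\ast)=2p+1$, $\mathrm{rk}(K(2))=3p-(p-1)=2p+1$, and the sequence $0\to Y^\ast\to\Lambda\to R(2)\to0$ has ranks consistent with $3p=(2p+1)+(p-1)$.
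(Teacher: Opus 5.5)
Your proposal is correct and follows the paper's proof essentially step for step. You dualise Proposition~\ref{Iast_otimes_x-1I_iso_remainder_plus_free} to get $\bar I_C\otimes\overline{(x-1)I_C}\cong Y^\ast\oplus\Lambda^{d-1}$, tensor the augmentation sequence with $R(2)$, destabilize, and apply Schanuel against $\mathcal{S}(2)$. Your alternative of applying Schanuel directly to the unstabilized sequence is also valid: it already gives $Y^\ast\oplus\Lambda^{3d}\cong K(2)\oplus\Lambda^{3d}$, so destabilization is only needed for the sharper $Y^\ast\oplus\Lambda\cong K(2)\oplus\Lambda$.
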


\begin{proof}
Recall the decomposition, $I_G\cong\overline{I_C}\oplus[y-1)$ (Theorem 71.5 of \cite{Johnson2021}, p.230). It therefore follows that we have the exact sequence, $0\to \overline{I_C}\oplus[y-1)\to\Lambda\to\mathbb{Z}\to 0$. Next, we apply $-\otimes\overline{(x-1)I_C}$ and use Frobenius Reciprocity to obtain the exact sequence, $0 \to \overline{I_C}\otimes\overline{(x-1)I_C}\oplus\Lambda^{2d}\to\Lambda^{3d}\to\overline{(x-1)I_C}\to 0$. Using the dual form of Proposition \ref{Iast_otimes_x-1I_iso_remainder_plus_free} alongside Johnson's `destabilization theorem', we form the following exact sequence, $0 \to Y^\ast \to\Lambda\to\overline{(x-1)I_C}\to 0$. However, by definition, we also have $0 \to K(2) \to\Lambda\to\overline{(x-1)I_C}\to 0$. It therefore follows from Schanuel's Lemma that $K(2)\oplus\Lambda\cong Y^\ast\oplus\Lambda$.
\end{proof}

\begin{corollary}
$\overline{(x-1)I_C}\otimes\overline{I_C}\cong K(2)\oplus\Lambda^{d-1}$.
\end{corollary}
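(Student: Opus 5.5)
The plan is to combine Proposition \ref{Iast_otimes_x-1I_iso_remainder_plus_free} with Proposition \ref{Yast_sim_K2}, using the self-duality of the two tensor factors and the fork structure of $[K(2)]$.

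First I dualise the isomorphism of Proposition \ref{Iast_otimes_x-1I_iso_remainder_plus_free}. Since all modules involved are lattices, we have $(M\otimes N)^\ast\cong M^\ast\otimes N^\ast$ and $\Lambda^\ast\cong\Lambda$, so
\[
\overline{(x-1)I_C}^{\,\ast}\otimes\bar I_C\cong Y^\ast\oplus\Lambda^{d-1}.
\]
Here I use $(\bar{I_C^\ast})^\ast\cong\bar I_C$. By (\ref{R_i_iso_to_I_C}) and (\ref{duality_of_Ri}), $\overline{(x-1)I_C}\cong R(2)\cong R(2)^\ast$, so the left-hand side is isomorphic to $\overline{(x-1)I_C}\otimes\overline{I_C}$. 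It therefore suffices to show
\[
Y^\ast\oplus\Lambda^{d-1}\cong K(2)\oplus\Lambda^{d-1}.
\]

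Proposition \ref{Yast_sim_K2} gives the sharper statement $Y^\ast\oplus\Lambda\cong K(2)\oplus\Lambda$ from its proof via Schanuel's Lemma. Adding $\Lambda^{d-2}$ to both sides yields the required isomorphism. This step needs $d\geq 2$, which holds because $p=3d+1$ is prime, so $p\geq 7$.

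The only delicate point is that we cannot cancel down to $Y^\ast\cong K(2)$: the stable class $[K(2)]$ is a fork, and the bottom level is not known to contain a single isomorphism class (this is exactly the straightness issue discussed around Proposition \ref{even_syzygies_straightness}). The statement avoids this obstacle because it is formulated at level $d-1\geq 1$ above the minimal one. There the fork has a unique isomorphism class, and the argument above realises that uniqueness explicitly without needing any further structural input.
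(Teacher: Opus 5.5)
Your proposal is correct and follows essentially the same route as the paper. The paper leaves this corollary without a written proof, but its proof of Proposition \ref{Yast_sim_K2} uses exactly your dualised form of Proposition \ref{Iast_otimes_x-1I_iso_remainder_plus_free}, namely $\overline{I_C}\otimes\overline{(x-1)I_C}\cong Y^\ast\oplus\Lambda^{d-1}$; combining this with the Schanuel isomorphism $Y^\ast\oplus\Lambda\cong K(2)\oplus\Lambda$ and $d\geq 2$ gives the statement, as you argue.
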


\section{$R(1)\otimes Y\sim R(2)$ and the $\mathcal{D}(2)$-problem}
\label{sec:theorem_B_part_2}

\begin{propn}
\label{R(1)_otimes_Y_iso_R(2)}
$\bar{I}_C\otimes Y\cong \overline{(x-1)I_C}\oplus\Lambda^{2d}$.
\end{propn}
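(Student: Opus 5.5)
Mirror the argument of Proposition \ref{Theorem_B4_main_result}, with $L$ replaced by $Y$ and $K$ replaced by $K(1)$-type data. The first step is to produce a short exact sequence with $Y$ as a monogenic quotient. Dualising the sequence $0\to Y^\ast\to\Lambda\to\overline{(x-1)I_C}\to 0$ from the proof of Proposition \ref{Yast_sim_K2}, and using $R(2)^\ast\cong R(2)$ from (\ref{duality_of_Ri}), we obtain
\[0\to\overline{(x-1)I_C}\to\Lambda\to Y\to 0.\]
Applying the exact functor $\bar{I}_C\otimes-$, and using Frobenius reciprocity together with Proposition \ref{Ri_as_ZC3_mods} ($j^\ast(R(1))\cong\mathbb{Z}[C_3]^d$, so $\bar I_C\otimes\Lambda\cong\Lambda^{3d}$), gives
\[0\to\bar{I}_C\otimes\overline{(x-1)I_C}\to\Lambda^{3d}\to\bar{I}_C\otimes Y\to 0.\]

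Next substitute the first-row term. By the dual of Proposition \ref{Iast_otimes_x-1I_iso_remainder_plus_free}, recorded in the proof of Proposition \ref{Yast_sim_K2}, we have $\bar I_C\otimes\overline{(x-1)I_C}\cong Y^\ast\oplus\Lambda^{d-1}$. Johnson's destabilization theorem then strips off the free summand, exactly as in Proposition \ref{Theorem_F4_exact_sequence_1}, yielding
\[0\to Y^\ast\to\Lambda^{2d+1}\to\bar{I}_C\otimes Y\to 0.\]
This is the step I expect to need the most care. One must check that the image of $\Lambda^{d-1}$ splits off a free direct summand of $\Lambda^{3d}$, with quotient free of rank $2d+1$ rather than merely projective. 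As before, this uses the destabilization theorem followed by SFC for $\Lambda$.

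Now compare with $0\to Y^\ast\to\Lambda\to\overline{(x-1)I_C}\to0$. Both are sequences of lattices whose first term is $Y^\ast$ and whose middle terms are free, hence injective in the category of $\Lambda$-lattices. The dual form of Schanuel's Lemma therefore gives
\[(\bar I_C\otimes Y)\oplus\Lambda\cong\overline{(x-1)I_C}\oplus\Lambda^{2d+1}.\]
Alternatively, dualise both sequences and apply the ordinary Schanuel Lemma to the resulting projective resolutions of $Y$-duals.

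Finally, $[R(2)]$ is straight by Proposition \ref{Ri_are_straight}. Since $\bar I_C\otimes Y$ and $\overline{(x-1)I_C}\oplus\Lambda^{2d}$ lie in the same stable class and have equal $\mathbb{Z}$-rank, both being $(p-1)(2p+1)$, straightness forces them to be isomorphic, which is the claim.
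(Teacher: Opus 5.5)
Your argument is correct, but it is not the route the paper takes for this proposition. The paper uses no exact sequences here. It starts from $\bar I_C^\ast\otimes\overline{(x-1)I_C}\cong Y\oplus\Lambda^{d-1}$ (Proposition \ref{Iast_otimes_x-1I_iso_remainder_plus_free}) and tensors with $\bar I_C$. By associativity and Proposition \ref{R(1)_otimes_R(2)_is_K_oplus_free}, the left side becomes $(\bar I_C\otimes\bar I_C^\ast)\otimes\overline{(x-1)I_C}\cong(K\oplus\Lambda^{d-1})\otimes\overline{(x-1)I_C}$. Applying $K\otimes R(2)\cong R(2)\oplus\Lambda^{2d}$ (Proposition \ref{Ri_otimes_K__iso_Ri}) then gives $(\bar I_C\otimes Y)\oplus\Lambda^{3d(d-1)}\cong\overline{(x-1)I_C}\oplus\Lambda^{d(3d-1)}$, and straightness of $[R(2)]$ cancels the free part.

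So the paper's proof amounts to ``$R(1)\otimes R(3)\sim K$ acts as the identity''. Yours is instead a dimension-shifting argument, the exact $Y$-analogue of Propositions \ref{Theorem_F4_exact_sequence_1} and \ref{Theorem_B4_main_result}. It needs only three inputs: $\bar I_C\otimes\overline{(x-1)I_C}\cong Y^\ast\oplus\Lambda^{d-1}$, the sequence $0\to Y^\ast\to\Lambda\to\overline{(x-1)I_C}\to0$, and $R(2)^\ast\cong R(2)$. It therefore does not depend on the decomposition $\bar I_C\otimes\bar I_C^\ast\cong K\oplus\Lambda^{d-1}$ of Section \ref{sec:theorem_B_part_5}. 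It also produces the extra sequence $0\to Y^\ast\to\Lambda^{2d+1}\to\bar I_C\otimes Y\to0$. The cost is the destabilization and relative-injectivity step, which the paper's purely tensor-algebraic argument avoids; given what is already established, the paper's proof is shorter.

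One small correction to your opening sentence: in your mirror of the $L$-argument, the role of $K$ is played by $Y^\ast$ (which is $\sim K(2)$), not by $K(1)$-type data. Your actual steps use $Y^\ast$ correctly.
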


\begin{proof}
From Proposition \ref{Iast_otimes_x-1I_iso_remainder_plus_free}, we have $\bar{I}^\ast_C\otimes\overline{(x-1)I_C}\cong Y\oplus\Lambda^{d-1}$. By applying $-\otimes\bar{I}_C$ and using Proposition \ref{R(1)_otimes_R(2)_is_K_oplus_free} we see that the left hand side is isomorphic to $(K\otimes\overline{(x-1)I_C})\oplus\Lambda^{3d(d-1)}$. Using Proposition \ref{Ri_otimes_K__iso_Ri}, this becomes $\overline{(x-1)I_C}\oplus\Lambda^{d(3d-1)}$. Next, the right hand side is isomorphic to $(\bar{I}_C\otimes Y)\oplus\Lambda^{3d(d-1)}$. From Proposition \ref{Ri_are_straight}, we know $\overline{(x-1)I_C}$ is straight and so we can cancel the free modules above. This leaves us with $\bar{I}_C\otimes Y\cong \overline{(x-1)I_C}\oplus\Lambda^{2d}$.
\end{proof}

At this point, it is clear that the issue lies in $L$ and $Y$. Should we be able to show a relationship between these two modules (namely, one is stably isomorphic to the dual of the other), then we would be able to fully realise the interactions of the syzygy modules under tensor product. In particular, this would lead to a solution of the $\mathcal{D}(2)$-problem in the case of $G(p,\,3)$. Assuming for now that such a relationship exists, we can prove the following which is Theorem C of the Introduction.

\begin{propn}
\label{L_Y_then_D2}
If $L\sim Y^\ast$, then $G(p,\,3)$ has the $\mathcal{D}(2)$-property.
\end{propn}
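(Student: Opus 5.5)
The plan is to use the hypothesis $L\sim Y^\ast$ to show that $R(2)\oplus[y-1)$ is a genuine syzygy, i.e. lies in $\Omega_3(\mathbb{Z})$. Johnson's argument, recalled in Section \ref{sec:the_D2_problem}, then applies verbatim. It uses only $\widetilde{K_0}(\mathbb{Z}[C_3])=0$, the straightness of $R(2)\oplus[y-1)$ (Corollary II of \cite{Johnson2021}), a rank count making it the unique minimal element, and its realizability and fullness. Together these give the $\mathcal{D}(2)$-property. So the real work is to build an exact sequence
\[0\to R(2)\oplus[y-1)\to\Lambda^a\to\Lambda^b\to\Lambda^c\to\mathbb{Z}\to 0.\]

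First I would take the second syzygy. By the last proposition of Section \ref{sec:theorem_B_part_1} we have $L^\ast\oplus j_\ast(\mathbb{Z})\in\Omega_2(\mathbb{Z})$, witnessed by $0\to L^\ast\oplus j_\ast(\mathbb{Z})\to\Lambda^2\to I_G\to 0$ spliced onto $0\to I_G\to\Lambda\to\mathbb{Z}\to0$. Next I would resolve $L^\ast$ by a free module. The hypothesis $L\sim Y^\ast$ dualises to $L^\ast\sim Y$, so $L^\ast\oplus\Lambda^m\cong Y\oplus\Lambda^n$. The dual of the sequence in Proposition \ref{Yast_sim_K2} is $0\to R(2)^\ast\to\Lambda\to Y\to0$, and by (\ref{duality_of_Ri}) $R(2)^\ast\cong R(2)$, so this reads $0\to R(2)\to\Lambda\to Y\to 0$. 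Adding free summands gives $0\to R(2)\to\Lambda^{1+n}\to L^\ast\oplus\Lambda^m\to 0$.

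The $j_\ast(\mathbb{Z})$ summand is handled from the $C_3$ side. Inducing $0\to I_3\to\mathbb{Z}[C_3]\to\mathbb{Z}\to0$ along $j$ gives $0\to[y-1)\to\Lambda\to j_\ast(\mathbb{Z})\to 0$, using Proposition \ref{Iq_pushed_forward_by_j}. Taking direct sums produces a surjection $\Lambda^{2+n}\to L^\ast\oplus j_\ast(\mathbb{Z})\oplus\Lambda^m$ with kernel $R(2)\oplus[y-1)$. Splicing this with the stabilised two-step sequence $0\to L^\ast\oplus j_\ast(\mathbb{Z})\oplus\Lambda^m\to\Lambda^{2+m}\to\Lambda\to\mathbb{Z}\to0$ gives the required exact sequence. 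Hence $R(2)\oplus[y-1)\in\Omega_3(\mathbb{Z})$, and by straightness it is the minimal element.

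The main obstacle is the last step, confirming that Johnson's realizability and fullness results depend only on $R(2)\oplus[y-1)$ being the minimal element of $\Omega_3(\mathbb{Z})$, and not on $Inj(p,3)$ beyond establishing that membership. I would re-read Chapters 9–11 of \cite{Johnson2021} to isolate exactly where $Inj(p,3)$ enters. If fullness also needs it, I would instead try to derive fullness from the cancellation property (\ref{P_of_Tq_is_cancellation_semigroup}) together with the fibre-square description of $\Lambda$.
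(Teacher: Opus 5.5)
Your argument is correct. Its final step is the same as the paper's: once $R(2)\oplus[y-1)\in\Omega_3(\mathbb{Z})$ is established, the paper simply cites (105.6) of \cite{Johnson2021}. So re-reading Johnson to isolate where $Inj(p,3)$ enters is unnecessary, and so is the fallback via $\mathcal{P}(\mathcal{T}_3(A,\pi))$. The difference is in how you reach that membership, and your route is genuinely different.

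The paper tensors $0\to L^\ast\to\Lambda\to\bar{I}_C\to0$ with $\bar{I}_C$. It then combines $\bar{I}_C\otimes\bar{I}_C\cong L^\ast\oplus\Lambda^{d-1}$, Proposition \ref{R(1)_otimes_Y_iso_R(2)} (which rests on straightness of $[R(2)]$), the hypothesis, the fork structure and destabilization. This yields $0\to\overline{(x-1)I_C}\to\Lambda\to L^\ast\to0$. Splicing gives $R(2)\in\Omega_2(\bar{I}_C)$, and adding $[y-1)\in\Omega_2([y-1))$ lands in $\Omega_2(I_G)=\Omega_3(\mathbb{Z})$.

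You get the same sequence, up to free summands, just by dualising the sequence $0\to Y^\ast\to\Lambda\to\overline{(x-1)I_C}\to0$ of Proposition \ref{Yast_sim_K2} and using $R(2)^\ast\cong R(2)$. This shows that the identity $R(1)\otimes Y\cong R(2)\oplus\Lambda^{2d}$ is not needed for Theorem \ref{Theorem_3}; the hypothesis enters only through $Y\sim L^\ast$.

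On the $y$-strand, you are in effect using the paper's period-two sequence split into two halves. You import one half from the last proposition of Section \ref{sec:theorem_B_part_1}, whose proof in the paper is a fairly involved computation of $[y-1)\otimes[y-1)$. You can avoid that dependence by noting directly that $L^\ast\in\Omega_1(\bar{I}_C)$, and that inducing $0\to\mathbb{Z}\to\mathbb{Z}[C_3]\to I_3\to0$ along $j$ gives $j_\ast(\mathbb{Z})\in\Omega_1([y-1))$.

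The paper's route, for its part, presents this step as an instance of tensoring with $R(1)$. That is the same mechanism it then iterates to build the six-periodic resolution.
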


\begin{proof}
First, note that each $[K(i)]$ has the structure of a fork (see \cite{Johnson2003}). It follows that $Y^\ast\oplus\Lambda\cong L\oplus\Lambda$, and since $d\geq 2$, we can always replace $Y^\ast\oplus\Lambda^{d-1}$ by $L\oplus\Lambda^{d-1}$ in our exact sequences. Recall, in the proof of Proposition \ref{K(1)_sim_L_dual} we showed the existence of,
\begin{equation}
\label{D2_exact_seq_1}
0\to L^\ast\to \Lambda \to \bar{I_C} \to 0.
\end{equation}
We now apply $-\otimes\bar{I_C}$. Using Propositions \ref{IC_otimes_IC_iso_L_plus_free} and \ref{R(1)_otimes_Y_iso_R(2)}, Johnson's `destabilization theorem', and the assumption that $Y\sim L^\ast$, we end up with the following exact sequence:
\begin{equation}
\label{D2_exact_seq_2}
0 \to \overline{(x-1)I_C} \to \Lambda \to L^\ast \to 0.
\end{equation}
Splicing together (\ref{D2_exact_seq_1}) and (\ref{D2_exact_seq_2}) results in the following:
\begin{equation}
\label{Basic_seq_1}
\begin{tikzpicture}
\node at (-11.75,0){$0$};
\draw[thin,->](-11.5,0)--(-11,0)node[anchor=west]{$\overline{(x-1)I_C}$};
\draw[thin,->](-9,0)--(-8.5,0)node[anchor=west]{$\Lambda$};
\draw[thin,->](-8.1,0)--(-7.6,0)node[anchor=west]{$\Lambda$};
\draw[thin,->](-7,0)--(-6.5,0)node[anchor=west]{$\bar{I}_C$};
\draw[thin,->](-5.75,0)--(-5.25,0)node[anchor=west]{$0$.};

\node at (-7.75,1.1){$L^\ast$};
\draw[thin,->](-8.1,.3)--(-7.9,.8);
\draw[thin,->](-7.7,.8)--(-7.5,.3);
\end{tikzpicture}
\end{equation}
It therefore follows that $\overline{(x-1)I_C}\in\Omega_2(\bar{I}_C)$. From the exact sequence $0\to[y-1)\to\Lambda\to\Lambda\to[y-1)\to 0$, it follows that $[y-1)\in\Omega_2([y-1))$. Thus, $\overline{(x-1)I_C}\oplus[y-1)\in\Omega_2(\bar{I}_C\oplus[y-1))=\Omega_3(\mathbb{Z})$. The result now follows from (105.6) of \cite{Johnson2021}.
\end{proof}

\noindent We can in fact extend this argument to construct a free resolution of $\bar{I_C}$ with period 6.

\begin{propn}
If $L\sim Y^\ast$, then there exists a free resolution of $\bar{I}_C$ of period 6:
\begin{equation*}
\begin{tikzpicture}
\node at (-12.45,0){$0$};
\draw[thin,->](-12.2,0)--(-11.7,0)node[anchor=west]{$\bar{I}_C$};
\draw[thin,->](-11,0)--(-10.5,0)node[anchor=west]{$\Lambda$};
\draw[thin,->](-10,0)--(-9.5,0)node[anchor=west]{$\Lambda$};
\draw[thin,->](-9,0)--(-8.5,0)node[anchor=west]{$\Lambda$};
\draw[thin,->](-8,0)--(-7.5,0)node[anchor=west]{$\Lambda$};
\draw[thin,->](-7,0)--(-6.5,0)node[anchor=west]{$\Lambda$};
\draw[thin,->](-6,0)--(-5.5,0)node[anchor=west]{$\Lambda$};
\draw[thin,->](-5,0)--(-4.5,0)node[anchor=west]{$\bar{I}_C$};
\draw[thin,->](-3.8,0)--(-3.3,0)node[anchor=west]{$0$};

\node at (-5.7,1.1){$L^\ast$};
\draw[thin,->](-6.1,.3)--(-5.8,.8);
\draw[thin,->](-5.65,.8)--(-5.35,.3);

\node at (-7.7,1.1){$L$};
\draw[thin,->](-8.1,.3)--(-7.8,.8);
\draw[thin,->](-7.65,.8)--(-7.35,.3);

\node at (-9.7,1.1){$K$};
\draw[thin,->](-10.1,.3)--(-9.8,.8);
\draw[thin,->](-9.65,.8)--(-9.35,.3);

\node at (-6.7,-1.2){$\overline{(x-1)I_C}$};
\draw[thin,->](-7.1,-.3)--(-6.8,-.8);
\draw[thin,->](-6.65,-.8)--(-6.35,-.3);

\node at (-8.7,-1.2){$\bar{I^\ast_C}$};
\draw[thin,->](-9.1,-.3)--(-8.8,-.8);
\draw[thin,->](-8.65,-.8)--(-8.35,-.3);
\end{tikzpicture}
\end{equation*}
\end{propn}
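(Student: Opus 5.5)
We need six short exact sequences, each with middle term $\Lambda$, which splice together into the displayed resolution:
\[
0\to L^\ast\to\Lambda\to\bar I_C\to0,\qquad
0\to\overline{(x-1)I_C}\to\Lambda\to L^\ast\to0,\qquad
0\to L\to\Lambda\to\overline{(x-1)I_C}\to0,
\]
\[
0\to\bar{I_C^\ast}\to\Lambda\to L\to0,\qquad
0\to K\to\Lambda\to\bar{I_C^\ast}\to0,\qquad
0\to\bar I_C\to\Lambda\to K\to0.
\]
Reading from the right-hand end, these produce the kernels $L^\ast$, $\overline{(x-1)I_C}$, $L$, $\bar{I_C^\ast}$, $K$ and finally $\bar I_C$ again. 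The first two sequences are \eqref{D2_exact_seq_1} and \eqref{D2_exact_seq_2}, which were already produced in the proof of Proposition \ref{L_Y_then_D2} under the hypothesis $L\sim Y^\ast$. The fourth sequence, $0\to\bar{I_C^\ast}\to\Lambda\to L\to0$, was used at the start of the proof of Proposition \ref{Theorem_F4_exact_sequence_1}; it is the dual of \eqref{D2_exact_seq_1}, since $\bar I_C^\ast\cong\bar{I_C^\ast}$. The fifth sequence, $0\to K\to\Lambda\to\bar{I_C^\ast}\to0$, is Proposition \ref{Lambda_quotient_K_is_R3}.

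Next I will dualise \eqref{D2_exact_seq_2}. Duality is exact on lattices, and $\overline{(x-1)I_C}^\ast\cong R(2)^\ast\cong R(2)$ by \eqref{duality_of_Ri}. This gives the third sequence, $0\to L\to\Lambda\to\overline{(x-1)I_C}\to0$. As a sanity check, $\mathrm{rk}\,L=3p-(p-1)=2p+1$ and $\mathrm{rk}\,\overline{(x-1)I_C}=p-1$, so the ranks add up to $3p$.

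The one remaining piece is the last sequence, $0\to\bar I_C\to\Lambda\to K\to0$. The paper cites Johnson (pp.~230--232) for a sequence $0\to R(1)\to\Lambda\to K(3)\to0$. To pass from $K(3)$ to $K$ itself, I plan to dualise the fifth sequence $0\to K\to\Lambda\to\bar{I_C^\ast}\to0$. This gives $0\to\bar I_C\to\Lambda\to K^\ast\to0$, so it suffices to show $K\cong K^\ast$. We know $K\sim K^\ast$ from \eqref{K_stably_self_dual}, and the two have equal rank $2p+1$. However, $[K]=[K(3)]$ is a fork, so stable equivalence gives isomorphism only above the minimal level, and $K$ lies at the minimal level. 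I therefore expect this to be the main obstacle.

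I would first try to prove $K\cong K^\ast$ directly. The dual basis of $\mathcal E$ from Proposition \ref{Z_basis_for_K_q_is_3} should do this: write down $\rho_{K^\ast}(g)=\rho_K(g^{-1})^T$ from the corollaries giving $\rho_K(x^{-1})$ and $\rho_K(y)$, then look for an explicit intertwining matrix built from the blocks $\Psi$ and $\Phi^T$, as in Proposition \ref{S_plus_U_alpha_is_K}. If that fails, a fallback is to use $0\to L\to\Lambda\to\overline{(x-1)I_C}\to0$ together with Proposition \ref{Ri_otimes_K__iso_Ri} to realise a monogenic quotient $\Lambda\to K$ with kernel $\bar I_C$. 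Alternatively, Johnson's $0\to R(1)\to\Lambda\to K(3)\to0$ already gives a six-term periodic resolution if the last term $K$ is replaced by $K(3)$ and the fifth sequence is adjusted by Schanuel's Lemma. For that route I would still check, via the destabilization theorem and SFC for $\Lambda$, that every middle term remains a single copy of $\Lambda$. Once the six sequences are in hand, splicing them gives the period-6 resolution of $\bar I_C$.
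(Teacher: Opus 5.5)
Your first five sequences are correct. Obtaining $0\to\bar{I_C^\ast}\to\Lambda\to L\to0$ by dualising \eqref{D2_exact_seq_1} is cleaner than re-deriving it, and it does not even use $L\sim Y^\ast$.

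The gap is the sixth sequence, $0\to\bar I_C\to\Lambda\to K\to0$, which you leave unresolved. Dualising $0\to K\to\Lambda\to\bar{I_C^\ast}\to0$ can only give $K^\ast$ as the cokernel, so your main route depends entirely on $K\cong K^\ast$. Nothing available gives this. \eqref{K_stably_self_dual} is only a stable statement. At the bottom of the fork $[K]$, passing from $K\oplus\Lambda\cong K^\ast\oplus\Lambda$ to $K\cong K^\ast$ is a genuine cancellation problem, of exactly the kind the paper leaves open for $[K(1)^\ast]$. Searching for an explicit intertwiner is therefore a hope, not a step.

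Your last alternative has the same defect in another form. Schanuel's lemma only yields $K\oplus\Lambda\cong K(3)\oplus\Lambda$. So at best you get a resolution through Johnson's $K(3)$, and only if that module is literally the kernel of some surjection $\Lambda\to R(3)$. That is not the displayed resolution through $K$.

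The missing idea is to tensor instead of dualise. Apply $-\otimes\bar I_C$ to $0\to K\to\Lambda\to\bar{I_C^\ast}\to0$:
\begin{itemize}
\item by Proposition \ref{Ri_otimes_K__iso_Ri}, the left term is $\bar I_C\oplus\Lambda^{2d}$;
\item the middle term is $\Lambda^{p-1}=\Lambda^{3d}$;
\item by Proposition \ref{R(1)_otimes_R(2)_is_K_oplus_free}, the right term is $K\oplus\Lambda^{d-1}$.
\end{itemize}
Here $K$ appears on the nose, because that decomposition came from the explicit isomorphism $S+U_\alpha\cong K$ of Proposition \ref{S_plus_U_alpha_is_K}.

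From $0\to\bar I_C\oplus\Lambda^{2d}\to\Lambda^{3d}\to K\oplus\Lambda^{d-1}\to0$, first split off $\Lambda^{d-1}$ on the right. Then split off $\Lambda^{2d}$ on the left, using injectivity of $\Lambda$ among lattices together with SFC, i.e.\ the destabilization argument. This gives $0\to\bar I_C\to\Lambda\to K\to0$, which is how the paper closes the loop.

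Your second fallback names Proposition \ref{Ri_otimes_K__iso_Ri}, which is half of this argument. However, it attaches it to the wrong sequence and misses Proposition \ref{R(1)_otimes_R(2)_is_K_oplus_free}. That proposition is what produces $K$ itself rather than a module merely stably equivalent to it.
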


\begin{proof}
In the above proof we constructed the first section of the free resolution. We now have two more sections to construct. We start with $0\to L \to \Lambda \to \overline{(x-1)I_C} \to 0$ which is the dual of (\ref{D2_exact_seq_2}). We apply $-\otimes \bar{I_C}$ and Johnson's `destabilization theorem' to obtain the exact sequence, $0 \to \bar{I_C^\ast} \to \Lambda \to L \to 0$. Here, we have used Proposition \ref{Theorem_B4_main_result}, the dual of Proposition \ref{Iast_otimes_x-1I_iso_remainder_plus_free}, along with the assumption that $Y^\ast\sim L$. Splicing these two exact sequences together yields the following:
\begin{equation}
\begin{tikzpicture}
\label{Basic_seq_2}
\node at (-11.75,0){$0$};
\draw[thin,->](-11.5,0)--(-11,0)node[anchor=west]{$\bar{I_C^\ast}$};
\draw[thin,->](-10.3,0)--(-9.8,0)node[anchor=west]{$\Lambda$};
\draw[thin,->](-9.3,0)--(-8.8,0)node[anchor=west]{$\Lambda$};
\draw[thin,->](-8.3,0)--(-7.8,0)node[anchor=west]{$\overline{(x-1)I_C}$};
\draw[thin,->](-5.75,0)--(-5.25,0)node[anchor=west]{$0$.};
\node at (-9,1.1){$L$};
\draw[thin,->](-9.4,.3)--(-9.2,.8);
\draw[thin,->](-8.9,.8)--(-8.7,.3);
\end{tikzpicture}
\end{equation}
Finally, recall that we constructed the exact sequence $0 \to K \to \Lambda \to \bar{I_C^\ast} \to 0$ in Section \ref{sec:K_as_the_id}. For a final time we apply $-\otimes\bar{I_C}$ and make use of Propositions \ref{Ri_otimes_K__iso_Ri} and \ref{R(1)_otimes_R(2)_is_K_oplus_free}, and Johnson's `destabilization theorem' to yield $0 \to \bar{I_C} \to \Lambda \to K \to 0$. Splicing together the previous two exact sequences yields
\begin{equation}
\begin{tikzpicture}
\label{Basic_seq_3}
\node at (-10.5,0){$0$};
\draw[thin,->](-10.25,0)--(-9.75,0)node[anchor=west]{$\bar{I_C}$};
\draw[thin,->](-9,0)--(-8.5,0)node[anchor=west]{$\Lambda$};
\draw[thin,->](-8.1,0)--(-7.6,0)node[anchor=west]{$\Lambda$};
\draw[thin,->](-7,0)--(-6.5,0)node[anchor=west]{$\bar{I_C^\ast}$};
\draw[thin,->](-5.75,0)--(-5.25,0)node[anchor=west]{$0$.};

\node at (-7.75,1.1){$K$};
\draw[thin,->](-8.1,.3)--(-7.9,.8);
\draw[thin,->](-7.7,.8)--(-7.5,.3);
\end{tikzpicture}
\end{equation}
The result now follows by splicing (\ref{Basic_seq_1}), (\ref{Basic_seq_2}) and (\ref{Basic_seq_3}).
\end{proof}

We conclude with a discussion of when $L\sim Y^\ast$. In (74.10) of \cite{Johnson2021} the exact sequence $0 \to R(3) \to P(2) \to \Lambda \to R(2) \to 0$ was constructed in which $P(2)$ is projective of rank 1. In Theorem 104.3, it was then shown that if $Inj(p,\,3)$ holds then $[P(2)]\in\mathcal{LF}(\Lambda)$. Furthermore, in this case (cf. Proposition 103.1), it follows that we have an exact sequence $0\to R(3)\to\Lambda\to K(2)\to 0$. Of course, we have shown that $K(2)\sim Y^\ast$ and we have already constructed the exact sequence $0\to \bar{I_C^\ast} \to \Lambda \to L \to 0$. It now follows from the dual form of Schanuel's Lemma that $L\sim K(2)\sim Y^\ast$. We therefore have the following:
\begin{equation}
\text{If }Inj(p,3)\text{ holds, then }L\sim Y^\ast.
\end{equation}

Recall, Johnson showed that the $\mathcal{D}(2)$ property holds when $Inj(p,3)$ holds. The issue is the difficulty in checking this property. Our results provide an alternative method that is significantly more straightforward to compute. We therefore conclude by describing a concrete procedure to verify if $L\sim Y^\ast$. From Proposition \ref{L_Y_then_D2}, this then demonstrates the $\mathcal{D}(2)$-property.

Since both $L$ and $Y$ have been constructed explicitly as quotient lattices, this condition can be investigated using integral representations of the generators $x$ and $y$ of $G(p,3)$. The procedure is as follows:
\begin{enumerate}
\item Using the quotient basis implicit in Section \ref{sec:theorem_B_part_1}, construct a $\mathbb{Z}$-basis $\mathcal{B}_{L}$ of $L$. With respect to this basis, calculate the matrices $A_L=\rho_L(x)$, $B_L=\rho_L(y)$.
\item Recall the $\tau$-invariant sublattice $\mathcal{P}\subseteq E$ defined in Section~\ref{sec:theorem_B_part_3}. Choose a $\mathbb{Z}[C_3]$-basis $\{f_1,\ldots,f_{d-1}\}$ of $\mathcal{P}$. Equivalently, $\mathcal{B}_{\mathcal{P}} = \bigl\{ f_i,\tau(f_i),\tau^2(f_i) \ \big|\ 1\leq i\leq d-1 \bigr\}$ is a $\mathbb{Z}$-basis of $\mathcal{P}$. Choose elements $g_1,g_2\in E$ such that $\mathcal{B}_{E} = \mathcal{B}_{\mathcal{P}}\cup\{g_1,g_2\}$ is a $\mathbb{Z}$-basis of $E$. Notice that $\left|\mathcal{B}_{E}\right| = p-2 = rank_{\mathbb{Z}}(E)$, as required.
\item Use $\mathcal{B}_{E}$ in the quotient construction of $Y$ from Section \ref{sec:theorem_B_part_3}. This produces a quotient basis $\mathcal{B}_{Y}$ and hence integral matrices $A_Y=\rho_Y(x)$ and $B_Y=\rho_Y(y)$. The corresponding representation of the dual lattice $Y^{\ast}$ is given by $\rho_{Y^{\ast}}(g) = \rho_Y(g^{-1})^{\mathsf{T}}$ for every $g\in G(p,3)$. In particular, since $A_Y$ and $B_Y$ are unimodular, $A_{Y^{\ast}} = \rho_{Y^{\ast}}(x) = A_Y^{-\mathsf{T}}$ and $B_{Y^{\ast}} = \rho_{Y^{\ast}}(y) = B_Y^{-\mathsf{T}}$.
\item Determine the integral intertwiners between the representations $\rho_L$ and $\rho_{Y^{\ast}}$. Thus, solve the simultaneous equations
\[
XA_L=A_{Y^{\ast}}X, \qquad XB_L=B_{Y^{\ast}}X 
\] 
for $X\in\operatorname{Mat}_{2p+1}(\mathbb{Z})$. If this system admits a solution satisfying $\det(X)=\pm1$, then $X\in GL_{2p+1}(\mathbb{Z})$ and defines an isomorphism of $\Lambda$-lattices $L\cong Y^{\ast}$.

If the direct test is unsuccessful, apply the same procedure to the stabilised lattices $L\oplus\Lambda$ and $Y^\ast\oplus\Lambda$. If $\rho_{\Lambda}$ denotes the regular integral representation of $G(p,3)$, one seeks a matrix $\tilde{X}\in GL_{5p+1}(\mathbb{Z})$ which simultaneously intertwines $\rho_L\oplus\rho_{\Lambda}$ and $\rho_{Y^{\ast}}\oplus\rho_{\Lambda}$. Such a matrix establishes $L\oplus\Lambda \cong Y^{\ast}\oplus\Lambda$ and hence $L\sim Y^{\ast}$. Thus, failure of the direct test $L\cong Y^{\ast}$ does not by itself show that the required stable isomorphism fails.
\end{enumerate}

As a final comment, we recall that it is known that Johnson's condition $Inj(p,3)$ fails when $p=229$ (it holds when $p\leq 227$). Since $Inj(p,3)\:\Rightarrow\:L\sim Y^\ast$, the first genuine case of interest would therefore be $p=229$. An interesting, if computationally monstrous, question is therefore posed:
\medskip

\noindent\textbf{Question:} Does $L\sim Y^\ast$ hold when $p=229$?
\medskip

\noindent At present, the authors are unaware of an answer to this question.

\section*{Statements and Declarations}
\noindent\textbf{Funding.}
This work was supported in part by a research grant from the
London Mathematical Society.

\medskip
\noindent\textbf{Competing interests.}
The authors have no relevant financial or non-financial interests
to disclose.

\medskip
\noindent\textbf{Data availability.}
No datasets were generated or analysed during the current study.

\medskip
\noindent\textbf{Author contributions.}
The first-named author worked on all sections of this paper. The second-named author's contributions lie in Section \ref{sec:theorem_B_part_3}.

\bibliography{Syzygies_ref}

@book{CurtisReiner1990,
   author = {C W Curtis and I Reiner},
   isbn = {0471523674},
   pages = {848},
   publisher = {Wiley-Interscience, New York},
   title = {Methods of Representation Theory, Volume 1},
   url = {https://books.google.co.uk/books/about/Methods_of_Representation_Theory.html?id=HWm_QgAACAAJ&pgis=1},
   year = {1990}
}

@article{DyerSieradski1973,
   author = {M N Dyer and A J Sieradski},
   doi = {10.1007/BF02566109},
   issn = {00102571},
   issue = {1},
   journal = {Commentarii Mathematici Helvetici},
   pages = {31-44},
   title = {Trees of homotopy types of two-dimensional CW-complexes},
   volume = {48},
   year = {1973}
}

@book{Johnson2003,
   author = {F E A Johnson},
   isbn = {0521537495},
   pages = {267},
   publisher = {Cambridge University Press, Cambridge},
   title = {Stable Modules and the D(2)-Problem},
   url = {https://books.google.com/books?id=7NBl367UTGcC&pgis=1},
   year = {2003}
}

@article{Johnson2015,
   author = {F E A Johnson},
   journal = {Communications in Algebra},
   pages = {2034-2047},
   title = {Syzygies and diagonal resolutions for dihedral groups},
   volume = {44},
   year = {2016}
}

@book{Johnson2021,
   author = {F E A Johnson},
   isbn = {978-981-122-275-7},
   pages = {357},
   publisher = {World Scientific},
   title = {Metacyclic Groups and the D(2) Problem},
   year = {2021}
}

@article{Wall1965,
   author = {C.T.C. Wall},
   journal = {Ann. of Math.},
   pages = {56-69},
   title = {Finiteness conditions for CW Complexes},
   volume = {81},
   year = {1965}
}

@article{Cassou-Nogues1973,
   author = {P Cassou-Nogués},
   journal = {Comptes Rendus Acad Sci Paris},
   pages = {A973–A975},
   title = {Classes d'ideaux de l'algebre d'un groupe abelian},
   volume = {276},
   year = {1973}
}

@article{Evans2021,
   author = {J D P Evans},
   issue = {6},
   journal = {Comm. in Algebra},
   pages = {2606-2622},
   title = {Syzygy Modules for Dihedral Groups},
   volume = {49},
   year = {2021}
}

@phdthesis{Evans2018,
   author = {J D P Evans},
   school = {University College London},
   title = {Group algebras of metacyclic type},
   year = {2018}
}

@article{Hambleton2019,
   author = {I. Hambleton},
   doi = {10.1017/S0305004118000385},
   issn = {0305-0041},
   issue = {02},
   journal = {Mathematical Proceedings of the Cambridge Philosophical Society},
   month = {9},
   pages = {361-368},
   title = {Two remarks on Wall's D2 problem},
   volume = {167},
   year = {2019}
}

@article{EilenbergGanea1957,
   author = {S. Eilenberg and T. Ganea},
   issue = {3},
   journal = {Annals of Mathematics},
   pages = {517-518},
   title = {On the Lusternik-Schnirelmann Category of Abstract Groups},
   volume = {65},
   year = {1957}
}

@article{BestvinaBrady1997,
   author = {Mladen Bestvina and Noel Brady},
   doi = {10.1007/s002220050168},
   issn = {0020-9910},
   issue = {3},
   journal = {Inventiones Mathematicae},
   month = {8},
   pages = {445-470},
   title = {Morse theory and finiteness properties of groups},
   volume = {129},
   year = {1997}
}
\bibliographystyle{plain}

\end{document}